\colorlet{texcscolor}{blue!50!black}
\colorlet{texemcolor}{red!70!black}
\colorlet{texpreamble}{red!70!black}
\colorlet{codebackground}{black!25!white!25}
\numberwithin{equation}{section}
\newtheorem{theorem}{Theorem}[section]
\newtheorem{remark}[theorem]{Remark}
\begin{document}

\title{Micro-macro decomposition based asymptotic-preserving numerical schemes and numerical moments conservation for collisional nonlinear kinetic equations 
 \footnote{The first and the third author was supported by the funding DOE--Simulation Center for Runaway Electron Avoidance and Mitigation, 
project No. DE-SC0016283. The second author was supported by NSF grants DMS-1522184 and DMS-1107291: RNMS KI-Net, and NSFC grants No. 31571071 and No. 11871297. 
}}

\date{}
\author{Irene M. Gamba \footnote{Department of Mathematics and The Institute for Computational Engineering and Sciences (ICES), University of Texas at Austin, Austin, TX 78712, USA (gamba@math.utexas.edu).}, 
Shi Jin \footnote{School of Mathematical Sciences, Institute of Natural Sciences,
MOE-LSC and SHL-MAC, Shanghai Jiao Tong University, Shanghai, China (shijin-m@sjtu.edu.cn).},
and Liu Liu\footnote{The Institute for Computational Engineering and Sciences (ICES), University of Texas at Austin, Austin, TX 78712, USA (lliu@ices.utexas.edu). }}
\maketitle

\abstract{
In this paper, we first extend the micro-macro decomposition method for multiscale kinetic equations from the BGK model to general collisional kinetic equations, including the Boltzmann and the Fokker-Planck Landau equations. 
The main idea is to use a relation between the (numerically stiff) linearized collision operator with the nonlinear quadratic ones, the latter's stiffness can be overcome using the BGK penalization method of Filbet and Jin for the Boltzmann, or the linear Fokker-Planck penalization method of Jin and Yan for the Fokker-Planck Landau equations. Such a scheme allows the computation of 
multiscale collisional kinetic equations efficiently in all regimes, including the fluid regime in which the fluid dynamic behavior can be correctly computed even without resolving the small Knudsen number. A distinguished feature of these schemes is that although they contain implicit terms, they can be implemented {\it explicitly}.
These schemes preserve the moments (mass, momentum and energy) {\it exactly} thanks to the use of the macroscopic system which is naturally in a conservative form. We further utilize this conservation property for more general kinetic systems, 
using the Vlasov-Amp\`{e}re and Vlasov-Amp\`{e}re-Boltzmann systems as examples. 
The main idea is to evolve both the kinetic equation for the probability density distribution and the moment system, the later naturally induces a scheme that conserves exactly the moments numerically if they are physically conserved. 
}

{\bf keywords: } Boltzmann equation, Landau equation, micro-macro decomposition, asymptotic preserving scheme, conservative scheme,
Vlasov-Amp\`{e}re-Boltzmann

\section{Introduction}

The Boltzmann equation and the Fokker-Planck-Landau equation are among the most important
kinetic equations, arising in describing the dynamics of probability density
distribution of particles in rarified gas and plasma, respectively. 
One of the main computational challenges for these kinetic equations is that the problem may often encounter multiple time and spatial scales, 
characterized by the Knudsen number (denoted by $\varepsilon$), the dimensionless mean free
path, that may vary in orders of magnitude in the computational domain,
covering the regimes from fluid, transition, rarefied to even free streaming
regimes. Asymptotic-Preserving (AP) schemes, which mimics the asymptotic
transition from one scale to another at the discrete level, have been shown to be an effective
computational paradigm in the last two decades \cite{jin1999efficient,
jin2010asymptotic}.  Such schemes allow efficient numerical approximations
in {\it all} regimes, and coarse mesh and large time steps can be used even
in the fluid dynamic regime, without numerically resolving the small
Knudsen number. For the space inhomogeneous Boltzmann equation, AP schemes
were first designed using BGK-operator based penalty \cite{Filbet-Jin}.
Other approaches include the exponential
integrator based methods \cite{dimarco2011exponential, li2014exponential},
or micro-macro (MM) decomposition \cite{MM-Lemou}. We also mention relevant
works \cite{xu2010unified, liu2016unified, Luc}. One should note that
\cite{MM-Lemou, xu2010unified} only dealt with the BGK model, rather than
the full Boltzmann equation. For AP schemes to deal with the stiff Landau collision operator, 
the BGK-penalization method was extended to the Fokker-Planck-Landau
equation in \cite{JinYan}, using the linear Fokker-Planck operator as the
penalty.

The aim of this paper is not on the comparison of all these different approaches,
rather we will focus on the micro-macro decomposition method, which was
formulated in \cite{MM-Lemou} for the Boltzmann but numerically realized only for the BGK model. 
One of the difficulties in this formulation is that one encounters a stiff linearized collision operator whose inversion could be computationally inefficient.
In \cite{Lemou-Note}, a linear penalty for the microscopic deviation equation was used to remove the stiffness. 
This idea is suitable for the Boltzmann equation but not for the Fokker-Planck-Landau equation which has second derivative terms 
in the collision operators.
One of the goals of the current paper is to show how the micro-macro decomposition method in \cite{MM-Lemou}
can be extended to the general collision operators,  
include the Boltzmann and Landau collision operators.
Having its theoretical origin in \cite{liu2004boltzmann} (see also \cite{liu2006nonlinear}),
the micro-macro decomposition has also found its advantage in designing
AP schemes for radiative heat transfer \cite{klar2001numerical}, linear
transport equation \cite{Lemou-BC}, among others. 
For the Boltzmann or the Fokker-Planck-Landau equation, 
the MM method is based on a decomposition of the kinetic equation under study 
into a coupled system composed of a kinetic equation on the microscopic part and a fluid equation on the macroscopic part. 
By using an implicit-explicit (IMEX) temporal discretization, it naturally leads to an AP scheme at the level of the compressible Navier-Stokes asymptotics 
\cite{MM-Lemou}. Moreover, the MM formulation guarantees the preservation of numerical moments (mass, momentum and energy) {\it exactly}
thanks to the macroscopic system which is naturally in a conservative form.
Another advantage of the MM approach is that one can obtain good uniform numerical
stability result \cite{liu2010analysis, LFY-DG}. 

Our main idea for the MM method is the usage of a simple relation between a linearized
collision operator (a numerically stiff term) and the quadratically
nonlinear collision operator. For the latter (stiff) nonlinear collision operators, we then use the BGK-penalty
method of Filbet-Jin \cite{Filbet-Jin} for the Boltzmann collision or
the Fokker-Planck penalty of Jin-Yan \cite{JinYan} for the Fokker-Planck-Landau collision.
This allows us to extend the MM method of \cite{MM-Lemou} from the BGK
model to the more physical Boltzmann and Fokker-Planck-Landau equations in a rather simple fashion.

We would like to emphasize that in the MM formalism (as well as in the
penalty methods in \cite{Filbet-Jin, JinYan}), 
one needs to solve the macroscopic system, which is in a conservation
form, giving rise the conservation of mass, momentum and total energy. 
When discretizing the macroscopic system with a standard spatially conservative 
scheme, these physically conserved quantities are naturally conserved
numerically. This is not the case if one uses the microscopic equation for
the particle density distribution $f$ and then takes moments from the discrete $f$,
since many collision solvers, for example the spectral methods
\cite{gamba2017fast, pareschi2000numerical,gamba2009spectral, mouhot2006fast}, do not
have the {\it exact} conservation properties, and extra efforts are needed for
the exact conservation, see \cite{mieussens2000discrete, zhang2017conservative, gamba2014conservative}. 
The advantage of the conservation of moments made from the {\it macro} system was noted and emphasized in
\cite{JinYan}. 

Numerically conserving the physically conserved quantities is a highly desirable property for a numerical scheme.
In this paper we realized this using the moment systems which are already in conservation forms at the continuous level. 
Note that although not all AP schemes use the moment system, some of the popular ones, like those in Filbet-Jin \cite{Filbet-Jin}, Jin-Yan \cite{JinYan}
and micro-macro decomposition based method \cite{MM-Lemou}, indeed use it thus naturally induce the exactly conservative schemes. This fact was pointed out and utilized in \cite{JinYan}. In Section \ref{sec:7} we further extend this idea to design {\it conservative} schemes
for {\it general} (collisional or non-collisional) kinetic systems, using
the Vlasov-Poisson and Vlasov-Poisson-Boltzmann systems as examples. 
The general principle favored here is that one should solve the original kinetic equation and the moment system {\it simultaneously}. 
One first obtains the moment system
analytically and then the discrete moment system, when using spatially
conservative discretizations, {\it automatically} yields the exact conservations
of moments, if they are conserved physically. Since the total energy also includes the electric energy, 
another idea introduced here is to replace the Poisson equation for the electric field by the Amp\`{e}re equation, 
and then the coupled system is discretized in time by a carefully designed explicit-implicit scheme. 

This paper is organized as follows. Section \ref{sec:Intro} gives an introduction of two kinetic equations: the Boltzmann and the Fokker-Planck-Landau equations. In Section \ref{sec:micro}, the basic idea of the micro-macro decomposition method is reviewed. Section \ref{sec:NA} studies the fully discretized AP numerical scheme, especially on how to embed the penalization method in the micro-macro decomposition framework to solve the full nonlinear Boltzmann and Fokker-Planck-Landau equations. We also emphasize that our scheme conserves the moments (mass, momentum, energy) if these moment variables are obtained from the macroscopic system instead of from 
the particle density distribution $f$. 
Section \ref{Sec:Num} provides some implementation details, while in Section \ref{sec:NE} a number of numerical examples are used to study the conservation property as well as the performance of the new schemes in different regimes. 
In Section \ref{sec:7} we introduce conservation schemes for the Vlasov-Amp\'ere system and Vlasov-Amp\'re-Boltzmann system, with the conservations obtained through solving the moment systems and a specially designed time discretization. 
Finally, we conclude and list some future work in Section \ref{sec:FW}.

\section{Introduction of two kinetic equations}
\label{sec:Intro}

\subsection{The Boltzmann equation}

One of the most celebrated kinetic equations for rarefied gas is the Boltzmann equation, 
which describes the time evolution of the density distribution of a dilute gas of particles 
when the only interactions considered are binary elastic collisions. A dimensionless form reads
\begin{equation}\label{Boltz}
\partial_t f + v\cdot \nabla_x f= \frac{1}{\varepsilon}\, \mathcal Q_{\text{B}}(f, f), \qquad t>0, \, (x,v)\in\Omega\times\mathbb R^d, 
\end{equation}
where $f(t,x,v)$ is the probability density distribution (p.d.f) function, modeling the probability of finding a particle at time $t$, at position $x\in\Omega$, 
with velocity $v\in\mathbb R^d$. The parameter $\varepsilon$ is the Knudsen number 
defined as the ratio of the mean free path over a typical length scale such as the size of the spatial domain, which 
characterizes the degree of rarefaction of the gas. 
The Boltzmann collision operator is denoted by $\mathcal Q_{\text{B}}$, which is a bilinear functional 
and only acts on the velocity dependence of $f$, 
\begin{equation} \mathcal Q_B(f, g)(t,x,v) = \int_{\mathbb R^d}\int_{\mathbb S^{d-1}}\, 
B(|v-v_{\ast}|, \cos\theta) \left(f(t,x,v^{\prime})g(t,x,v_{\ast}^{\prime}) - f(t,x,v)g(t,x,v_{\ast})\right) d\sigma dv_{\ast}\,. 
\end{equation}
We consider the elastic interaction. The velocity pairs before and after the collision $(v, v_{\ast})$ and $(v^{\prime}, v_{\ast}^{\prime})$ have the relation, 
\begin{align}
\begin{cases}
&\displaystyle v^{\prime}= \frac{v+v_{\ast}}{2} + \frac{|v-v_{\ast}|}{2}\, \sigma, \\[6pt]
&\displaystyle v_{\ast}^{\prime} = \frac{v+v_{\ast}}{2} - \frac{|v-v_{\ast}|}{2}\, \sigma. 
\end{cases}
\end{align}
Here $\sigma$ is the scattering direction varying in the unit sphere $\mathbb S^{d-1}$, and is defined by 
$$\sigma = \frac{u^{\prime}}{|u^{\prime}|} = \frac{u^{\prime}}{|u|}, $$
where the pre- and post-collisional relative velocities $u=v-v_{\ast}$ 
and $u^{\prime}=v^{\prime}-v_{\ast}^{\prime}$ have the same magnitude, i.e., $|u^{\prime}|=|u|$. 

Cosine of the deviation angle is given by 
$$\cos\theta = \frac{u\cdot u^{\prime}}{|u|^2} = \frac{u\cdot \sigma}{|u|} := \hat u \cdot\sigma\,. $$
The collision kernel $B$ is a non-negative function, which is usually written in a form of a product 
of a power function of the relative velocity $u$ and a scattering angular function $b$ depending on $\cos\theta$, that is, 
\begin{equation} B(|v-v_{\ast}|, \cos\theta) = B(|u|, \hat u \cdot\sigma) = 
C_{\lambda}\, |u|^{\lambda}\, b(\hat u \cdot\sigma), \qquad -d\leq \lambda\leq 1. 
\end{equation} 
Here $\lambda>0$ corresponds to the hard potentials, $\lambda<0$ the soft potentials, 
and $\lambda=0$ refers to the Maxwell pseudo-molecules model. 

It is not hard to find that 
\begin{equation}\label{weak}
\int_{\mathbb R^d}\, \mathcal Q_B(f, f)(v)\phi(v)\, dv = 
\frac{1}{2} \int_{\mathbb R^d}\, f f_{\ast}\left(\phi + \phi_{\ast} - \phi^{\prime} - \phi_{\ast}^{\prime}\right) B(|v-v_{\ast}|, \cos\theta)\, 
d\sigma dv_{\ast} \end{equation}
equals to zeros if
\begin{equation}\label{phi} \phi + \phi_{\ast} = \phi^{\prime} + \phi_{\ast}^{\prime}. 
\end{equation}
One can prove that (\ref{phi}) holds if and only if $\phi(v)$ lies in the space 
spanned by the moments of mass, momentum and kinetic energy. 
We call the $d+2$ test functions $1, \, v, \, \frac{|v|^2}{2}$
{\it collision invariants} associated to $\mathcal Q_{B}$. 
Denote $$m(v) = \left(1, v, \frac{|v|^2}{2}\right)^{T}, $$ 
then \begin{equation}\label{Q_cons} \int_{\mathbb R^d}\, \mathcal Q_{B}(f, f)m(v)\, dv = 0, 
\end{equation}
which correspond to the conservation of mass, momentum and kinetic energy of $\mathcal Q_{B}$.

Define $U=(\rho, \rho u, E)^{T}$ as the velocity averages of $f$ multiplying by the collision invariants $m$, which is a
vector composing of $d+2$ conserved moments of density, momentum and energy, 
\begin{equation}\label{U_eqn} \langle m M(U)\rangle = U 
= \int_{\mathbb R^d}\begin{pmatrix}1 \\ v \\ \frac{1}{2}|v|^2 \end{pmatrix}f(v)dv = \begin{pmatrix}\rho \\ \rho u \\
\frac{1}{2}\rho\, |u|^2 + \frac{d}{2}\rho\, T \end{pmatrix} =
\begin{pmatrix} \rho \\ \rho u \\ E \end{pmatrix}.  \end{equation}

If setting $\phi(v) = \ln f(v)$ in (\ref{weak}), one can prove the following dissipation of entropy
\begin{equation}\label{H-thm1}\int_{\mathbb R^d}\, \mathcal Q_{B}(f, f)\ln f\, dv \leq 0, 
\end{equation}
which is known as the celebrated Boltzmann's H-theorem. 
Furthermore, the Boltzmann theorem for elastic interaction is given by
\begin{equation}\label{H-thm2} \int_{\mathbb R^d}\, \mathcal Q_{B}(f, f)\ln f\, dv=0  \, \Leftrightarrow\,  
\mathcal Q_{B}(f, f)=0  \, \Leftrightarrow\, f = M, 
\end{equation}
where $M$ is the equilibrium state given by a {\it Maxwellian distribution}
\begin{equation}\label{Max} M(U)(v) = \frac{\rho}{(2\pi T)^{\frac{d}{2}}}\exp\left(-\frac{|v-u|^2}{2T}\right)
:= M_{U(x,t)}(v)\,.  \end{equation}
Here $\rho$, $u$ and $T$ are respectively the density, bulk velocity, and temperature defined by 
$$\rho = \int_{\mathbb R^d}\, f(v)\, dv, \qquad u = \frac{1}{\rho}\, \int_{\mathbb R^d}\, f(v)v\, dv, 
\qquad T = \frac{1}{d \rho}\, \int_{\mathbb R^d}\, f(v)|v-u|^2\, dv. $$
\\[2pt]

{\bf The fluid limit}\, 
We introduce the notation $\langle\, \cdot\, \rangle$ as the velocity averages of the argument, i.e., 
$$ \langle f \rangle = \int_{\mathbb R^d}\, f(v)\, dv. $$
Multiplying (\ref{Boltz}) by $m(v)$ and integrating with respect to $v$, by using the conservation property of 
$\mathcal Q_{\text{B}}$ given by (\ref{Q_cons}), one has
$$ \partial_t \langle m f \rangle + \nabla_x\cdot \langle v m f \rangle = 0. $$
This gives a non-closed system of conservation laws
\begin{equation}\label{INS} \partial_t \begin{pmatrix} \rho \\ \rho u \\ E \end{pmatrix}
+ \nabla_x \cdot  \begin{pmatrix} \rho u \\ \rho u \otimes u + \mathbb P \\
E u + \mathbb P u + \mathbb Q  \end{pmatrix} = 0, 
\end{equation}
where $E$ is the energy defined in (\ref{U_eqn}), 
$\mathbb P = \langle (v-u)\otimes (v-u)f \rangle$ is the pressure tensor, 
and $\mathbb Q = \frac{1}{2}\langle (v-u) |v-u|^2 f \rangle$ is the heat flux vector. 
When $\varepsilon\to 0$, $f\to M(U)$. Replacing $f$ by $M(U)$ and using expression 
(\ref{Max}), $\mathbb P$ and $\mathbb Q$ are given by 
$$ \mathbb P = p\, I, \qquad \mathbb Q = 0, $$
where $p = \rho T$ is the pressure, $I$ is the identity matrix. Then (\ref{INS}) reduces to 
the usual compressible Euler equations
\begin{equation}\label{Euler}
\partial_t \begin{pmatrix} \rho \\ \rho u \\ E \end{pmatrix} + 
\nabla_x \cdot \begin{pmatrix} \rho u \\ \rho u\otimes u + p\, I \\ (E+p)\, u \end{pmatrix} = 0. 
\end{equation}
\subsection{The Fokker-Planck-Landau equation}
The nonlinear Fokker-Planck-Landau (nFPL) equation is widely used in plasma physics. The rescaled nFPL equation reads
\begin{equation}\label{LD}
\partial_t f + v\cdot \nabla_x f= \frac{1}{\varepsilon}\, \mathcal Q_{\text{L}}(f, f), \qquad t>0, \, (x,v)\in\Omega\times\mathbb R^d, 
\end{equation}
with the nFPL operator 
\begin{equation} \mathcal Q_{L}(f, f) = \nabla_v \cdot \int_{\mathbb R^d}\, A(v-v_{\ast})
\left(f(v_{\ast})\, \nabla_v f(v) - f(v)\, \nabla_v f(v_{\ast})\right) dv_{\ast}\,,
\end{equation}
where the semi-positive definite matrix $A(z)$ is 
$$ A(z) = \Psi(z)\, \left( I -\frac{z\otimes z}{|z|^2} \right), \qquad \Psi(z) = |z|^{\gamma+2}\,. $$
The parameter $\gamma$ characterizes the type of interaction between particles. 
The inverse power law gives $\gamma\geq -3$. 
Similar to Boltzmann collision operator,  $\gamma>0$ categorizes hard potentials, 
$\gamma=0$ for Maxwellian molecules and $\gamma<0$ for soft potentials. 
The case $\gamma=-3$ corresponding to Coulomb interactions. 

The nFPL equation is derived as a limit of the Boltzmann equation when all the collisions become grazing. 
Therefore, the nFPL operator possesses similar conservation laws and decay of entropy (H-theorem) as the Boltzmann collision operator, which are
given in (\ref{H-thm1})-(\ref{H-thm2}). 

\section{The micro-macro decomposition method}
\label{sec:micro}
When no confusion is possible, we set $M_{U(x,t)}(v)=M$ in the following. 
Consider the Hilbert space $L^2_{M}=\left\{\phi \, \big|\,  \phi\, M^{-\frac{1}{2}}\in L^2 (\mathbb R^d)\right\}$ endowed with the weighted scalar product
$$ (\phi, \, \psi)_{M}= \langle \phi\, \psi\, M^{-1}\rangle. $$
It is well-known that the linearized operator $\mathcal L_{M}$ is a non-positive self-adjoint operator on $L^2_{M}$ and that its 
null space is 
$$\mathcal N(\mathcal L_{M}) =\text{Span}\left\{M, \, |v| M,\, |v|^2 M\right\}, $$
whose orthogonal basis is
$$\mathcal B = \left\{\frac{M}{\rho}, \, \frac{(v-u)}{\sqrt{T}}\frac{M}{\rho}, \, \left(\frac{|v-u|^2}{2T}-\frac{d}{2}\right)\frac{M}{\rho}\right\}. $$
The orthogonal projection of $\phi\in L^2_{M}$ onto $\mathcal N(\mathcal L_{M})$ is given by $\Pi_{M}(\phi)$: 
$$\Pi_{M}(\phi)= \frac{1}{\rho}\left[\langle\phi\rangle + \frac{(v-u)\cdot \langle(v-u)\phi\rangle}{T} + \left(\frac{|v-u|^2}{2T}-\frac{d}{2}\right)\frac{2}{d}
\left\langle\left(\frac{|v-u|^2}{2T}-\frac{d}{2}\right)\phi\right\rangle\right] M. $$ 

We explain the main idea of the micro-macro decomposition, which mostly follows that in \cite{MM-Lemou}, where the BGK  equation, with 
$\mathcal Q_{BGK}(f, f) = \frac{1}{\tau}(M - f)$, is numerically implemented ($\tau$ is the relaxation time). 
Let $f$ be the solution of the Boltzmann equation (\ref{Boltz}). We decompose $f = f(t,x,v)$ as
\begin{equation}\label{ansatz}
f = M + \varepsilon g(x,t,v)
\end{equation}
where $U$ and $M$ are given in (\ref{U_eqn}) and (\ref{Max}) respectively. 
Inserting (\ref{ansatz}) into (\ref{Boltz}), one obtains 
$$\partial_t M + v\cdot\nabla_x M + \varepsilon (\partial_t g + v\cdot\nabla_x g)=\frac{1}{\varepsilon}\mathcal Q(M+\varepsilon g, M+ \varepsilon g). $$
Denote the linearized collision operator \begin{equation}\mathcal L_{M}(g)=2\mathcal Q(M, g). \label{LL}\end{equation}
Since $\mathcal Q$ is bilinear and $\mathcal Q(M,M)=0$, then 
$$\mathcal Q(M+\varepsilon g, M+ \varepsilon g)=\mathcal Q(M,M) + 2\varepsilon\mathcal Q(M,g) + \varepsilon^2 \mathcal Q(g,g)= \varepsilon\mathcal L_{M}(g) + \varepsilon^2 \mathcal Q(g,g), $$
thus 
\begin{equation}\label{M_1}\partial_t M + v\cdot\nabla_x M + \varepsilon (\partial_t g + v\cdot\nabla_x g) = \mathcal L_{M}(g) + \varepsilon \mathcal Q(g,g). 
\end{equation}
Applying the operator $\mathbb I - \Pi_{M}$ to (\ref{M_1}), one gets
\begin{equation}\label{g}\partial_t g + (\mathbb I - \Pi_{M})(v\cdot\nabla_x g)- \mathcal Q(g,g)= \frac{1}{\varepsilon}\left[\mathcal L_{M}(g) - (\mathbb I -\Pi_{M})(v\cdot\nabla_x M)\right]. \end{equation}
On the other hand, if we take the moments of equation (\ref{M_1}), then 
\begin{equation}\label{M_2}\partial_t \langle mM \rangle + \nabla_x \cdot \langle v m M\rangle + \varepsilon \nabla_x \cdot \langle vmg\rangle =0. \end{equation}
Denote the flux vector of $U$ by 
$$F(U)=\langle v m M\rangle = \begin{pmatrix} \rho u \\ \rho u \otimes u + \rho T \\ E u + \rho T u \end{pmatrix}, $$ then (\ref{M_2}) becomes 
\begin{equation}\label{U}\partial_t U  + \nabla_x \cdot F(U) +  \varepsilon \nabla_x \cdot \langle vmg\rangle =0. \end{equation}
Therefore, the coupled system (\ref{g}) and (\ref{U}) gives  a kinetic/fluid formulation of the Boltzmann equation. 
It has been shown in \cite{MM-Lemou} that this coupled system is equivalent to the Boltzmann equation (\ref{Boltz}).
\\[4pt]

{\bf Initial and boundary conditions} \\
For the initial condition, we set $$ f(t=0, x, v) = f^{0}(x, v). $$
$x$ is in a bounded set $\Omega$ with boundary $\Gamma$. 
For the numerical implementation purpose, we only consider the periodic boundary condition (BC) in $x$ in this paper. Nevertheless, 
we briefly mention other types of BC. 

For points $x$ on the boundary $\Gamma$, the distribution function of incoming velocities 
(i.e., $v$ with $v\cdot n(x)<0$, where $n(x)$ is the outer normal vector of $\Gamma$ at $x$) should be specified. 
The Dirichlet BC reads
\begin{equation}\label{BC1} f(t,x,v) = f_{\Gamma}(t,x,v) \qquad \forall x\in\Gamma, \,  \forall v,  \,  \text{s.t.}\, v\cdot n(x)<0. 
\end{equation}
The reflecting BC is given by 
\begin{equation}\label{BC2} f(t,x,v) = \int_{v^{\prime}\cdot n(x)>0}\, K(x,v,v^{\prime})f(t,x,v^{\prime})\, dv^{\prime} \qquad 
\forall x\in\Gamma, \, \forall v, \, \text{s.t.}\, v\cdot n(x)<0, 
\end{equation}
where the kernel $K$ satisfies the zero normal mass flux condition
across the boundary: 
$$ \int_{\Gamma}\, v\cdot n(x) f(t,x,v)\, dv = 0. $$
The periodic BC can be used when the shape of $\Omega$ is symmetric, 
$$ f(t,x,v) = f(t, Sx, v), \qquad x\in\Gamma_1, \, \forall v, $$ 
where S is a one-to-one mapping from a part $\Gamma_1$ of $\Gamma$ onto another part $\Gamma_2$ of $\Gamma$. 

In general, using the micro-macro decomposition into boundary conditions (\ref{BC1})-(\ref{BC2})
provides relations for $M + \varepsilon g$, but do not provide the values for $M$ and $g$ separately. 
Moreover, $f$ is generally known only for incoming velocities at boundary points, which may induce difficulties to define the macroscopic
moments $U$. Note that various numerical boundary conditions based on micro-macro formulation for {\it linear} kinetic equations in the diffusion limit 
is studied in \cite{Lemou-BC}. 

\section{Numerical Approximation}
\label{sec:NA}

\subsection{Time discretization}
We denote $\Delta t$ a fixed time step, $t_n$ a discrete time with $t_n=n\Delta t$, $n\in \mathbb N$. 
Let $U^n(x)\approx U(t_n, x)$, $g^n(x,v)\approx g(t_n, x, v)$. Note that in equation (\ref{g}), $\varepsilon^{-1}\mathcal L_{M}(g)$ is the only collision term
that presents the stiffness, thus one needs to take an implicit discretization for this term, while the term $(I -\Pi_{M})(v\cdot \nabla_x M)$ is still explicit. 
The time discretization for (\ref{g}) is given by 
\begin{equation}\label{g_discrete}\frac{g^{n+1}-g^n}{\Delta t} +
 (\mathbb I - \Pi_{M^n})(v\cdot\nabla_x g^n) - \mathcal Q(g^n,g^n)=
 \frac{1}{\varepsilon}\left[\mathcal L_{M^n}(g^{n+1}) - (\mathbb I -\Pi_{M^n})(v\cdot\nabla_x M^n)\right]. 
 \end{equation}

For the time discretization of the fluid part (\ref{U}), the flux $F(U)$ at time $t_n$ is approximated by $F(U^n)=\langle v m M^n \rangle$, 
and the convection term $\nabla_x \cdot \langle v m g\rangle$ is discretized by $\nabla_x \cdot \langle v m g^{n+1}\rangle$, 
\begin{equation}\label{U_discrete}\frac{U^{n+1}-U^n}{\Delta t}  + \nabla_x \cdot F(U^n) +  
\varepsilon\nabla_x \cdot \langle v m g^{n+1}\rangle =0. 
\end{equation}

In \cite{MM-Lemou} only BGK collision operator was considered, thus avoided
the difficulty of inverting the $\mathcal L_{M^n}(g^{n+1})$ term in (\ref{g_discrete}),  
since the implicit BGK operator can be inverted explicitly, thanks
to the conservation property of the operator due to (\ref{U_eqn}). For general
collision operator this is no longer true. In the next subsection, we propose
an efficient method to deal with the term $\mathcal L_{M^n}(g^{n+1})$, which
is one of the main ideas of this paper.

\subsection{AP schemes by penalization}

To avoid the complication of inverting the stiff, implicit linearized collision operator $\mathcal L_{M^n}(g^{n+1})$ in (\ref{g_discrete}), 
our proposed method is to use the relation
$$ \mathcal Q(M, g)=\frac{1}{4}\left[ \mathcal Q(M+g, M+g)  - \mathcal Q(M-g, M-g)\right], $$
and by (\ref{LL}), namely $\mathcal L_{M}(g)=2\mathcal Q(M, g)$, then 
\begin{equation}\label{L-Mg}\mathcal L_{M^n}(g^{n+1})=\frac{1}{2}\left[ \mathcal Q(M^n + g^{n+1}, M^n + g^{n+1}) - 
\mathcal Q(M^n - g^{n+1}, M^n - g^{n+1})\right]. 
\end{equation}
To deal with the implicit collision operator $\mathcal Q$, we adopt the penalization method developed in \cite{Filbet-Jin} for the Boltzmann equation, 
and that in \cite{JinYan} for the Fokker-Planck-Landau equation. 

We briefly recall the spirit of the penalization for the collision operators used in \cite{Filbet-Jin, JinYan}. 
They introduced some dissipative penalization operator $\mathcal P$ for the Boltzmann or the FPL collision operator. 
The collision operators $\mathcal Q_B$ and $\mathcal Q_L$ in (\ref{Boltz}) or (\ref{LD}), when divided by a small Knudsen number $\varepsilon$,
become numerically stiff. 
Since explicit schemes require severe stability constraints and are computationally expensive, while 
implicit schemes, though allow larger time step, are difficult to seek numerical solution of a fully nonlinear problem at each time step, thus one desires to 
combine both advantages of implicit and explicit schemes for solving the stiff problem: large time step and low computational complexity. 

The idea is to split the RHS of (\ref{Boltz}) or (\ref{LD}) as the sum of a stiff part and a less stiff part as
$$\frac{\mathcal Q(f,f)}{\varepsilon} = \underbrace{\frac{\mathcal Q(f^n,f^n) - \mathcal P(f^n)}{\varepsilon}}_{\text{less stiff}} + \underbrace{\frac{\mathcal P(f^{n+1})}{\varepsilon}}_{\text{stiff}}, $$
where $\mathcal Q$ represents $\mathcal Q_B$ or $\mathcal Q_L$, 
$\mathcal P(f)$ is a well balanced, linear operator and is asymptotically close to the source term $\mathcal Q(f, f)$. 
We adopt a first order implicit-explicit (IMEX) scheme for the time discretization here. 

The following gives an explicit explanation on how the linearized operator is implemented by combining the formulation (\ref{L-Mg}) and the 
penalization strategies for the Boltzmann and the FPL equations. 
The advantage of using BGK for the Boltzmann and linear Fokker-Planck for the FPL equation is that these penalty operators are much easier to invert than the original kinetic operators when discretized implicitly. In particular, the implicit BGK operator can be inverted explicitly, 
while the Fokker-Planck operator can be inverted as a linear symmetric operator. See \cite{Filbet-Jin, JinYan}. 
\\[6pt]
{\bf I}. For the Boltzmann equation, the linear BGK collision operator \cite{Filbet-Jin}
\begin{equation} P(f) = P_{BGK}^{M}f = \beta(M - f) \end{equation}
is used as the penalty operator. Now we replace $\mathcal L_{M^n}(g^{n+1})$ in (\ref{U_discrete}) by $\mathcal L_{M^n}^P(g^{n+1})$, given by 
\begin{align}
&\displaystyle\mathcal L_{M^n}^P(g^{n+1}) = \frac{1}{2}\bigg[\mathcal Q_{\text{B}}(M^n+g^n, M^n+g^n) - \beta_1^n (M^n - (M^n+ g^n)) 
+ \beta_1^{n+1} (M^{n+1}-(M^{n+1} + g^{n+1})) \notag\\[2pt]
&\displaystyle \qquad\qquad\qquad - \left\{\mathcal Q_{\text{B}}(M^n-g^n, M^n-g^n) - \beta_2^n (M^n - (M^n - g^n)) + \beta_2^{n+1} (M^{n+1}- (M^{n+1} - g^{n+1})) \right\}\bigg]\notag\\[2pt]
&\displaystyle \qquad\qquad\quad = \frac{1}{2}\bigg[\mathcal Q_{\text{B}}(M^n+g^n, M^n+g^n)  + \beta_1^n g^n -  \beta_1^{n+1} g^{n+1} \notag\\[2pt]
&\displaystyle \qquad\qquad\qquad  - \mathcal Q_{\text{B}}(M^n-g^n, M^n-g^n)  + \beta_2^n g^n - \beta_2^{n+1} g^{n+1})\bigg] \notag\\[2pt]
&\displaystyle\label{LG1}
\qquad\qquad\quad =  \frac{1}{2}\left[\mathcal Q_{\text{B}}(M^n+g^n, M^n+g^n) - \mathcal Q_{\text{B}}(M^n-g^n, M^n-g^n)\right]\notag\\[2pt]
&\displaystyle\qquad\qquad\qquad + \frac{1}{2}(\beta_1^n + \beta_2^n) g^n - \frac{1}{2}(\beta_1^{n+1} + \beta_2^{n+1})g^{n+1}\,. 
\end{align}

In the Boltzmann equation, the parameter $\beta>0$ is chosen as an upper bound of $||\nabla \mathcal Q(M)||$ or some approximation of it,
for example, 
\begin{align}
\begin{split}
\label{penalty1} 
\displaystyle\beta_1^n &= \sup_{v}\left|\frac{\mathcal Q(M^n+g^n, M^n+g^n) -\mathcal Q(M^n,M^n)}{g^n}\right|
=  \sup_{v}\left|\frac{\mathcal Q(M^n+g^n, M^n+g^n)}{g^n}\right|, \\[4pt]
\displaystyle
\beta_2^n &= \sup_{v} \left|\frac{\mathcal Q(M^n-g^n, M^n-g^n) - \mathcal Q(M^n,M^n)}{g^n}\right| = 
\sup_{v} \left|\frac{\mathcal Q(M^n-g^n, M^n-g^n)}{g^n}\right|. 
\end{split}
\end{align}
\\[6pt]
{\bf II}. For the nFPL equation, the linear Fokker-Planck (FP) operator 
\begin{equation}\label{FP}P(f) = P_{FP}^{M}f = \nabla_{v}\cdot \left(M \nabla_{v}\left(\frac{f}{M}\right)\right) 
\end{equation}
is chosen as the suitable penalty operator \cite{JinYan}. We now replace $\mathcal L_{M^n}(g^{n+1})$ in (\ref{U_discrete}) by $\mathcal L_{M^n}^P(g^{n+1})$
(and use the bracket notation $( \cdot )$ to denote $P$ imposed on the argument), 
\begin{align}
&\displaystyle\mathcal L_{M^n}^P(g^{n+1}) = \frac{1}{2}\bigg[\mathcal Q_{\text{L}}(M^n+g^n, M^n+g^n) - \beta_1^n P^n (M^n+g^n) 
+ \beta_1^n P^{n+1}(M^{n+1}+g^{n+1})\notag\\[2pt]
&\displaystyle \qquad\qquad\qquad - \left\{\mathcal Q_{\text{L}}(M^n-g^n, M^n-g^n) - \beta_2^n P^n (M^n-g^n) + \beta_2^n P^{n+1} (M^{n+1}-g^{n+1}) 
\right\}\bigg] \notag\\[2pt]
&\displaystyle \qquad\qquad\quad = \frac{1}{2}\bigg[\mathcal Q_{\text{L}}(M^n+g^n, M^n+g^n) - \beta_1^n P^n (g^n) + \beta_1^n P^{n+1} (g^{n+1})\notag\\[2pt]
&\displaystyle \qquad\qquad\qquad - \mathcal Q_{\text{L}}(M^n-g^n, M^n-g^n)  - \beta_2^n P^n (g^n) + \beta_2^n P^{n+1} (g^{n+1}) \bigg] \notag\\[2pt]
&\displaystyle\label{LG2} 
\qquad\qquad\quad =  \frac{1}{2}\left[\mathcal Q_{\text{L}}(M^n+g^n, M^n+g^n) - \mathcal Q_{\text{L}}(M^n-g^n, M^n-g^n)\right] \notag\\[2pt]
&\displaystyle\qquad\qquad\qquad - \frac{1}{2}(\beta_1^n + \beta_2^n)P^n (g^n) + \frac{1}{2}(\beta_1^n + \beta_2^n)P^{n+1} (g^{n+1})\,, 
\end{align}
where the well-balanced property of $P$, i.e., $P^n (M^n) =P^{n+1} (M^{n+1}) = 0$ is used. 
 
In (\ref{LG2}), $\beta_1^n$ and $\beta_2^n$ are chosen as
\begin{align*}
&\displaystyle\beta_1^n = \beta_0 \max_{v}\lambda(D_{A}(g^n+M^n)), \\[4pt]
&\displaystyle \beta_2^n = \beta_0 \max_{v}\lambda(D_{A}(g^n-M^n)). 
\end{align*}
$\beta_0$ is a constant satisfying $\beta_0>\frac{1}{2}$, and a simple choice is $\beta_0=1$. 
$\lambda(D_{A})$ is the spectral radius of the positive symmetric matrix $D_{A}$, 
$$ D_{A}(f)= \int_{\mathbb R^d} A(v-v_{\ast})f_{\ast}\, dv_{\ast}, $$

\subsection{Space and velocity discretizations}

{\bf Space discretization}\, 
For simplicity and clarity of notations, we only consider $x\in\mathbb R$.
As done in \cite{MM-Lemou}, a finite volume discretization is used for the transport term in the left-hand-side of (\ref{g_discrete}); 
a central difference scheme is used to discretize the term $(\mathbb I -\Pi_{M^n})(v\cdot\nabla_x M^n)$ via (\ref{g_discrete}), 
and the term $\varepsilon\nabla_x \cdot\langle v m g^{n+1}\rangle$ via (\ref{U_discrete}). 

Consider spatial grid points $x_{i+\frac{1}{2}}$ and $x_i$ the center of the cell $[x_{i-\frac{1}{2}}, x_{i+\frac{1}{2}}]$, for $i=0, \cdots N_x$. 
A uniform space step is $\Delta x=x_{i+\frac{1}{2}}-x_{i-\frac{1}{2}}=x_{i}-x_{i-1}$. 
Let $U_i^n \approx U(t_n, x_i)$ and $g_{i+\frac{1}{2}}^n \approx g(t_n, x_{i+\frac{1}{2}})$. 
Now we define the following notations for the finite difference operators. 
For every grid function $\phi=(\phi_{i+\frac{1}{2}})$, define the one-sided difference operators: 
$$ D^{-}\phi_{i+\frac{1}{2}}=\frac{\phi_{i+\frac{1}{2}}-\phi_{i-\frac{1}{2}}}{\Delta x}, \qquad 
D^{+}\phi_{i+\frac{1}{2}}=\frac{\phi_{i+\frac{3}{2}}-\phi_{i+\frac{1}{2}}}{\Delta x}. $$
For every grid function $\mu=(\mu_{i})$, we define the following centered operator: 
$$ \delta^{0}\mu_{i+\frac{1}{2}}=\frac{\mu_{i+1}-\mu_{i}}{\Delta x}. $$

{\bf Velocity discretization}\, We adopt the simple trapezoidal rule to compute the numerical integral in velocity space. 
For example, we write the one-dimensional trapezoidal rule, 
$$ \int_{\mathbb R} f \, dv \approx \Delta v \left(\frac{1}{2}f(v_0) + f(v_1) + \cdots + f(v_{N_{v}-1}) + \frac{1}{2}f(v_{N_v})\right) := 
\sum_{j=0}^{N_v} f(v_j) w_j\, \Delta v, $$
where $w=(\frac{1}{2}, 1, \cdots, 1, \frac{1}{2})$. 
\\[2pt]

{\bf Macroscopic equations} \\
The fluid equation (\ref{U_discrete}) is approximated at points $x_i$. The flux $\partial_{x}F(U^n)$ at $x_i$ is discretized by 
\begin{equation} \partial_{x}F(U^n)\big|_{x_i} \approx\frac{F_{i+\frac{1}{2}}(U^n)- F_{i-\frac{1}{2}}(U^n)}{\Delta x},  \end{equation}
where upwind-based discretization is used to approximate
$F(U^n)=\langle vm M^n \rangle$ at points $x_{i+\frac{1}{2}}$. The first order approximation is given by
\begin{equation} F_{i+\frac{1}{2}}(U^n) = \langle m(v^{+}M_{i}^n + v^{-}M_{i+1}^n)\rangle. \end{equation}
A second order approximation of $\partial_x F(U)$ term will be discussed in section \ref{Sec:Num}.

The flux term $\partial_{x}\langle vm g^{n+1}\rangle$ at $x_i$ on the right-hand-side of (\ref{U_discrete}) is approximated by central differences, 
\begin{equation}
\partial_{x}\langle vmg^{n+1}\rangle \big|_{x_i} \approx\bigg\langle v m \frac{g_{i+\frac{1}{2}}^{n+1}- g_{i-\frac{1}{2}}^{n+1}}{\Delta x}\bigg\rangle. \end{equation}
The fully discretized scheme of the equation (\ref{U_discrete}) then reads
\begin{equation}\label{full_U}
\frac{U_i^{n+1}-U_i^n}{\Delta t} + \frac{F_{i+\frac{1}{2}}(U^n)-F_{i-\frac{1}{2}}(U^n)}{\Delta x} = 
- \varepsilon\sum_{j=0}^{N_v}\, v_j\, m(v_j)\, \frac{g_{i+\frac{1}{2}, j}^{n+1} - g_{i-\frac{1}{2}, j}^{n+1}}{\Delta x}\, w_j\, \Delta v, 
\end{equation}
where $g_{i+\frac{1}{2}, j}^n \approx g(t_n, x_{i+\frac{1}{2}}, v_j)$.

Next, we prove that the discrete macroscopic equations (\ref{full_U}) conserve mass, momentum and total energy.

\begin{theorem} {\bf (Conservation of moments $U$)}\\
For periodic or zero flux boundary condition, one has
  \begin{equation}\label{cons-U}
    \sum_{i=0}^{N_x}\, U_i^{n+1} = \sum_{i=0}^{N_x}\, U_i^n\,.
    \end{equation}
Namely, the total mass, momentum and energy are all numerically conserved. 
\end{theorem}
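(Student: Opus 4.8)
The plan is to sum the fully discrete macroscopic update (\ref{full_U}) over all spatial cells and exploit that the scheme is written in flux-difference (finite-volume) form: every term besides the time increment is a spatial difference, so the sum telescopes and only interface values at the two ends of the domain survive. This is precisely the structural feature emphasized in the introduction, and the proof is essentially a bookkeeping of boundary terms.

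First I would multiply (\ref{full_U}) by $\Delta t$ and sum over $i=0,\dots,N_x$. The term $\sum_{i=0}^{N_x}\big(F_{i+\frac12}(U^n)-F_{i-\frac12}(U^n)\big)$ telescopes to $F_{N_x+\frac12}(U^n)-F_{-\frac12}(U^n)$. On the right-hand side, since $v_j$, $m(v_j)$, $w_j$ and $\Delta v$ do not depend on $i$, the discrete velocity quadrature commutes with the spatial summation, and $\sum_{i=0}^{N_x}\big(g^{n+1}_{i+\frac12,j}-g^{n+1}_{i-\frac12,j}\big)=g^{n+1}_{N_x+\frac12,j}-g^{n+1}_{-\frac12,j}$ telescopes as well. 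Hence
\begin{align*}
\sum_{i=0}^{N_x}\!\big(U_i^{n+1}-U_i^n\big) &= -\frac{\Delta t}{\Delta x}\big[F_{N_x+\frac12}(U^n)-F_{-\frac12}(U^n)\big]\\
&\quad -\frac{\varepsilon\,\Delta t}{\Delta x}\sum_{j=0}^{N_v} v_j\,m(v_j)\,\big(g^{n+1}_{N_x+\frac12,j}-g^{n+1}_{-\frac12,j}\big)\,w_j\,\Delta v .
\end{align*}
Then I would invoke the boundary condition to kill the right-hand side. For periodic BC one identifies the ghost interfaces $x_{-\frac12}$ and $x_{N_x+\frac12}$, so that $F_{N_x+\frac12}(U^n)=F_{-\frac12}(U^n)$ (equivalently, in the upwind flux formula, $M^n$ at the ghost cells equals $M^n$ at the opposite physical cells) and $g^{n+1}_{N_x+\frac12,j}=g^{n+1}_{-\frac12,j}$, so both bracketed differences vanish identically. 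For the zero normal mass-flux BC, the fluxes through $\Gamma$ vanish by construction: the Euler flux $F$ carries no net normal mass/momentum/energy across $\Gamma$, and the microscopic contribution is exactly $\varepsilon\langle v\,m\,g\rangle\cdot n$ through the boundary, which is zero by the same constraint. In either case the right-hand side is $0$, which gives (\ref{cons-U}).

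As for the main obstacle: there is essentially no analytic difficulty here — the conservation is a direct consequence of the macroscopic scheme being in spatially conservative form, so summation by parts (telescoping) does all the work. The only points that require care are (i) making the boundary bookkeeping precise, i.e.\ specifying how the ghost values at $x_{-\frac12}$ and $x_{N_x+\frac12}$ for \emph{both} $U$ (through $M^n$ in the upwind flux) and $g^{n+1}$ are defined so that discrete periodicity, or the discrete zero-flux identity, genuinely holds; and (ii) observing that the argument never uses how $g^{n+1}$ is actually produced from (\ref{g_discrete}) — in particular it does not rely on any conservation property of the (penalized) collision solver, since the conserved moments are read off from the macro variable $U$ and not recomputed from $f=M+\varepsilon g$. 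This independence is exactly the advantage the scheme is designed to exploit.
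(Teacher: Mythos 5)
Your proposal is correct and follows essentially the same route as the paper: sum the conservative-form update (\ref{full_U}) over all cells, let both flux differences telescope, and use the periodic or zero-flux boundary condition to annihilate the surviving boundary terms. The extra remarks on ghost-cell bookkeeping and on the argument's independence from the collision solver are consistent with, and slightly more explicit than, the paper's one-line justification.
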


\begin{proof} 
Summing up all $i=0, \cdots, N_x$ on (\ref{full_U}),  one has
\begin{equation}\label{U_Cons} \frac{\sum_{i} U_i^{n+1} - \sum_{i} U_i^n}{\Delta t} + \sum_{i}\left(\frac{F_{i+\frac{1}{2}}(U^n) - F_{i-\frac{1}{2}}(U^n)}{\Delta x}\right)  
= -\varepsilon \sum_{i}\sum_{j=0}^{N_v}\, v_j\, m(v_j)\, \frac{g_{i+\frac{1}{2}, j}^{n+1} - g_{i-\frac{1}{2}, j}^{n+1}}{\Delta x}\, w_j\, \Delta v.  
\end{equation}
By the assumption on the boundary condition, the telescoping summation terms vanish, and then one has (\ref{cons-U}).
\end{proof}

\begin{remark} If $\varepsilon$ is spatially dependent, then (\ref{U_discrete}) is written by
$$\frac{U^{n+1}-U^n}{\Delta t}  + \nabla_x \cdot F(U^n) +  
\nabla_x \cdot \langle \varepsilon v m g^{n+1}\rangle =0, $$ 
and (\ref{U_Cons}) correspondingly becomes
\begin{align}
&\displaystyle\quad\frac{\sum_{i} U_i^{n+1} - \sum_{i} U_i^n}{\Delta t} + \sum_{i}\left(\frac{F_{i+\frac{1}{2}}(U^n) - F_{i-\frac{1}{2}}(U^n)}{\Delta x}\right) \notag
\\[4pt]
&\label{U_Cons1}\displaystyle = - \sum_{i}\sum_{j=0}^{N_v}\, v_j\, m(v_j)\, \frac{\varepsilon_{i+\frac{1}{2}}\, g_{i+\frac{1}{2}, j}^{n+1} - \varepsilon_{i-\frac{1}{2}}\, g_{i-\frac{1}{2}, j}^{n+1}}{\Delta x}\, w_j\, \Delta v, 
\end{align}
with $\varepsilon_{i+\frac{1}{2}}=\varepsilon(x_{i+\frac{1}{2}})$, $\varepsilon_{i-\frac{1}{2}}=\varepsilon(x_{i-\frac{1}{2}})$. 
This again has the conservation property (\ref{cons-U}). 
\end{remark}

\begin{remark}\label{rmk-cons}
  Typically, a discrete collision operator, particularly those based on
  spectral approximations in velocity space \cite{gamba2017fast, pareschi2000numerical,gamba2009spectral, mouhot2006fast}, does not
  conserve {\it exactly} the moments $U$ (\ref{cons-U}), which
  needs to be taken care of with extra efforts \cite{mieussens2000discrete, zhang2017conservative, gamba2014conservative}.
  What differs here is that the conserved variables $U$ are obtained from
  the macroscopic system (\ref{U}), which has  the zero right hand side,
  thus the conservation property (\ref{cons-U}) can be easily guaranteed
  by {\it any} conservative discretization of the spatial derivative in
  (\ref{U}). What differs here from those typical kinetic solvers in \cite{zhang2017conservative, gamba2014conservative} is that in the latter
  cases the moments were obtained by taking the discrete moments from $f$, computed from
  the original kinetic equation for $f$, with the collision operator
  discretized not in an exactly conserved way! This observation is not new,
  and in fact was already pointed out in \cite{JinYan}. In section \ref{sec:7}
  this point will be further explored for general kinetic systems and this offers 
  a generic recipe for obtaining (exactly) conservative schemes through solving the
  moment systems.
\end{remark}

{\bf Microscopic equation}\\
Equation (\ref{g_discrete}) is approximated at grid point $x_{i+\frac{1}{2}}$; the term $(\mathbb I - \Pi_{M^n})(v\cdot\nabla_x g^n)$ in the left-hand-side is approximated by a first order upwind scheme 
\begin{equation} \label{1st-order}
  (\mathbb I - \Pi_{M^n})(v\, \partial_{x}g^n) \big|_{x_{i+\frac{1}{2}}} 
\approx \left(\mathbb I - \Pi_{i+\frac{1}{2}}^n \right)\left(v^{+}D^{-}+v^{-}D^{+}\right)g_{i+\frac{1}{2}}^n. \end{equation}
The transport term $(\mathbb I -\Pi_{M^n})(v\cdot\nabla_x M^n)$ in the right-hand-side of (\ref{g_discrete}) is discretized by a central difference
scheme 
\begin{equation}  (\mathbb I -\Pi_{M^n})(v \, \partial_{x}M^n) \big|_{x_{i+\frac{1}{2}}} \approx \left(\mathbb I - \Pi_{i+\frac{1}{2}}^n \right)\left(v\, \delta^{0}M_{i+\frac{1}{2}}^n\right), 
\end{equation}
where $\Pi_{i+\frac{1}{2}}^n$ is an approximation of $\Pi_{M(U(t_n, x_{i+\frac{1}{2}}))}$. 
A suitable choice of $\Pi_{i+\frac{1}{2}}^n$ is given by (\cite{MM-Lemou})
\begin{equation}\label{Pie}
\Pi_{i+\frac{1}{2}}^n = \frac{\Pi_{i}^n +\Pi_{i+1}^n}{2}=\frac{\Pi(U_i^n)+\Pi(U_{i+1}^n)}{2},  \qquad \text{   or    }\, 
\Pi_{i+\frac{1}{2}}=\Pi  \left(\frac{U_{i}+U_{i+1}}{2}\right),
\end{equation}
and $M_{i+\frac{1}{2}}^n \approx\frac{M_{i}^n+M_{i+1}^n}{2}$. 
\\[2pt]

{\bf I.}\,  For the Boltzmann equation, the discretized scheme of the microscopic equations (\ref{g_discrete}) is given by
\begin{align}
&\displaystyle \quad\frac{g_{i+\frac{1}{2}}^{n+1}-g_{i+\frac{1}{2}}^{n}}{\Delta t} + 
\left(\mathbb I - \Pi_{i+\frac{1}{2}}^n \right)\left(v^{+}\, \frac{g_{i+\frac{1}{2}}^{n}-g_{i-\frac{1}{2}}^{n}}{\Delta x}+ v^{-}\, \frac{g_{i+\frac{3}{2}}^{n}-g_{i+\frac{1}{2}}^{n}}{\Delta x}\right) - \mathcal Q_{B}(g^n_{i+\frac{1}{2}}, g^n_{i+\frac{1}{2}}) \notag  \\[8pt]
&\displaystyle = \frac{1}{\varepsilon}\bigg[\frac{1}{2}\left(\mathcal Q_{B}(M_{i+\frac{1}{2}}^n+g_{i+\frac{1}{2}}^n, M_{i+\frac{1}{2}}^n+g_{i+\frac{1}{2}}^n) 
- \mathcal Q_{B}(M_{i+\frac{1}{2}}^n-g_{i+\frac{1}{2}}^n, M_{i+\frac{1}{2}}^n-g_{i+\frac{1}{2}}^n)\right) + \frac{1}{2}(\beta_1^n + \beta_2^n) g_{i+\frac{1}{2}}^n  \notag\\[8pt]
&\displaystyle\label{full_g0} \qquad  - \frac{1}{2}(\beta_1^{n+1} + \beta_2^{n+1}) g_{i+\frac{1}{2}}^{n+1}
- \left(\mathbb I - \Pi_{i+\frac{1}{2}}^n \right)\left(v\, \frac{M_{i+1}^n-M_{i}^n}{\Delta x}\right)\bigg],  
\end{align}
thus 
\begin{align}
&\displaystyle g_{i+\frac{1}{2}}^{n+1}=\frac{1}{1+\frac{\Delta t}{2\varepsilon}(\beta_1^{n+1}+\beta_2^{n+1})}\, \bigg[g_{i+\frac{1}{2}}^{n} - 
\Delta t  \left(\mathbb I - \Pi_{i+\frac{1}{2}}^n \right)\left(v^{+}\, \frac{g_{i+\frac{1}{2}}^{n}-g_{i-\frac{1}{2}}^{n}}{\Delta x}+ v^{-}\, \frac{g_{i+\frac{3}{2}}^{n}-g_{i+\frac{1}{2}}^{n}}{\Delta x}\right)  \notag\\[8pt]
&\displaystyle\qquad + \Delta t\, \mathcal Q_{B}(g^n_{i+\frac{1}{2}}, g^n_{i+\frac{1}{2}})  + \frac{\Delta t}{\varepsilon}
\bigg(\frac{1}{2}\left(\mathcal Q_{B}(M_{i+\frac{1}{2}}^n+g_{i+\frac{1}{2}}^n, M_{i+\frac{1}{2}}^n+g_{i+\frac{1}{2}}^n)
- \mathcal Q_{B}(M_{i+\frac{1}{2}}^n-g^n_{i+\frac{1}{2}}, M_{i+\frac{1}{2}}^n-g_{i+\frac{1}{2}}^n)\right)\notag\\[8pt]
&\displaystyle \label{full_g}\qquad\qquad\qquad\qquad\qquad\qquad\quad + \frac{1}{2}(\beta_1^n + \beta_2^n) g_{i+\frac{1}{2}}^n
- \left(\mathbb I - \Pi_{i+\frac{1}{2}}^n \right)\left(v\, \frac{M_{i+1}^n-M_{i}^n}{\Delta x}\right)\bigg)\bigg]\,.  
\end{align}
\begin{remark}
To improve to second-order spatial discretization of $v\, \partial_x g$ in the above equation, one uses a second order upwind (MUSCL) discretization in
(\ref{1st-order}), and then (\ref{full_g}) is replaced by
\begin{align}
&\displaystyle g_{i+\frac{1}{2}}^{n+1}=\frac{1}{1+\frac{\Delta t}{2\varepsilon}(\beta_1^{n+1}+\beta_2^{n+1})}\, \bigg[g_{i+\frac{1}{2}}^{n} - 
\Delta t  \left(\mathbb I - \Pi_{i+\frac{1}{2}}^n\right)\left(\frac{G_{i+1}^n - G_i^n}{\Delta x}\right)  \notag\\[8pt]
&\displaystyle\qquad + \Delta t\, \mathcal Q_{B}(g^n_{i+\frac{1}{2}}, g^n_{i+\frac{1}{2}})  + \frac{\Delta t}{\varepsilon}
\bigg(\frac{1}{2}\left(\mathcal Q_{B}(M_{i+\frac{1}{2}}^n+g_{i+\frac{1}{2}}^n, M_{i+\frac{1}{2}}^n+g_{i+\frac{1}{2}}^n)
- \mathcal Q_{B}(M_{i+\frac{1}{2}}^n-g^n_{i+\frac{1}{2}}, M_{i+\frac{1}{2}}^n-g_{i+\frac{1}{2}}^n)\right)\notag\\[8pt]
&\displaystyle \label{full_g2}\qquad\qquad\qquad\qquad\qquad\qquad\quad + \frac{1}{2}(\beta_1^n + \beta_2^n) g_{i+\frac{1}{2}}^n
- \left(\mathbb I - \Pi_{i+\frac{1}{2}}^n \right)\left(v\, \frac{M_{i+1}^n-M_{i}^n}{\Delta x}\right)\bigg)\bigg]\,, 
\end{align}
where
\begin{equation} \label{419}
   G_i^n = v^{+} g_i^{+, n} + v^{-} g_i^{-, n} = v^{+} \left(g_{i-\frac{1}{2}}^n + \frac{\Delta x}{2}\, \delta g_{i-\frac{1}{2}}^n\right) 
   + v^{-}\left(g_{i+\frac{1}{2}}^n - \frac{\Delta x}{2}\, \delta g_{i+\frac{1}{2}}^n\right), \qquad i = 0, \cdots, N,
\end{equation}
and $\delta g$ represents a slope with a slope limiter, given by \cite{LeVeque}
$$\delta g_{j-\frac{1}{2}}^n = \frac{1}{\Delta x}\, \text{minmod}\left\{g_{j+\frac{1}{2}}^n - g_{j-\frac{1}{2}}^n, \, g_{j-\frac{1}{2}}^n - g_{j-\frac{3}{2}}^n\right\}, \qquad j = 0, \cdots, N+1.
$$
\end{remark}

{\bf II. }\,  For the nFPL equation, we first introduce the symmetrized operator in \cite{JinYan}
$$ \widetilde P h = \frac{1}{\sqrt{M}}\nabla_v \cdot\left(M\nabla_v \left(\frac{h}{\sqrt{M}}\right)\right). $$
Thus the penalty operator given in (\ref{FP}) can be rewritten as
$$ P_{FP}^{M} f = \sqrt{M}\widetilde P \frac{f}{\sqrt{M}}. $$
Use (\ref{LG2}), (\ref{full_g0}) correspondingly becomes 
\begin{align}
&\displaystyle g_{i+\frac{1}{2}}^{n+1}= \left(\mathbb I  - 
\frac{\Delta t}{2\varepsilon}(\beta_1^{n} + \beta_2^{n}) P^{n+1}\right)^{-1}
\bigg[g_{i+\frac{1}{2}}^{n} - \Delta t \left(\mathbb I - \Pi_{i+\frac{1}{2}}^n \right)\left(v^{+}\, \frac{g_{i+\frac{1}{2}}^{n}-g_{i-\frac{1}{2}}^{n}}{\Delta x}+ v^{-}\, \frac{g_{i+\frac{3}{2}}^{n}-g_{i+\frac{1}{2}}^{n}}{\Delta x}\right)  \notag\\[8pt]
&\displaystyle \qquad  + \Delta t\, \mathcal Q_{L}(g^n_{i+\frac{1}{2}}, g^n_{i+\frac{1}{2}})  + \frac{\Delta t}{\varepsilon}
\bigg(\frac{1}{2}\left(\mathcal Q_{L}(M_{i+\frac{1}{2}}^n+g_{i+\frac{1}{2}}^n, M_{i+\frac{1}{2}}^n+g_{i+\frac{1}{2}}^n) - \mathcal Q_{L}(M_{i+\frac{1}{2}}^n-g_{i+\frac{1}{2}}^n, M_{i+\frac{1}{2}}^n-g_{i+\frac{1}{2}}^n)\right) \notag\\[8pt]
&\displaystyle \label{full_gL}\qquad\qquad\qquad\qquad\qquad\qquad\quad -\frac{1}{2}(\beta_1^n + \beta_2^n)P^n g_{i+\frac{1}{2}}^n 
- \left(\mathbb I - \Pi_{i+\frac{1}{2}}^n \right)\left(v\, \frac{M_{i+1}^n-M_{i}^n}{\Delta x}\right)\bigg)\bigg]\,.  
\end{align}
Rewrite the above equation (\ref{full_gL}) as
\begin{align}
&\displaystyle \left(\frac{g_{i+\frac{1}{2}}}{\sqrt{M}}\right)^{n+1} = 
\left(\mathbb I  - \frac{\Delta t}{2\varepsilon}(\beta_1^{n} + \beta_2^{n}) \widetilde P^{n+1}\right)^{-1}\bigg\{\frac{1}{\sqrt{M^{n+1}}}
\bigg[g_{i+\frac{1}{2}}^{n} - \Delta t  \left(\mathbb I - \Pi_{i+\frac{1}{2}}^n \right) \notag\\[8pt]
&\displaystyle \qquad\qquad\qquad \cdot\left(v^{+}\, \frac{g_{i+\frac{1}{2}}^{n}-g_{i-\frac{1}{2}}^{n}}{\Delta x}+ v^{-}\, \frac{g_{i+\frac{3}{2}}^{n}-g_{i+\frac{1}{2}}^{n}}{\Delta x}\right) + \Delta t\, \mathcal Q_{L}(g^n_{i+\frac{1}{2}}, g^n_{i+\frac{1}{2}})  \notag\\[8pt]
&\displaystyle\qquad\qquad\qquad + \frac{\Delta t}{\varepsilon}
\bigg(\frac{1}{2}\left(\mathcal Q_{L}(M_{i+\frac{1}{2}}^n+g_{i+\frac{1}{2}}^n, M_{i+\frac{1}{2}}^n+g_{i+\frac{1}{2}}^n) - \mathcal Q_{L}(M_{i+\frac{1}{2}}^n-g_{i+\frac{1}{2}}^n, M_{i+\frac{1}{2}}^n-g_{i+\frac{1}{2}}^n)\right)  \notag\\[8pt] 
&\displaystyle\qquad\qquad\qquad\qquad\label{full_g1} -\frac{1}{2}(\beta_1^n + \beta_2^n)\sqrt{M^n}
\widetilde P\, \frac{g_{i+\frac{1}{2}}^n}{\sqrt{M^n}}
- \left(\mathbb I - \Pi_{i+\frac{1}{2}}^n \right)\left(v\, \frac{M_{i+1}^n-M_{i}^n}{\Delta x}\right)\bigg)\bigg]\bigg\}\,. 
\end{align}
One can apply the Conjugate Gradient (CG) method to get $\left(\frac{g_{i+\frac{1}{2}}}{\sqrt{M}}\right)^{n+1}$, which is used in \cite{JinYan}. 
\\[2pt]

A second order discretization of $(\mathbb I - \Pi_{M^n})(v\, \partial_{x}g^n)$ can also be used as in (\ref{419}).

{\bf Velocity discretization of $\widetilde P$ }\, 
As was done in \cite{JinYan}, the discretization of $\tilde P$ in one dimension is given by
\begin{align}
&\displaystyle (\tilde P h)_{j}= \frac{1}{(\Delta v)^2}\frac{1}{\sqrt{M_j}}\bigg\{\sqrt{M_j M_{j+1}} \left(\left(\frac{h}{\sqrt{M}}\right)_{j+1}
- \left(\frac{h}{\sqrt{M}}\right)_{j}\right) - \sqrt{M_j M_{j-1}}\left(\left(\frac{h}{\sqrt{M}}\right)_{j} - \left(\frac{h}{\sqrt{M}}\right)_{j-1}\right)\bigg\} \notag\\[6pt]
&\displaystyle\qquad\quad = \frac{1}{(\Delta v)^2}\left(h_{j+1} -\frac{\sqrt{M_{j+1}} + \sqrt{M_{j-1}}}{\sqrt{M_j}}h_j + h_{j-1}\right). 
\end{align}
It is obvious that $\tilde P$ is symmetric. We discretize dimension-by-dimension in velocity space. 

\subsection{The Asymptotic-Preserving property of the scheme}

In this section, we investigate the formal fluid dynamics behavior
(for $\varepsilon \ll 1$) of the discretized numerical scheme given by (\ref{full_g}) and (\ref{full_U}) for the Boltzmann equation, in order to show that
the scheme is Asymptotic-Preserving (AP)\cite{jin1999efficient, jin2010asymptotic} in the fluid dynamic regime. For notation simplicity, rewrite the term 
$$ \mathcal L_{M_{i+\frac{1}{2}}^n}(g_{i+\frac{1}{2}}^n) = \frac{1}{2}\left(\mathcal Q(M_{i+\frac{1}{2}}^n+g_{i+\frac{1}{2}}^n, M_{i+\frac{1}{2}}^n+g_{i+\frac{1}{2}}^n) - \mathcal Q(M_{i+\frac{1}{2}}^n-g_{i+\frac{1}{2}}^n, M_{i+\frac{1}{2}}^n-g_{i+\frac{1}{2}}^n)\right). $$ 

From the right hand side of (\ref{full_g0}), one can see 
\begin{align}
&\displaystyle \mathcal L_{M_{i+\frac{1}{2}}^n}(g_{i+\frac{1}{2}}^n) + \frac{1}{2}(\beta_1^n + \beta_2^n)g_{i+\frac{1}{2}}^n 
 - \frac{1}{2}(\beta_1^{n+1}+\beta_2^{n+1})g_{i+\frac{1}{2}}^{n+1}  \notag \\[6pt]
&\displaystyle \label{AP0}  \qquad\qquad\qquad  - \left(\mathbb I - \Pi_{i+\frac{1}{2}}^n \right)\left(v\, \frac{M_{i+1}^n-M_{i}^n}{\Delta x}\right) = O(\varepsilon). 
\end{align}

We make the following assumptions similar to that in \cite{Filbet-Jin}: there exists a constant $C>0$ such that
\begin{equation}\label{assump1}
 |g^n| + \left|\frac{g^{n+1}-g^n}{\Delta t}\right|\leq C, 
\end{equation}
and 
\begin{equation}\label{assump2} 
|U^n| + \left|\frac{U^{n+1}-U^n}{\Delta t}\right| \leq C. 
\end{equation}
These are typical assumptions for AP schemes, since the fluid dynamic limit of the Boltzmann or FPL equation is not rigorously justified even in the continuous case when solutions admit singularities such as shocks. \\
Denote $\beta=\frac{1}{2}(\beta_1 + \beta_2)$, we have in (\ref{AP0}) 
\begin{align*}
&\displaystyle \text{term }I: =\frac{1}{2}(\beta_1^n + \beta_2^n)g_{i+\frac{1}{2}}^n 
- \frac{1}{2}(\beta_1^{n+1}+\beta_2^{n+1})g_{i+\frac{1}{2}}^{n+1}  = 
 \beta^n g_{i+\frac{1}{2}}^n - \beta^{n+1}g_{i+\frac{1}{2}}^{n+1}  \notag\\[4pt]
&\displaystyle\qquad\quad = \beta^{n+1}(g_{i+\frac{1}{2}}^n - g_{i+\frac{1}{2}}^{n+1}) + 
(\beta^n - \beta^{n+1})g_{i+\frac{1}{2}}^n. 
\end{align*}
Under the assumption (\ref{assump1}) and (\ref{assump2}), and since $\beta^n$ only depends on $U^n$, one gets
$$ \left| \text{term }I\right| = O(\Delta t). $$
From (\ref{AP0}), $g_{i+\frac{1}{2}}^n$ is approximated by
\begin{equation}\label{g_AP_temp} g_{i+\frac{1}{2}}^n = \mathcal L_{M_{i+\frac{1}{2}}^n}^{-1}\bigg\{\left(\mathbb I - \Pi_{i+\frac{1}{2}}^n \right)\left(v\, \frac{M_{i+1}^n-M_{i}^n}{\Delta x}\right)\bigg\}
+ O(\varepsilon) + O(\Delta t). \end{equation}
$g_{i+\frac{1}{2}}^{n+1}$ can be approximated by $g_{i+\frac{1}{2}}^n + O(\Delta t)$, 
thus 
\begin{equation}\label{g_AP} g_{i+\frac{1}{2}}^{n+1} = \mathcal L_{M_{i+\frac{1}{2}}^n}^{-1}\bigg\{\left(\mathbb I - \Pi_{i+\frac{1}{2}}^n \right)\left(v\, \frac{M_{i+1}^n-M_{i}^n}{\Delta x}\right)\bigg\}
+ O(\varepsilon) + O(\Delta t).
\end{equation}
Plug (\ref{g_AP}) into (\ref{full_U}), 
\begin{align}
&\displaystyle  \frac{U_i^{n+1}-U_i^n}{\Delta t} + \frac{F_{i+\frac{1}{2}}(U^n)-F_{i-\frac{1}{2}}(U^n)}{\Delta x}
= \frac{\varepsilon}{\Delta x}\bigg\langle  v m \bigg\{ \mathcal L_{M_{i+\frac{1}{2}}^n}^{-1}\left[\left(\mathbb I - \Pi_{i+\frac{1}{2}}^n\right)\left(v\, \frac{M_{i+1}^n - M_i^n}{\Delta x}\right)\right]  \notag\\[6pt]
&\displaystyle \label{U_AP} - \mathcal L_{M_{i -\frac{1}{2}}^n}^{-1}\left[\left(\mathbb I - \Pi_{i -\frac{1}{2}}^n\right)\left(v\, \frac{M_i^n - M_{i-1}^n}{\Delta x}\right)\right]\bigg\} \bigg\rangle + O(\varepsilon\Delta t + \varepsilon^2).
\end{align}
Following the same calculation as \cite{MM-Lemou, Filbet-Jin}, one obtains 
$$ (\mathbb I - \Pi_{M})(v\cdot\nabla_x M) = 
\left(B: \left(\nabla_x u + (\nabla_x u)^{T}-\frac{2}{d}(\nabla_x \cdot u)\mathbb I\right) + A \cdot \frac{\nabla_x T}{\sqrt{T}}\right) M + O(\varepsilon), $$ 
where 
$$ A = \left(\frac{|v-u|^2}{2T}-\frac{d+2}{2}\right)\frac{v-u}{\sqrt{T}}, \qquad
B= \frac{1}{2}\left(\frac{(v-u)\otimes(v-u)}{2T}-\frac{|v-u|^2}{dT}\mathbb I\right). $$

Therefore, 
$$ \mathcal L_{M^n}^{-1}\bigg((\mathbb I - \Pi_{M^n})(v\cdot\nabla_x M^n)\bigg)
= \mathcal L_{M^n}^{-1}(BM): \left(\nabla_x u + (\nabla_x u)^{T}-\frac{2}{d}(\nabla_x\cdot u)\mathbb I\right) 
+ \mathcal L_{M^n}^{-1}(AM)\cdot\frac{\nabla_x T}{\sqrt{T}}. $$
Thus (\ref{U_AP}) is a consistent time discretization scheme to the compressible Navier--Stokes 
system, with the order of $\varepsilon$ term given by 
$$\varepsilon \nabla_x\cdot \begin{pmatrix} 0 \\ \mu \sigma(u) \\ \mu \sigma(u)u + \kappa\nabla_x T \end{pmatrix}, $$
with 
$$ \sigma(u)=\nabla_x u + (\nabla_x u)^{T} - \frac{2}{d}\nabla_x \cdot u\mathbb I, $$
where the viscosity $\mu=\mu(T)$ and the thermal conductivity 
$\kappa=\kappa(T)$ only depend on the temperature and whose general expressions can be found in 
\cite{Bardos}. 

We summarize the conclusions in the following theorem. Compared to Proposition 4.3 in \cite{MM-Lemou},  the result here is valid for 
the full Boltzmann instead of the BGK equation. 
\begin{theorem}

Consider the time and space discretizations of the Boltzmann equation, given by equation (\ref{full_g}) and (\ref{full_U}), then 

(i) In the limit $\varepsilon\to 0$, the moments $U^n$ satisfy the following discretization of the Euler equations
$$ \frac{U_i^{n+1}-U_i^n}{\Delta t} + \frac{F_{i+\frac{1}{2}}(U^n)-F_{i-\frac{1}{2}}(U^n)}{\Delta x} = 0. $$

(ii) The scheme (\ref{full_g}) and (\ref{full_U}) is asymptotically equivalent, with an error of $O(\varepsilon^2)$, to the following scheme, 
 \begin{align*}
&\displaystyle  \frac{U_i^{n+1}-U_i^n}{\Delta t} + \frac{F_{i+\frac{1}{2}}(U^n)-F_{i-\frac{1}{2}}(U^n)}{\Delta x}
= \frac{\varepsilon}{\Delta x}\bigg\langle  v m \bigg\{ \mathcal L_{M_{i+\frac{1}{2}}^n}^{-1}\left[\left(\mathbb I - \Pi_{i+\frac{1}{2}}^n\right)\left(v\, \frac{M_{i+1}^n - M_i^n}{\Delta x}\right)\right]  \notag\\[6pt]
&\displaystyle \label{U_AP} \qquad\qquad\qquad\qquad\qquad\qquad\qquad\qquad\qquad\qquad - \mathcal L_{M_{i -\frac{1}{2}}^n}^{-1}\left[\left(\mathbb I - \Pi_{i -\frac{1}{2}}^n\right)\left(v\, \frac{M_i^n - M_{i-1}^n}{\Delta x}\right)\right]\bigg\} \bigg\rangle, 
 \end{align*}
which is a consistent approximation of the compressible Navier-Stokes equation, provided that the viscous terms 
are resolved numerically.  
\end{theorem}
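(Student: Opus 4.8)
The plan is to obtain both assertions by carrying the expansion already assembled in this section one order further in $\varepsilon$ and substituting into the discrete conservation law (\ref{full_U}); no estimates beyond (\ref{assump1})--(\ref{assump2}) and the spectral structure of $\mathcal L_M$ recalled in Section \ref{sec:micro} should be needed. First I would multiply the microscopic scheme (\ref{full_g0}) through by $\varepsilon$ and collect the non-stiff contributions: the penalization difference ``term $I$'' is $O(\Delta t)$ since $\beta^n$ depends only on $U^n$ while $g^{n+1}-g^n$ and $U^{n+1}-U^n$ are $O(\Delta t)$, and the upwind transport term $(\mathbb I-\Pi^n_{i+1/2})(v\,\partial_x g^n)$ together with the quadratic term $\mathcal Q_B(g^n,g^n)$ are $O(1)$, hence of size $O(\varepsilon)$ after the factor $\varepsilon$. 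This reduces (\ref{AP0}) at leading order to $\mathcal L_{M^n_{i+1/2}}(g^n_{i+1/2}) = (\mathbb I-\Pi^n_{i+1/2})\big(v\,(M^n_{i+1}-M^n_i)/\Delta x\big) + O(\varepsilon)+O(\Delta t)$. Since $\mathcal L_M$ is a non-positive self-adjoint operator on $L^2_M$ with kernel $\mathcal N(\mathcal L_M)$, and the right-hand side lies in $\mathcal N(\mathcal L_M)^{\perp}=\mathrm{Range}(\mathcal L_M)$ by construction of $\Pi_M$, the pseudo-inverse $\mathcal L_M^{-1}$ on that range is well defined and bounded; this yields (\ref{g_AP}) for $g^n_{i+1/2}$, and since the one-step increment is $O(\Delta t)$, for $g^{n+1}_{i+1/2}$ as well.

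For part (i), I would insert (\ref{g_AP}) into (\ref{full_U}). Its right-hand side carries the prefactor $\varepsilon$ times an $O(1)$ bracket, so letting $\varepsilon\to0$ at fixed $\Delta t,\Delta x$ annihilates it and leaves precisely the stated first-order upwind discretization of the compressible Euler system (\ref{Euler}).

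For part (ii), one keeps the $O(\varepsilon)$ term instead of discarding it: substituting (\ref{g_AP}) into (\ref{full_U}) produces (\ref{U_AP}), whose remainder is $O(\varepsilon\Delta t+\varepsilon^2)$ because the $O(\Delta t)$ and $O(\varepsilon)$ errors inside the bracket of (\ref{g_AP}) are each multiplied by the outer $\varepsilon$; this is the claimed asymptotic equivalence. To identify the limiting scheme, I would invoke the Chapman--Enskog identity $(\mathbb I-\Pi_M)(v\cdot\nabla_x M)=\big(B:\sigma(u)+A\cdot\nabla_x T/\sqrt T\big)M+O(\varepsilon)$, with $A,B$ as above, derived exactly as in \cite{MM-Lemou,Filbet-Jin}. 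Applying $\mathcal L_M^{-1}$ term by term gives the Newtonian stress and Fourier heat-flux closures with transport coefficients $\mu(T),\kappa(T)$, so the central difference quotients in (\ref{U_AP}) form a consistent approximation of $\varepsilon\nabla_x\cdot(0,\ \mu\sigma(u),\ \mu\sigma(u)u+\kappa\nabla_x T)^{T}$, i.e.\ of the compressible Navier--Stokes equations, provided $\Delta x$ resolves the $O(\varepsilon)$ viscous layer.

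The hard part will be the first step: justifying that $\mathcal L_M^{-1}$ is a bounded operator on the orthogonal complement of its kernel for the actual Boltzmann and Landau collision kernels under consideration, and that the formal $O(\varepsilon)$ and $O(\Delta t)$ remainders in (\ref{AP0}) are genuinely uniform in the grid parameters — this is where the a priori bounds (\ref{assump1})--(\ref{assump2}) and the Fredholm structure recalled in Section \ref{sec:micro} do the real work. Everything afterward is algebraic substitution plus the classical Chapman--Enskog computation; the point of the micro-macro form is that the macroscopic update (\ref{full_U}) is already conservative, so no discrete conservation defect contaminates the Euler or Navier--Stokes limit.
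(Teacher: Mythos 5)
Your proposal is correct and follows essentially the same route as the paper: bound the penalization difference (``term $I$'') by $O(\Delta t)$ using (\ref{assump1})--(\ref{assump2}), invert $\mathcal L_{M^n}$ on the range guaranteed by $\mathbb I-\Pi_{M^n}$ to obtain (\ref{g_AP}), substitute into (\ref{full_U}) to get the $O(\varepsilon\Delta t+\varepsilon^2)$ equivalence, and identify the Navier--Stokes closure via the Chapman--Enskog identity for $(\mathbb I-\Pi_M)(v\cdot\nabla_x M)$. The one point you make explicit that the paper leaves implicit --- the Fredholm solvability condition and boundedness of $\mathcal L_M^{-1}$ on $\mathcal N(\mathcal L_M)^{\perp}$ --- is a welcome clarification but not a different argument.
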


From (ii), it shows that one needs the mesh size and time step to be $O(\varepsilon)$ in order to capture the
Navier-Stokes approximation. This is necessary for {\it any} scheme since the viscosity and heat conductivity are of $O(\varepsilon)$. 

\section{Numerical Implementation}
\label{Sec:Num}

We mention some details in the numerical implementation. 
Assume we have all the values of $U$ and $g$ at time $t^n$, namely
$g_{-\frac{1}{2}}^n, \cdots, g_{N+\frac{1}{2}}^n$, and $U_{-1}^n, \, U_{0}^n, \cdots, U_{N+1}^n, \, U_{N+2}^n$. 
\\[2pt]

(i) {\it Step 1}. $g$ is calculated at staggered grids  $x_{\frac{1}{2}}, \cdots, x_{N-\frac{1}{2}}$. 

We use equation (\ref{full_g}) for the Boltzmann or (\ref{full_gL}) for the Landau equation (with a rewritten form of (\ref{full_g1})). 
The projection operator is given in (\ref{Pie}). Here
the second choice is used. 
Denote $$M^{\ast} = \frac{M(U_i) + M(U_{i+1})}{2}, $$ by definition of $\Pi$, one has
$$ \Pi_{M^{\ast}}(\psi) = \frac{1}{\rho}\left[ \langle\psi\rangle + \frac{(v-u)\cdot \langle (v-u)\psi \rangle}{T} + 
\left(\frac{|v-u|^2}{2T} - \frac{d}{2}\right)\frac{2}{d}\bigg\langle\left(\frac{|v-u|^2}{2T} - \frac{d}{2}\right)\psi \bigg\rangle\right] M^{\ast},   $$
where $\rho$, $u$, $T$ are associated with $M^{\ast}$ as in (\ref{U_eqn}). 
If one assumes the periodic in $x$ boundary condition, then
\begin{equation}\label{g_BC} g_{-\frac{1}{2}} = g_{N-\frac{1}{2}}, \qquad 
g_{N+\frac{1}{2}} = g_{\frac{1}{2}}\,. \end{equation}
Free-flow boundary condition is used in the shock-tube tests, that is, 
\begin{equation}\label{g_BC1}  g_{-\frac{1}{2}} = g_{\frac{1}{2}},  \qquad 
g_{N+\frac{1}{2}} = g_{N-\frac{1}{2}}, \end{equation}
and similarly for $U$. 

(ii) {\it Step 2}. $U$ is calculated at $i=1, \cdots, N$, by using (\ref{full_U}), where values of $g_{\frac{1}{2}}^{n+1}, \cdots, g_{N+\frac{1}{2}}^{n+1}$ are used. 
The numerical flux $F$ is calculated by first or second order splitting with slope limiters. We apply a second-order TVD method. 
Following \cite{MM-Lemou}, we use a simple reconstruction of the upwind flux
$F_{i+\frac{1}{2}}(U^n)$ ($i=0, \cdots, N$) from the flux splitting that is naturally derived from its kinetic formulation:
\begin{equation}\label{flux_split} F(U) = \langle v^{+}m M(U)\rangle + \langle v^{-}m M(U)\rangle := F^{+}(U) + F^{-}(U). \end{equation}
A second order approximation of the positive and negative flux is obtained by a linear piecewise polynomial $\hat F_i$ for $i=0, \cdots, N+1$. 
Then we reconstruct the numerical flux $F_{i+\frac{1}{2}}(U)$ ($i=0, \cdots, N$) in a split form, 
\begin{equation}\label{FS_2} F_{i+\frac{1}{2}}^n =
F^{+}(U_i^n) + s_i^{+, n}\, \frac{\Delta x}{2} + F^{-}(U_{i+1}^n) - s_{i+1}^{-, n}\, \frac{\Delta x}{2}\,, \end{equation}
where a slope limiter $s_i^{\pm, n}$ is introduced to suppress possible spurious oscillations near discontinuities. 
We use a second order TVD minmod slope limiter \cite{LeVeque},
\begin{equation}\label{slope} s_i^{\pm, n} =\frac{1}{\Delta x}\, \text{minmod}\left\{F^{\pm}(U_{i+1}^n)-F^{\pm}(U_i^n), \, 
F^{\pm}(U_i^n)-F^{\pm}(U_{i-1}^n)\right\}. 
\end{equation}
Note that we need $F(U_{-1}), \, F(U_0), \, F(U_{N+1}),\,  F(U_{N+2})$ when computing
$s_0^{+}, \, s_{N+1}^{-}$, thus two ghost cells are needed. 
For periodic BC, we let
$$ U_0 = U_N, \qquad U_{-1}=U_{N-1}, \qquad U_{N+1}=U_1, \qquad U_{N+2}=U_2. $$
Implementation details of solving (\ref{U_discrete}) are shown in the Appendix. 

\section{Numerical Examples}
\label{sec:NE}

{\bf Test I: The micro-macro scheme for the Boltzmann equation}

Consider the spatial variable $x\in[0,1]$. Periodic boundary condition is used except for the shock tube tests. 
The velocity variable $v\in[-L_v,L_v]^2$ with $L_v=8.4$. $N_x=100$, $\Delta t = \Delta x/20$. 
Note that the velocity domain should be chosen large enough so that the numerical solution $f$ is essentially zero at its boundary. 
The fast spectral method in \cite{pareschi2000numerical} is applied to evaluate the collision operator $\mathcal Q$ and $32$ points are used in each velocity dimension. 

In order to compare different schemes, we denote by `FJ'  the Filbet-Jin AP method with penalty proposed in \cite{Filbet-Jin} for the Boltzmann 
equation; by `JY'  the Jin-Yan AP method with penalty in \cite{JinYan} for the Landau equation. 
`MM' stands for the micro-macro scheme for the full Boltzmann and Landau equations we propose in the current paper. 
`DS' represents a direct, explicit 4th order Runge-Kutta time discretization solver for the Boltzmann or Landau equations. 
\\[2pt]

{\bf Test I (a) } \\
The initial data is given by
\begin{equation}\label{Ia_IC} \rho^{0}(x)=\frac{2+\sin(2\pi x)}{3}, \qquad u^{0} = (0.2, 0), \qquad
T^{0}(x)=\frac{3+\cos(2\pi x)}{4}\,. \end{equation}
The following non-equilibrium double-peak initial distribution is considered, 
\begin{equation}\label{dp} f^{0}(x,v)=\frac{\rho^{0}}{4\pi T^{0}}\left(e^{-\frac{|v-u^{0}|^2}{2T^{0}}} 
+ e^{-\frac{|v+u^{0}|^2}{2T^{0}}}\right). \end{equation}

{\bf Test I (b) } \\
In this example, we consider a mixed regime with the Knudsen number 
$\varepsilon$ varying in space, where $x\in[0,1]$, $v\in[-6,6]^2$, 
\begin{align}
&\displaystyle
\varepsilon(x) =
\begin{cases}   
10^{-2} + \frac{1}{2}\left(\tanh(25-20x)+\tanh(-5+20x)\right), \qquad x\leq 0.65, \\[4pt]
10^{-2}, \qquad x>0.65. 
\end{cases}
\end{align}
The initial data is given by (\ref{Ia_IC})--(\ref{dp}). 
\\[2pt]

{\bf Test I (c).}
We study a Sod shock tube test problem for the Boltzmann equation. The equilibrium initial distribution is given by
$$ f^{0}(x,v) = \frac{\rho^{0}}{2\pi T^{0}} e^{-\frac{|v-u^{0}|^2}{2 T^{0}}},$$
where the initial data for $\rho^{0}$, $u^{0}$ and $T^{0}$ are given by
\begin{align}
\begin{cases}
&\displaystyle\rho_{l}=1, \qquad  u_{l}=(0,0), \qquad T_{l}=1, \qquad x\leq 0.5, \\[2pt]
&\displaystyle\rho_{r}=0.125, \qquad u_{r}=(0,0), \qquad T_{r}=0.25,  \qquad x>0.5. 
\end{cases}
\end{align}
\\[2pt]
There are different choices of the free parameter $\beta$ in the penalty operators. We list below: 

{\it Choice 1. }
In the BGK penalty operator, $P = \beta(M-f)$, where $\beta$ is a positive constant chosen for stability. 
One can split the collision operator $\mathcal Q$ into the gaining part and the losing part 
$\mathcal Q(f,f) = \mathcal Q^{+} - f \mathcal Q^{-}$. 
In order to obtain positivity, it is sufficient to require $\beta > \mathcal Q^{-}$ (\cite{QinJin}).  
In our case, 
$$ \beta_1^n > \mathcal Q_1^{-}, \qquad \beta_2^n > \mathcal Q_2^{-}, $$
where $\mathcal Q_1$, $\mathcal Q_2$ represent the collision operator 
$\mathcal Q(M^n+g^n,M^n+g^n)$ and $\mathcal Q(M^n-g^n,M^n-g^n)$ respectively. 
Here $\beta_1^n$, $\beta_2^n$ are space and time dependent. 

{\it Choice 2. } Another choice is given in \cite{Filbet-Jin}, recall (\ref{penalty1}), we let 
\begin{align}
&\displaystyle \beta_1^n =\sup_{v}\left|\frac{\mathcal Q(M^n+g^n, M^n+g^n)}{g^n}\right|,  \\[4pt]
&\displaystyle \beta_2^n =\sup_{v} \left|\frac{\mathcal Q(M^n-g^n, M^n-g^n)}{g^n}\right|. 
\end{align}
\\[6pt]
\indent Now we present and compare numerical results using difference schemes. Property of conservation of moments will also be verified. 
In the figure titles, $P_0$, $P_1$, $P_2$ represent the mass, momentum (in $v_1$ direction) and the total energy respectively. 
For Test I (a), Figure \ref{TestI-1} (for $\varepsilon=1$)
and Figure \ref{TestI-2} (for $\varepsilon=10^{-4}$) show the time evolution of mass, momentum and energy obtained from $f$ (using $f=\mathcal M + \varepsilon g$),
denoted by `Mf' (see Remark \ref{rmk-cons}), and from solving the macroscopic equations, denoted by `ME' (moment equations) below. 
Figure \ref{TestI-1} uses `DS' and Figure~\ref{TestI-2} uses `MM' for small $\varepsilon$. 
One can observe that the moments calculated from `ME' are perfectly conserved with values unchanged as time propagates, while the conservation is not guaranteed if the moments are obtained from $f$ itself, however Figure~\ref{VP_Fig} later shows that the error in total energy conservation is bounded for long time. This phenomenon verifies the proof that moments solved from `ME' are conserved as shown 
in (\ref{U_Cons1}). Moments computed from $f$, although not exactly conserved, however owes an spectral accuracy due to the numerical error of the spectral method used for the collision operators. 
One observes that if {\it not} using the moments systems to obtain the conserved quantities, 
third moments usually own a larger error than lower (first and second) moments by using the same discretization,
a phenomenon that is also observed in several other tests in the following sections. The reason might be due to that the error in $f$ is enlarged more when multiplying by $|v|^2$ (instead of $1$ or $v$) in the integration to 
get third moments. 

In Section \ref{sec:7} we will use this idea to obtain the conservative solvers
for more general kinetic equations and for general numerical schemes, not just
the micro-macro decomposition based \cite{MM-Lemou} or penalty based 
\cite{Filbet-Jin, JinYan} approaches.

The density $\rho$, bulk velocity $u_1$ and temperature $T$ defined as the following: 
$$ \rho = \int_{\mathbb R^2} f dv, \, u_i = \frac{1}{\rho}\int_{\mathbb R^2} v_i f dv  (i = 1, 2), \, 
T = \frac{1}{2\rho}\int_{\mathbb R^2} (v-u)^2 f dv. $$
The numerical solutions are shown in Figure \ref{TestI-a-sol} for Test I (a). Here $u_2=0$ and we omit plotting it. MM uses the penalty parameters in {\it Choice 1}. One can observe that the two different approaches `FJ' and `MM' are consistent and produce the same results. 

For Test I (b), the function $\varepsilon$ is plotted in Figure 
\ref{fig_eps1}, whose values range from $10^{-2}$ to 1, and is discontinuous at $x=0.65$.
Figure \ref{TestI-b2} shows that by comparing with the `DS' solutions as a reference, 
`MM' is able to capture the macroscopic behavior efficiently with coarse mesh size and time steps when 
$\varepsilon$ is discontinuous, by using the penalty parameter $\beta$ in {\it Choice 1}. 

One can observe from Figure \ref{TestI-c}, for Test I (c), that the macroscopic quantities are well approximated although the mesh size and time steps are larger than 
$\varepsilon$, by using both the `FJ' and `MM' schemes, which give similar numerical results for the Sod problem.

\begin{figure}[H]
\begin{subfigure}{1\textwidth}
\includegraphics[width=1\textwidth, height=0.59\textwidth]{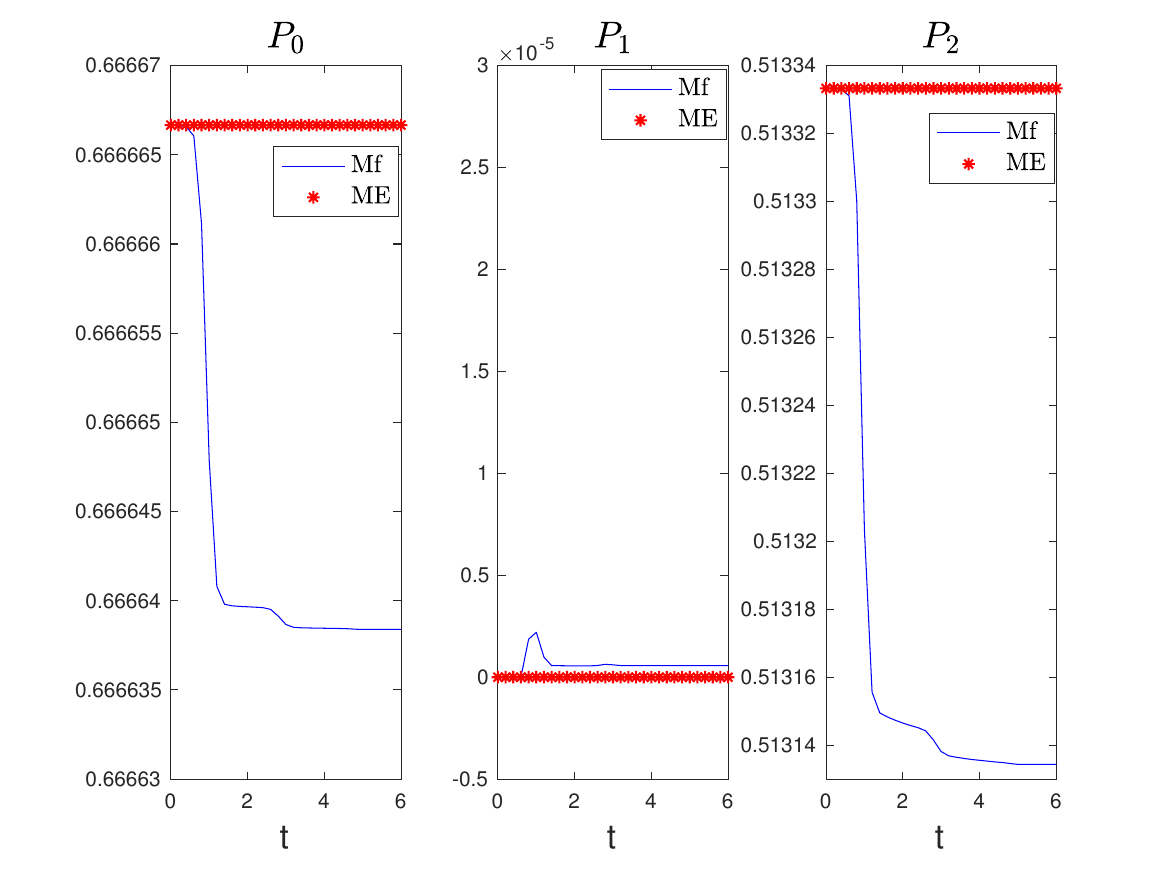}
 \end{subfigure}
  \caption{Test I (a). Time evolution of mass, momentum and energy by DS. 
 `Mf' versus `ME'. $\varepsilon=1$. }
\label{TestI-1}
\end{figure}

\begin{figure}[H]
\centering
\begin{subfigure}{1\textwidth}
\includegraphics[width=1\textwidth, height=0.59\textwidth]{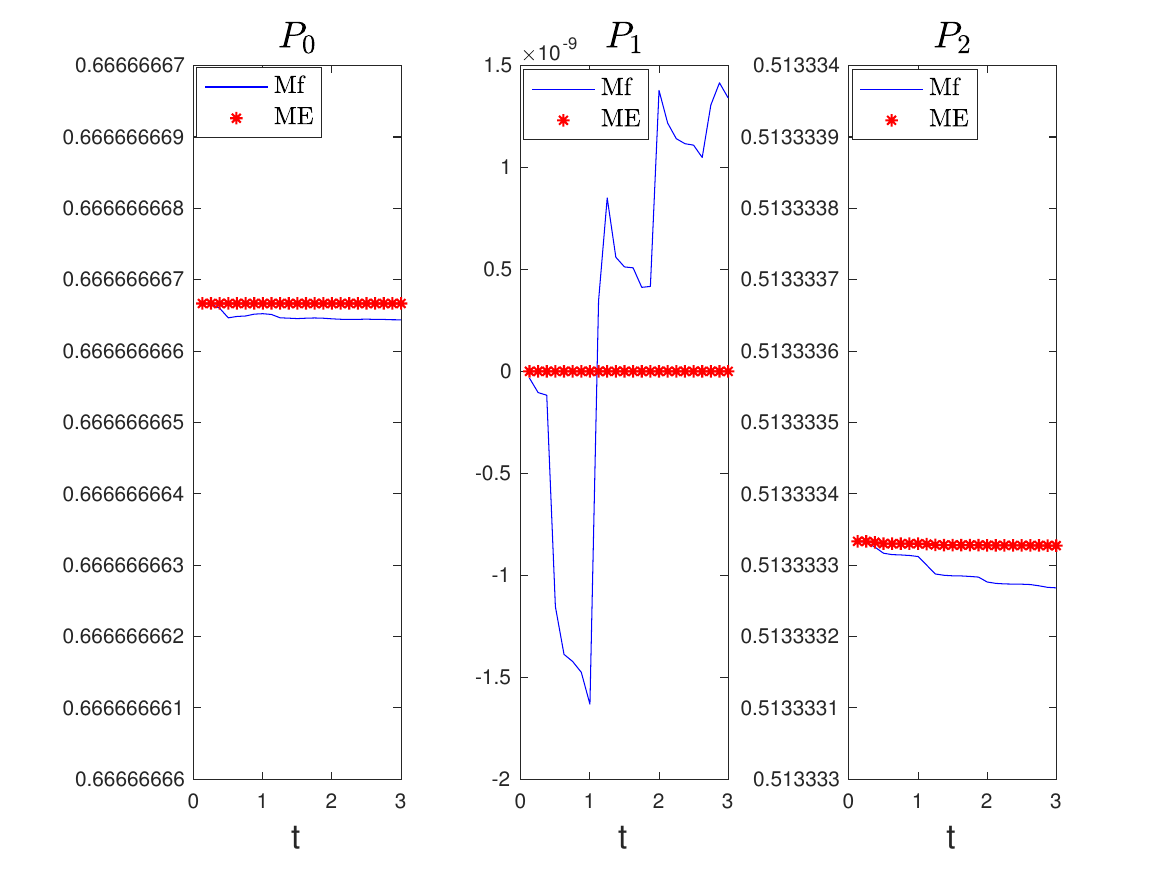}
\end{subfigure}
 \caption{Test I (a). Time evolution of mass, momentum and energy by MM:
`Mf' versus `ME'. $\varepsilon=10^{-4}$. }
\label{TestI-2}
\end{figure}

\begin{figure}[H]
\centering
\includegraphics[width=0.45\linewidth]{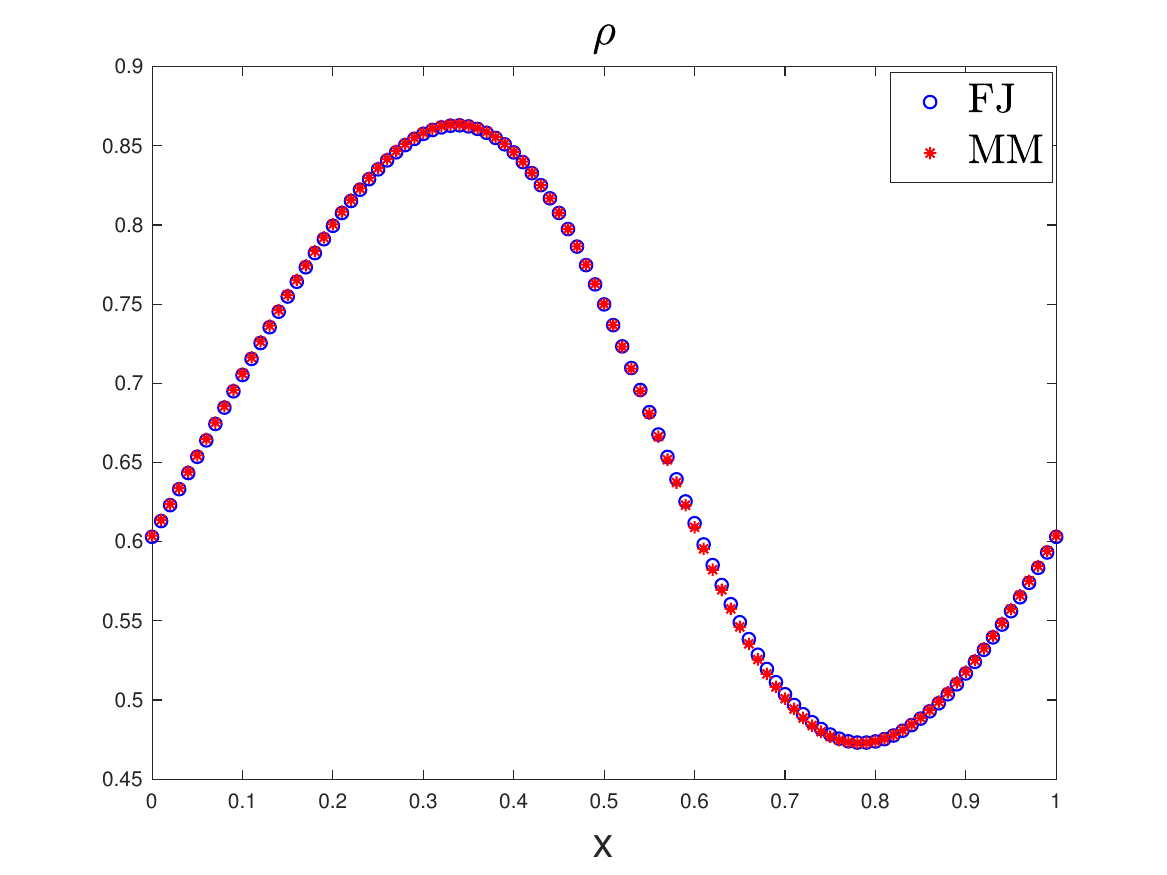}
\includegraphics[width=0.45\linewidth]{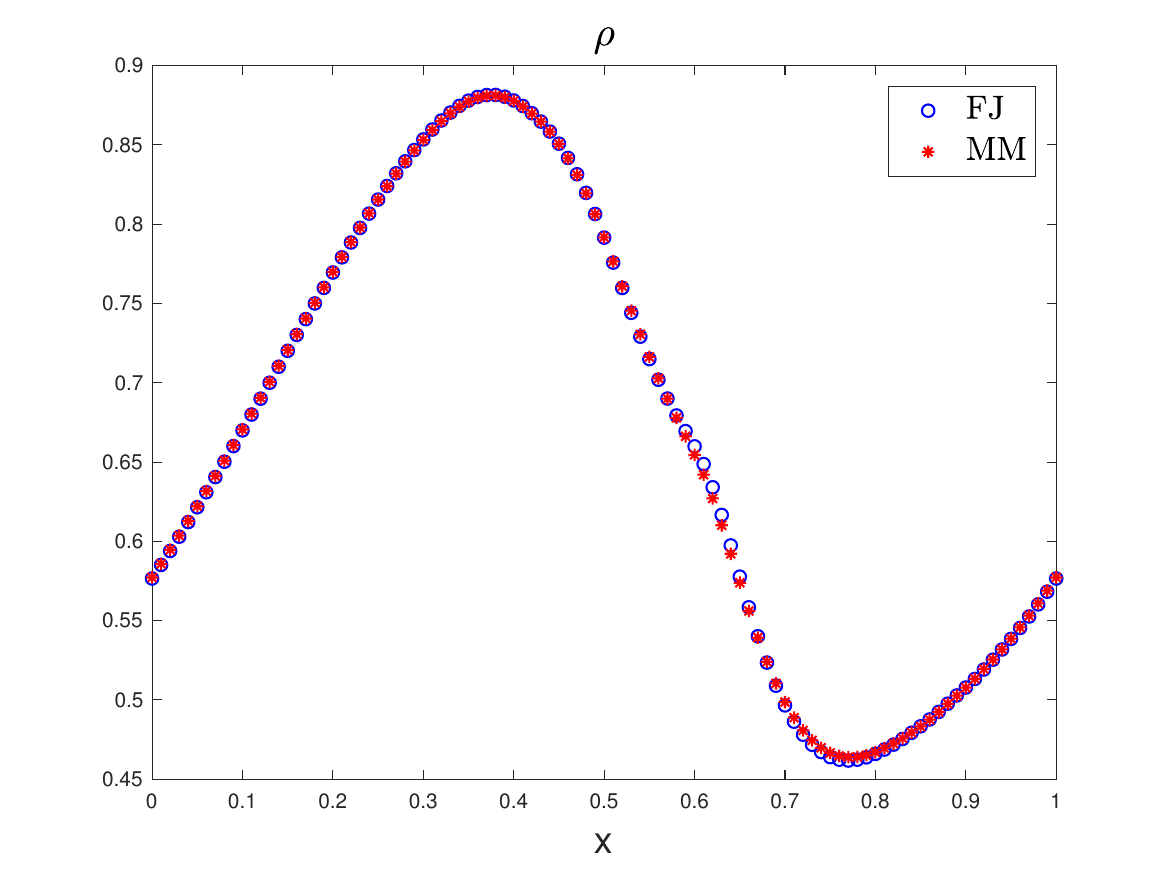}
\centering
\includegraphics[width=0.45\linewidth]{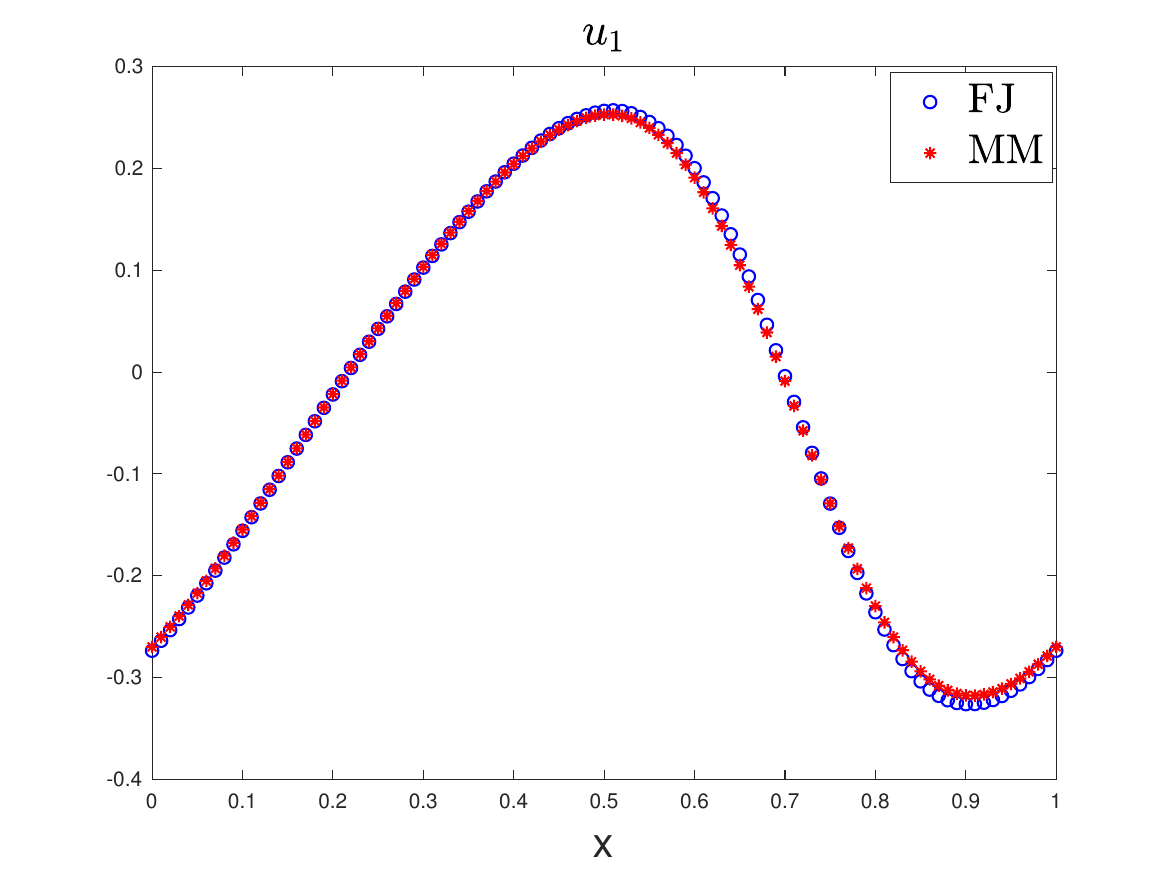}
\includegraphics[width=0.45\linewidth]{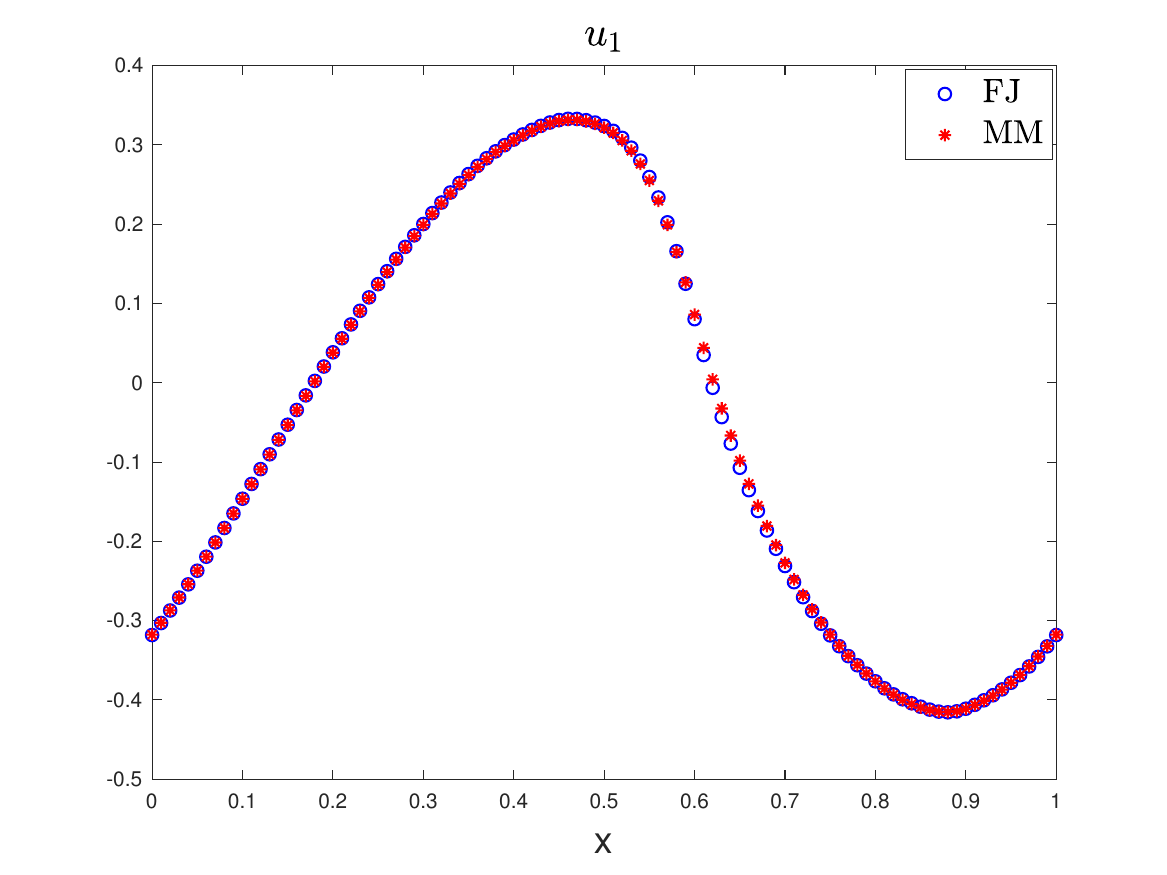}
\centering
\includegraphics[width=0.45\linewidth]{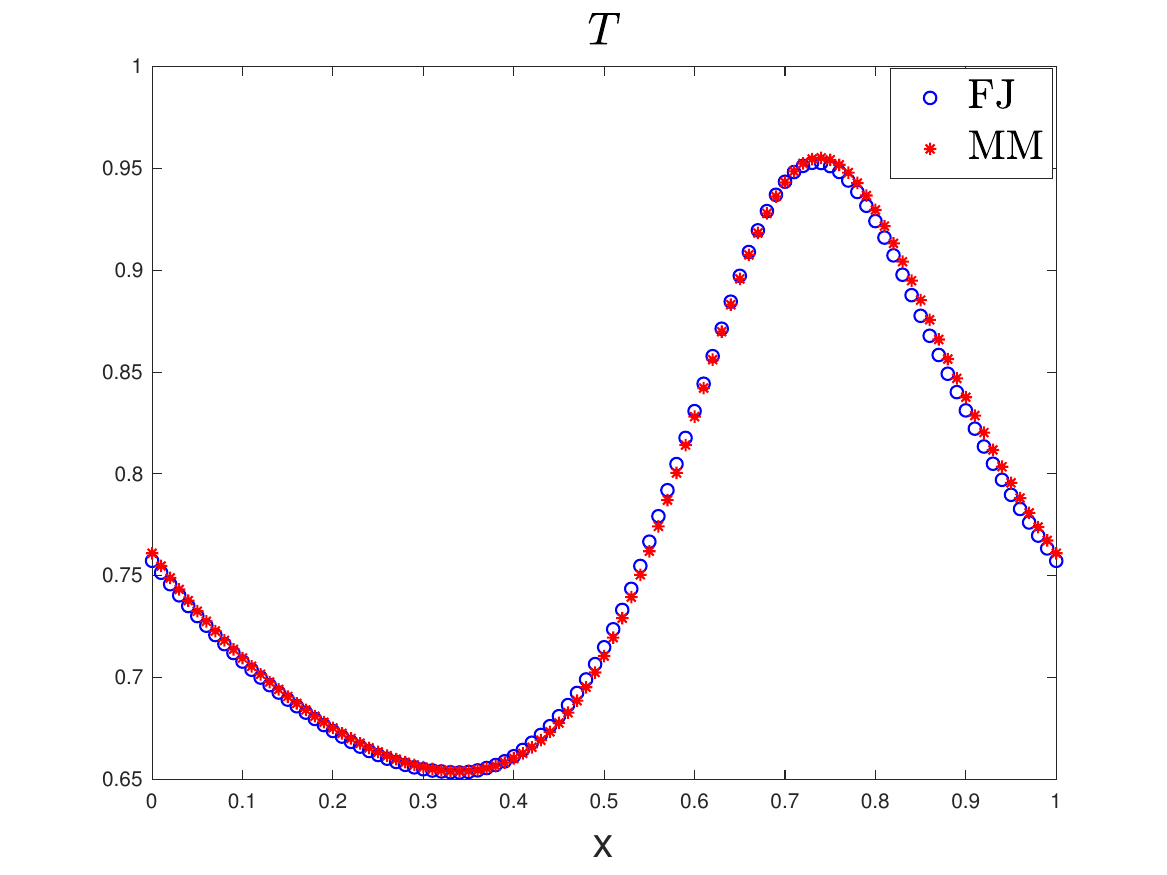}
\includegraphics[width=0.45\linewidth]{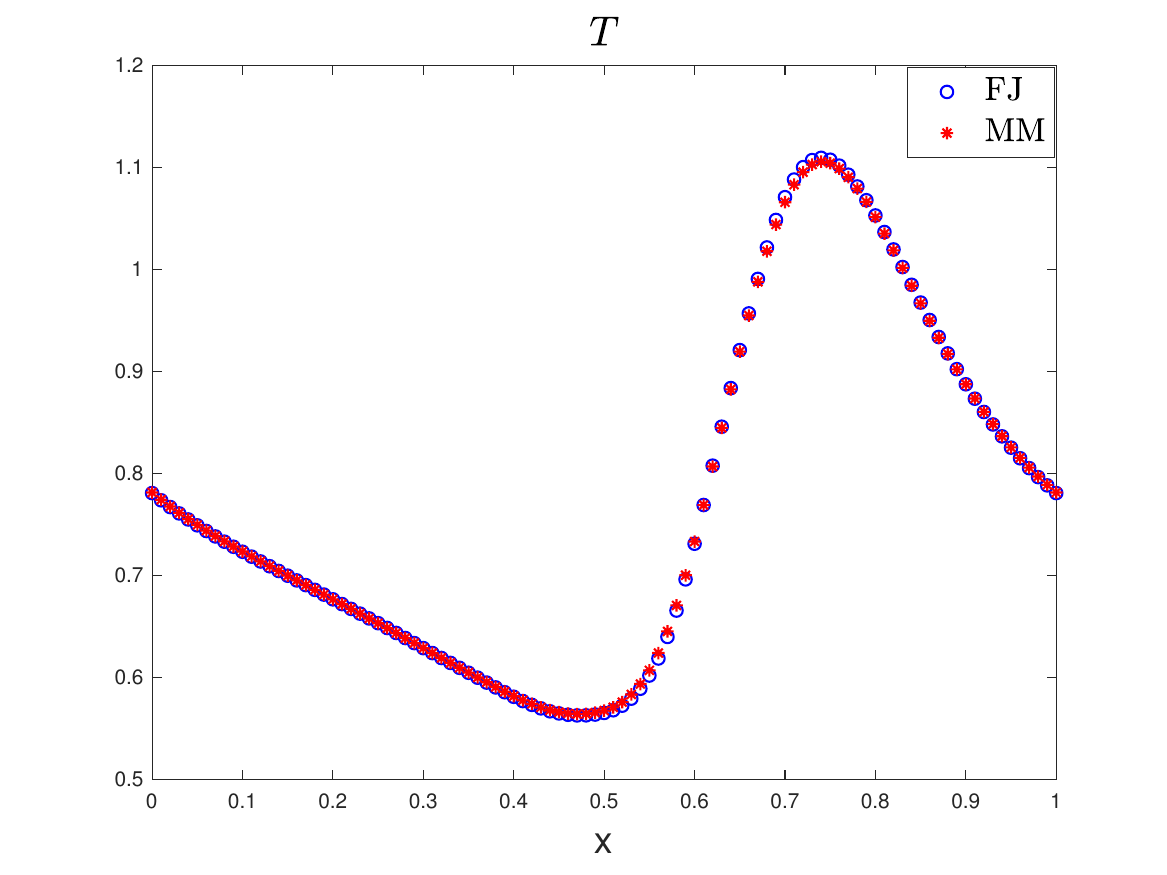}
\caption{Test I (a). $t=0.2$. 
First column: $\varepsilon=1$. Second column: $\varepsilon=10^{-3}$. }
\label{TestI-a-sol}
\end{figure}


\begin{figure}[H]
\centering
\includegraphics[width=0.49\linewidth]{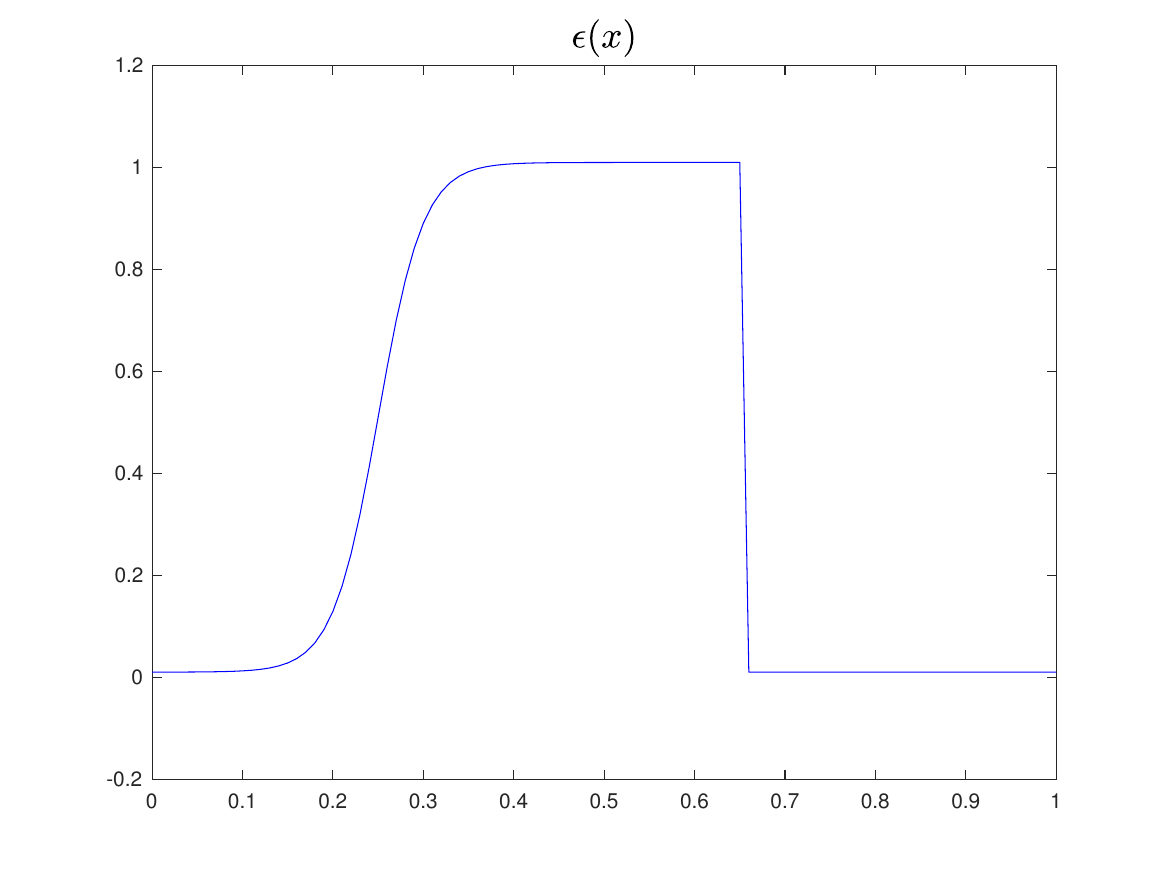}
\caption{A spatially varying Knudsen number $\varepsilon(x)$ for Test I (b). }
\label{fig_eps1}
\end{figure}

\begin{figure}[H]
\centering
\includegraphics[width=0.45\linewidth]{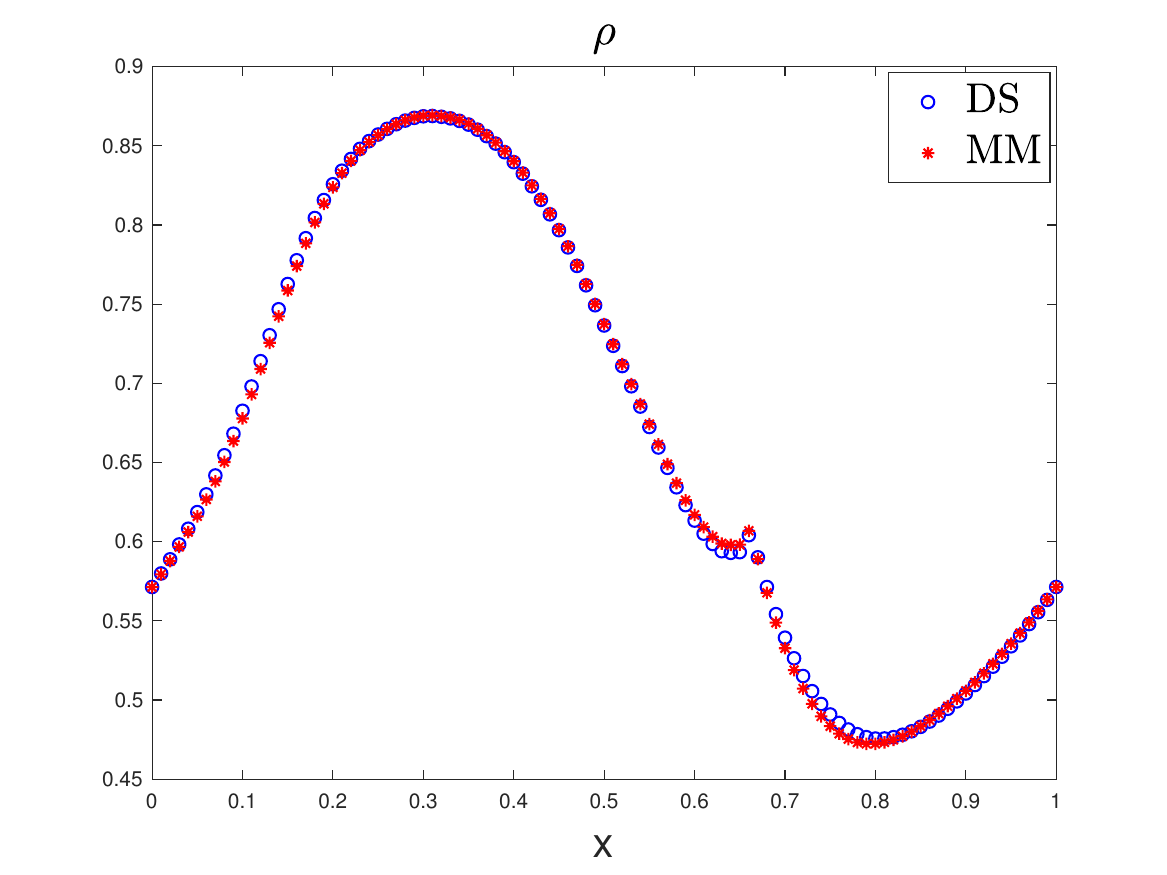}
\centering
\includegraphics[width=0.45\linewidth]{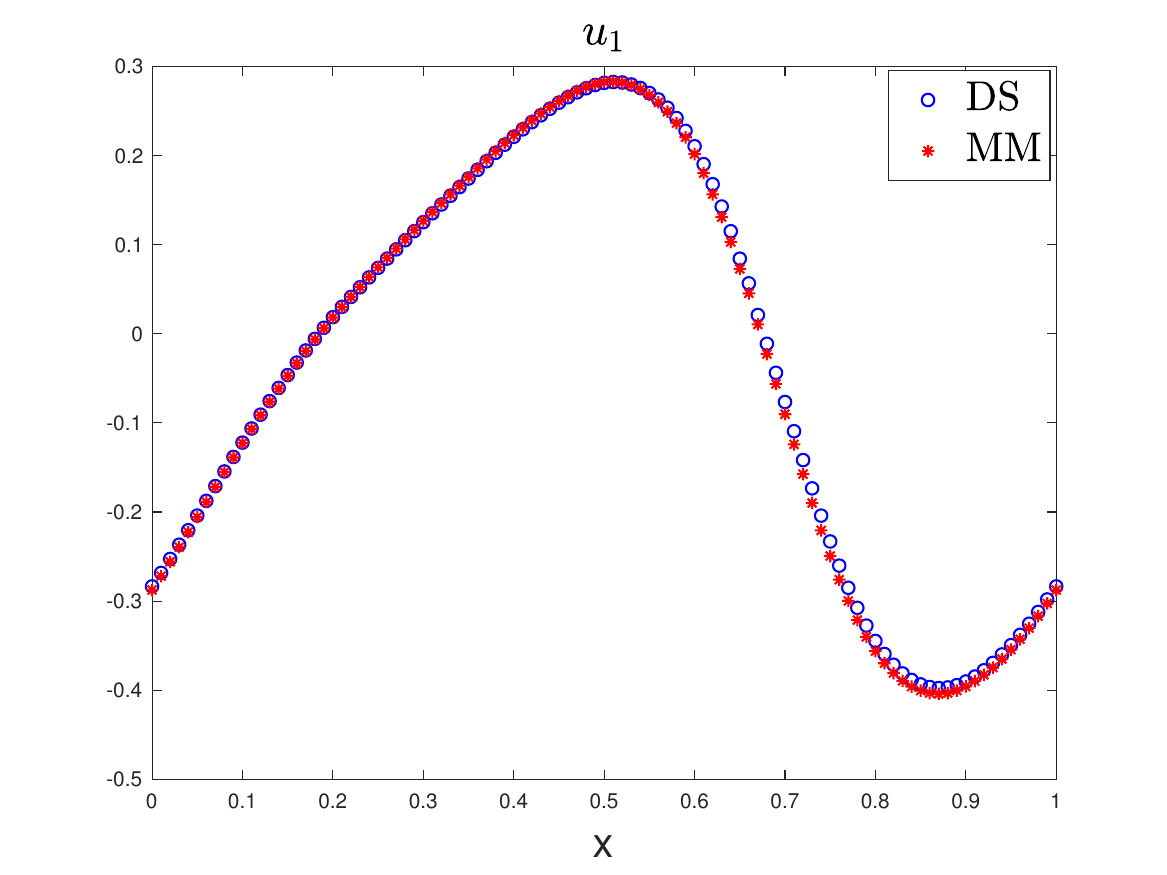}
\centering
\includegraphics[width=0.45\linewidth]{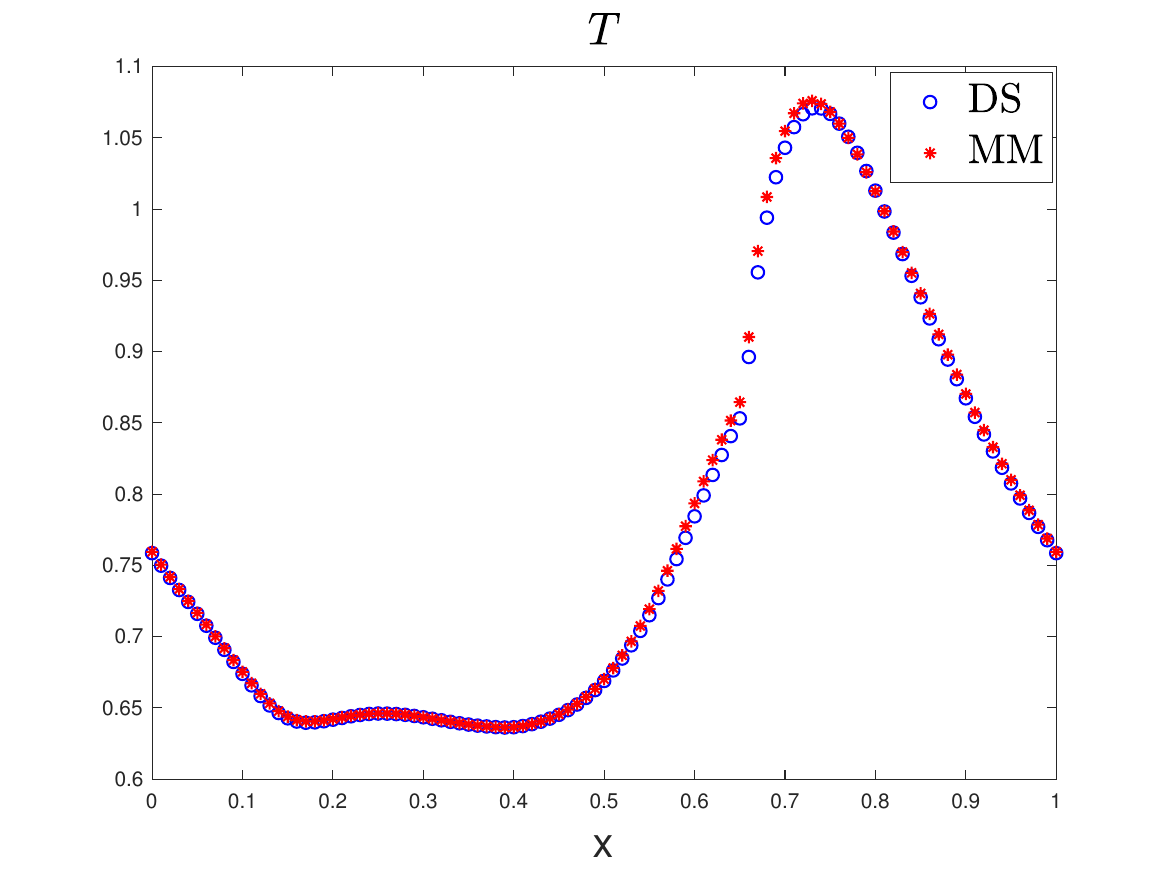}
\caption{Test I (b). Numerical solutions at $t=0.2$ by `DS' and `MM'. }
\label{TestI-b2}
\end{figure}


\begin{figure}[H]
\centering
\includegraphics[width=0.45\linewidth]{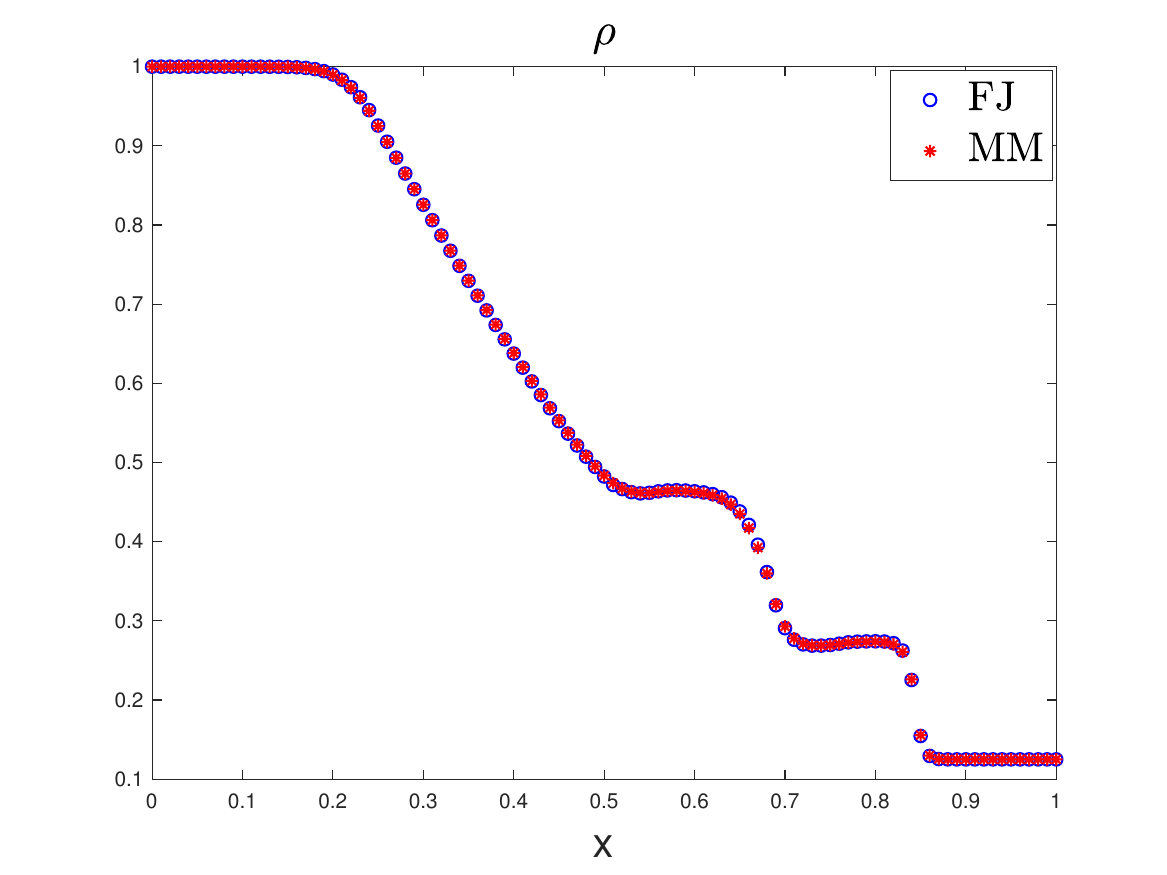}
\centering
\includegraphics[width=0.45\linewidth]{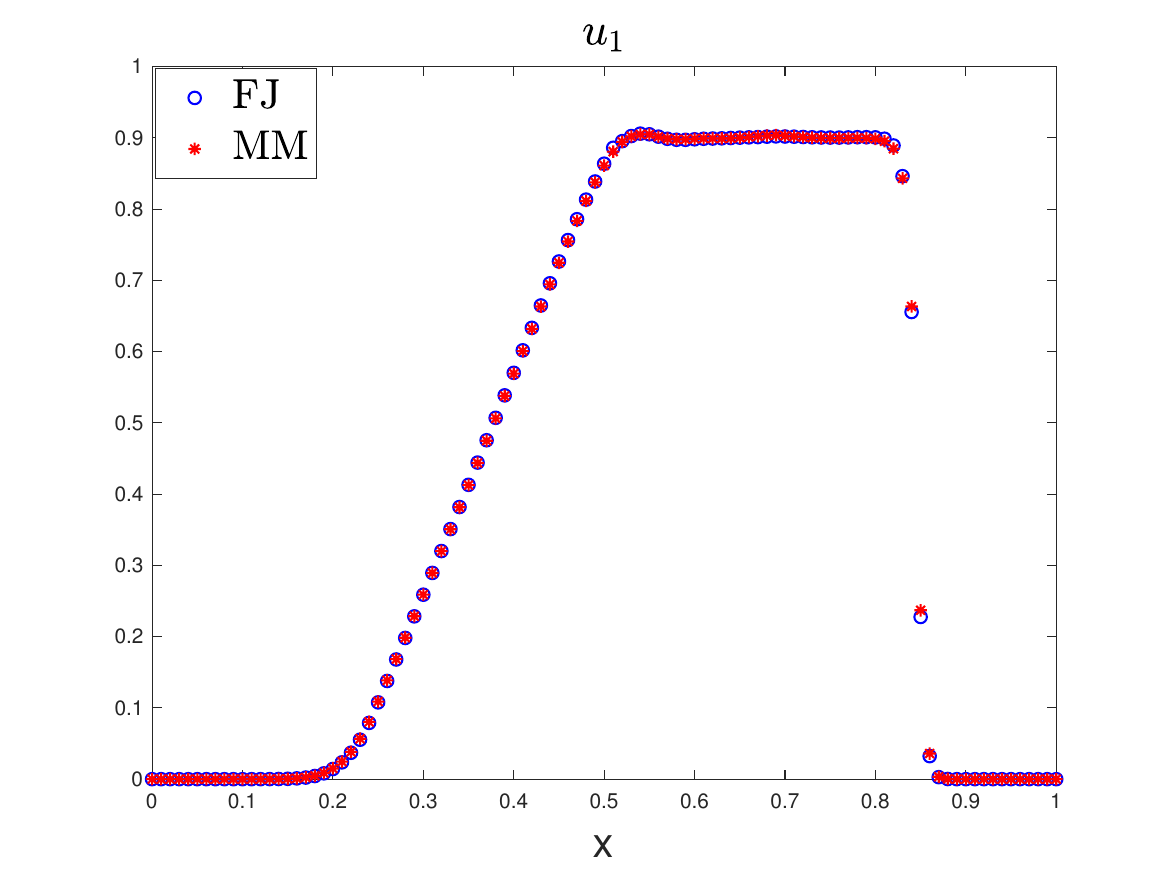}
\centering
\includegraphics[width=0.45\linewidth]{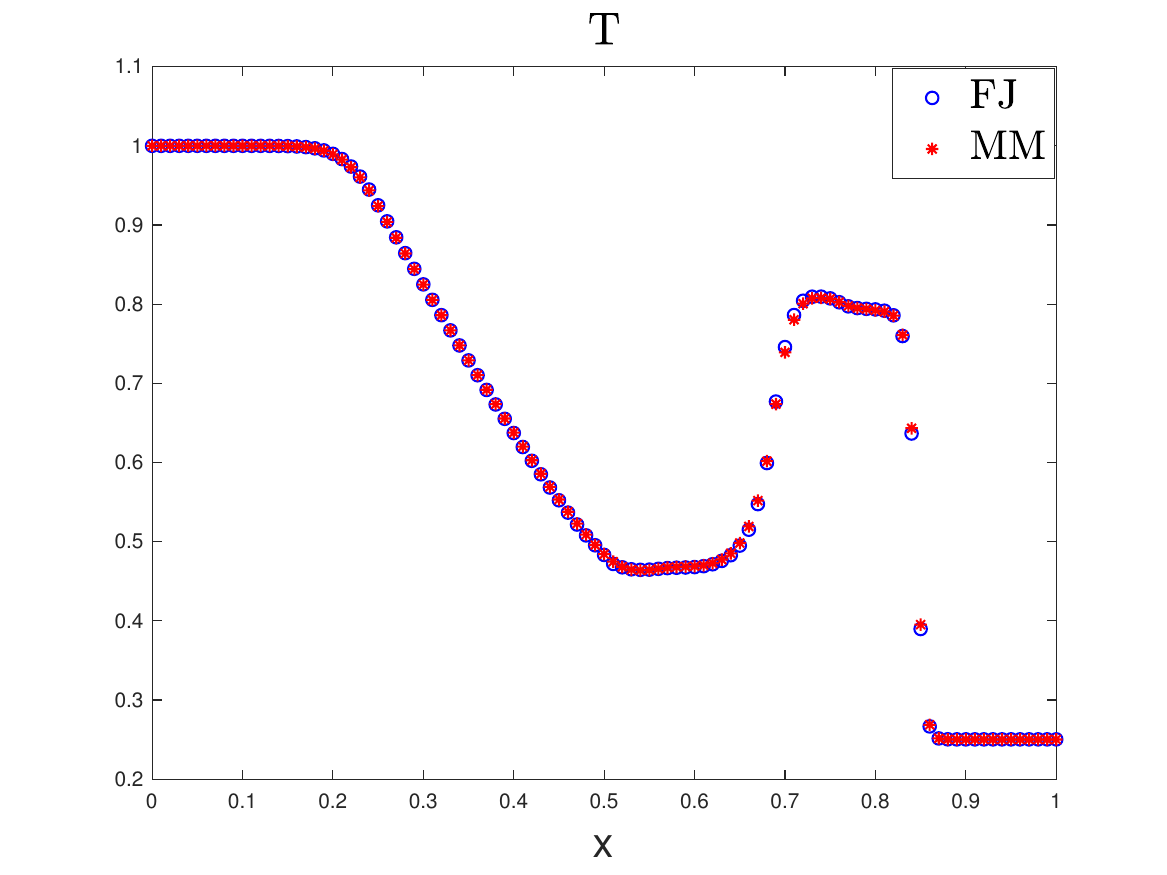}
\caption{Test I (c). `FJ' versus `MM'. Numerical solutions at $t=0.2$. $\varepsilon=10^{-4}$. }
\label{TestI-c}
\end{figure}


{\bf Test II: The micro-macro scheme for the nFPL equation}

{\bf Test II (a). }
The initial data is given by
$$ \rho^{0}(x)=\frac{2+\sin(\pi x)}{3}, \qquad u^{0} = (0.2, 0), \qquad
T^{0}(x)=\frac{3+\cos(\pi x)}{4}\,. $$
Consider the double-peak initial distribution (\ref{dp}). Let $x\in[-1,1]$, $v\in [-6,6]^2$ and $N_x=100$, $N_v=32$, $\Delta t = \Delta x/20$. 
$\varepsilon=1$. 

{\bf Test II (b). } \\
We consider a Sod shock tube test for the nFPL equation with an equilibrium initial distribution:
$$ f^{0}(x,v) = \frac{\rho^{0}}{2\pi T^{0}} e^{-\frac{|v-u^{0}|^2}{2 T^{0}}},$$
where the initial data for $\rho^{0}$, $u^{0}$ and $T^{0}$ are given by
\begin{align*}
&\displaystyle (\rho, u_1, T) = (1, 0, 1), \qquad\qquad \text{if  } -0.5 \leq x <0, \\[4pt]
&\displaystyle (\rho, u_1, T) = (0.125, 0, 0.25), \qquad \text{if  } 0 \leq x \leq 0.5. 
\end{align*}
Let $x\in[-0.5, 0.5]$, $v\in [-6,6]^2$ and $N_x=100$, $N_v=32$, $\Delta t = \Delta x/20$. $\varepsilon=10^{-3}$. 

Figure \ref{TestII-a} shows the numerical solutions of Test II (a) by `MM' compared with `DS', for both $\mathcal O(1)$ 
or moderately small $\varepsilon$, in good agreement. In Figure \ref{TestII-b} for Test II (b), one can see that the macroscopic quantities are well approximated 
for the Sod shock tube test for the nFPL equation, although
the mesh size and time steps are coarse, thus it verifies the AP property. 

\begin{figure}[H]
\centering
\includegraphics[width=0.4\linewidth]{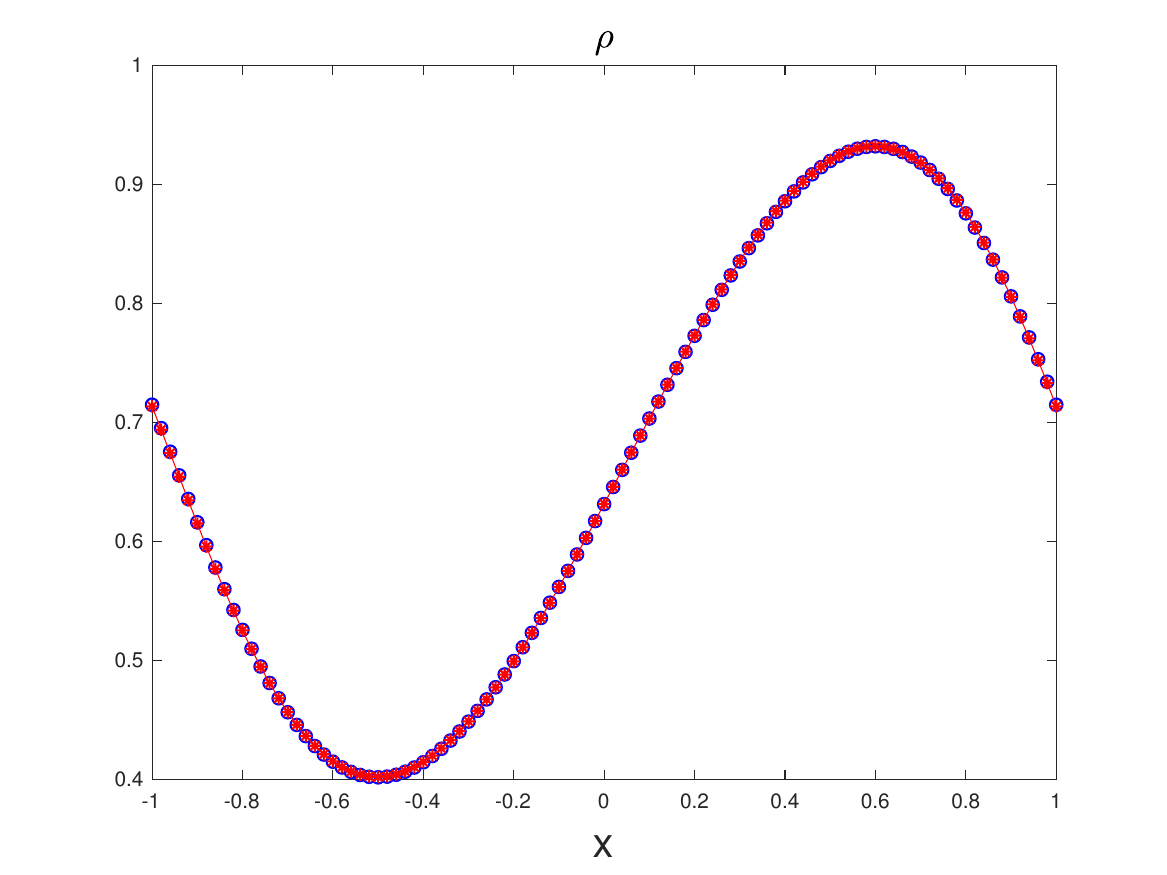}
\centering
\includegraphics[width=0.4\linewidth]{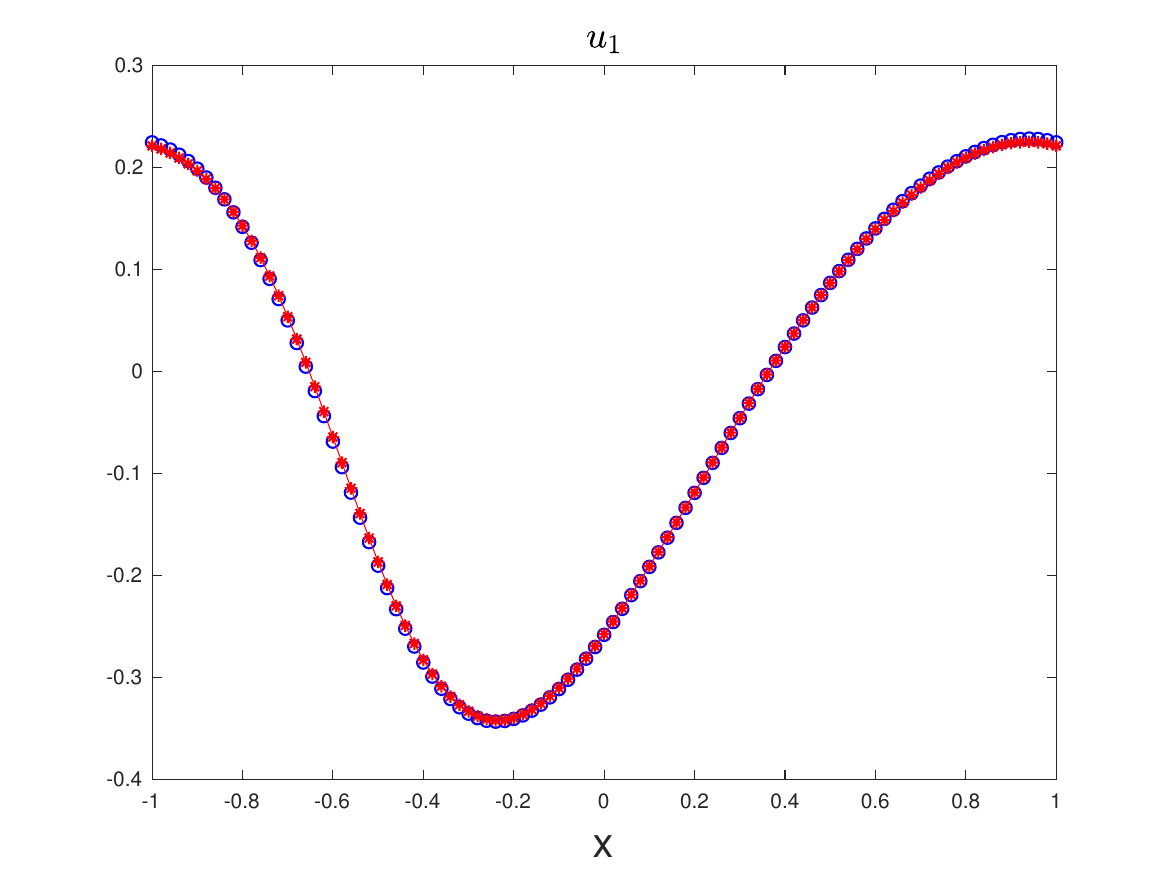}
\centering
\includegraphics[width=0.4\linewidth]{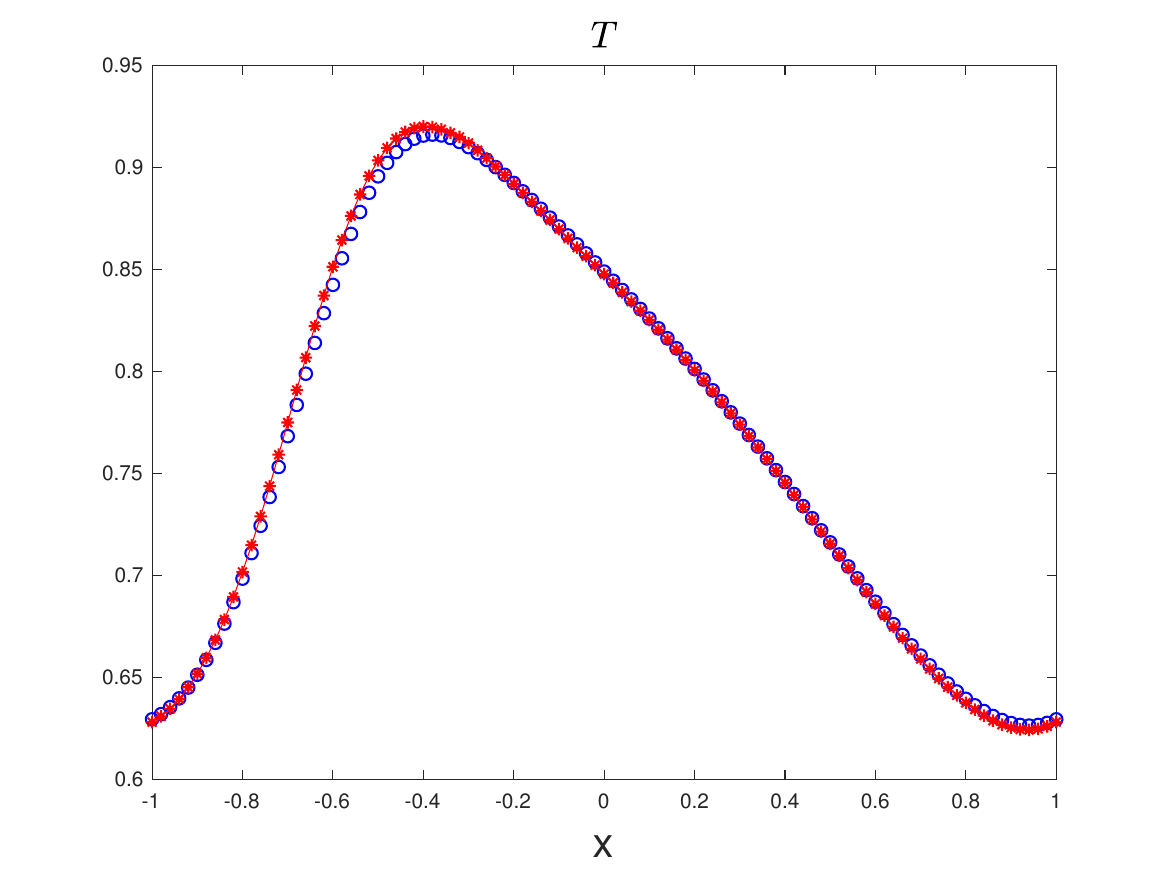}
\caption{Test II (a). Numerical solutions by `DS' (circle) and `MM' (asteroid). $\varepsilon=1$, $t=0.25$. 
}
\label{TestII-a}
\end{figure}

\begin{figure}[H]
\centering
\includegraphics[width=0.4\linewidth]{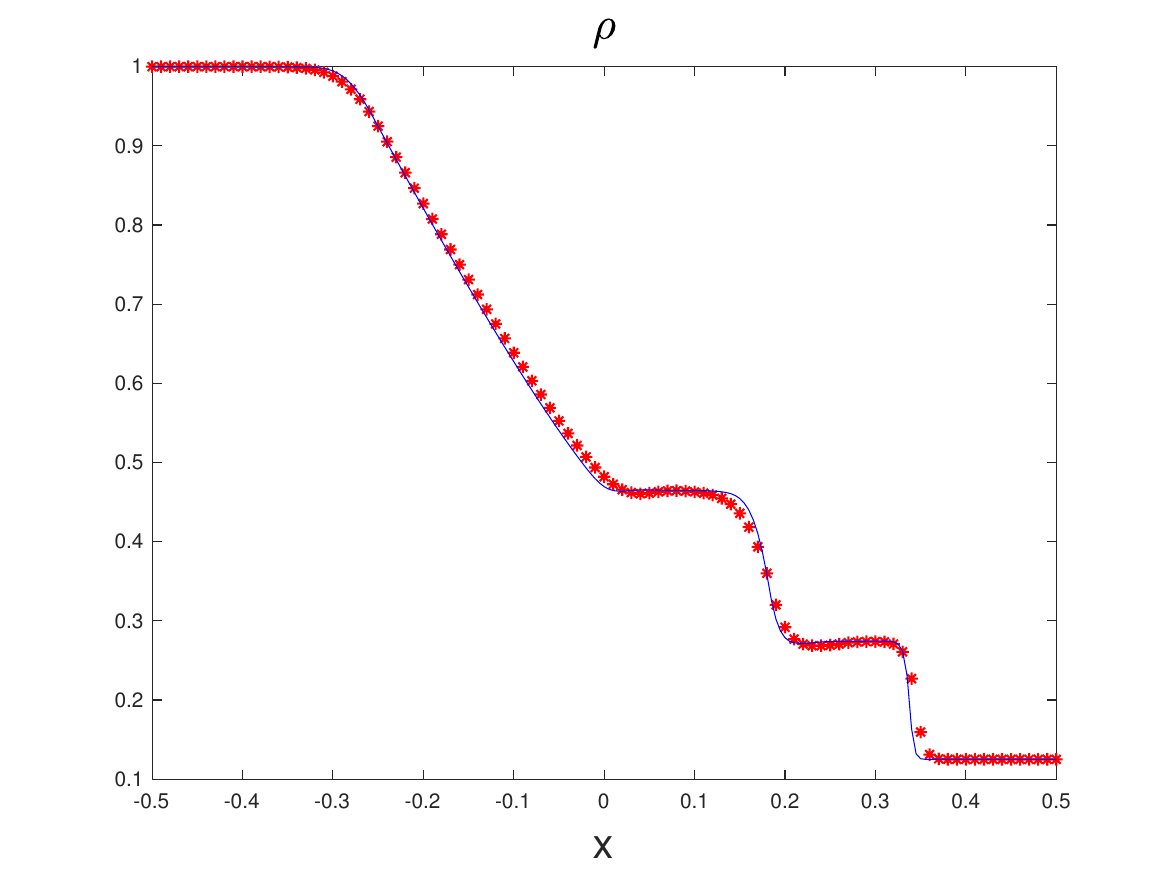}
\centering
\includegraphics[width=0.4\linewidth]{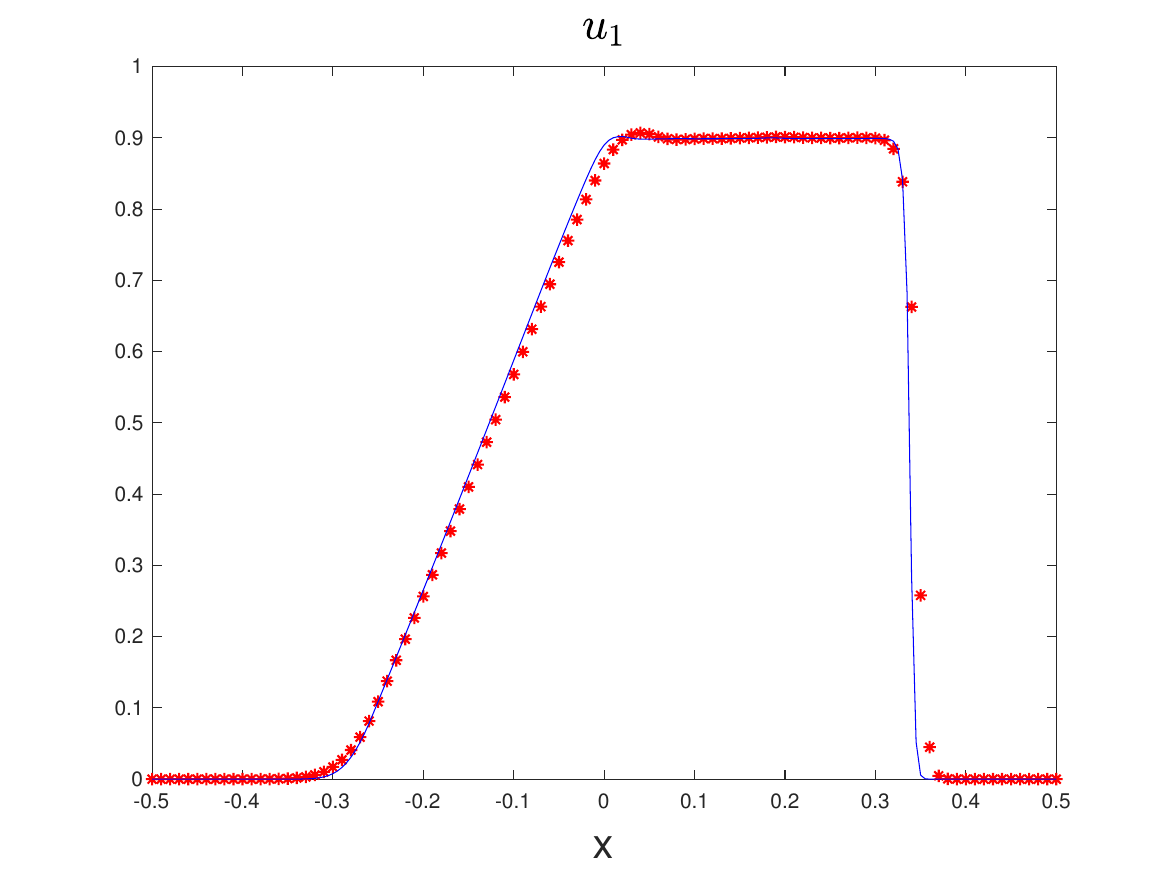}
\centering
\includegraphics[width=0.4\linewidth]{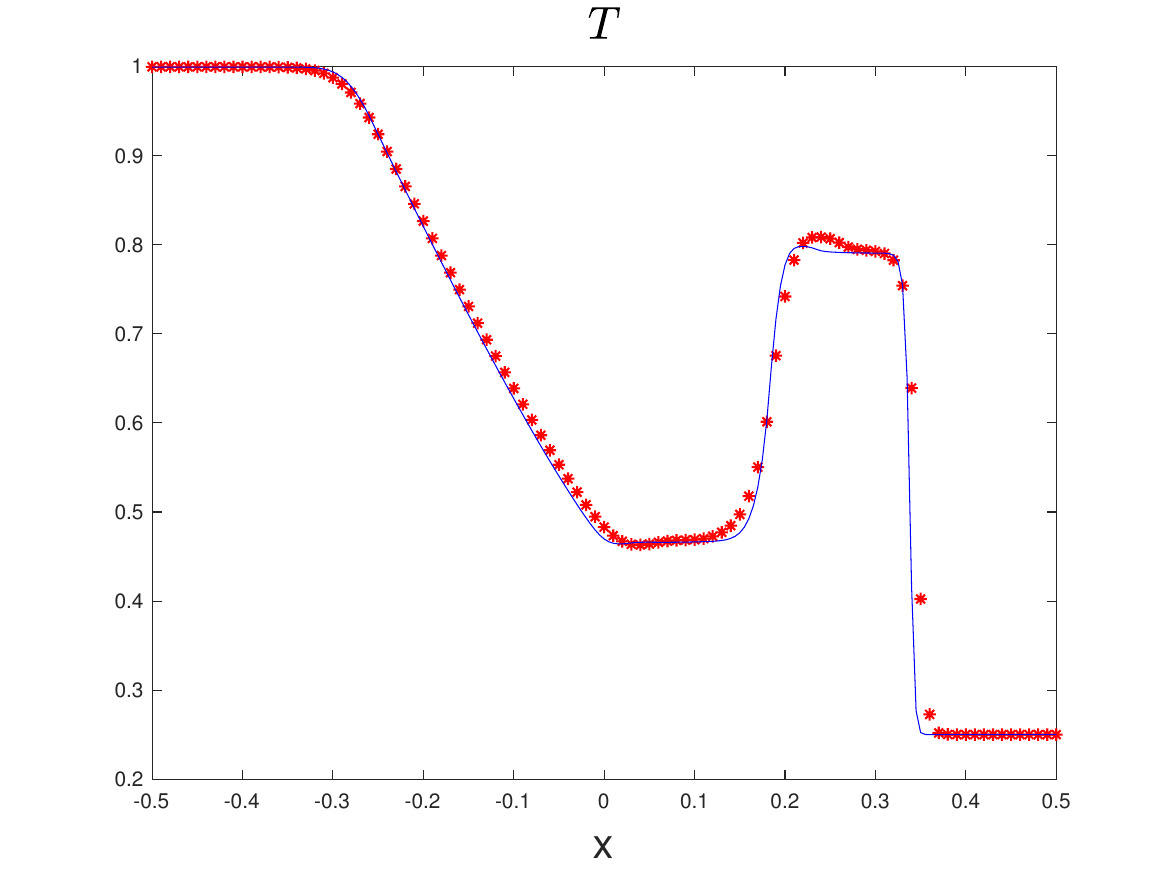}
\caption{Test II (b). Numerical solutions by `MM' (asteroid) and reference solutions by using a fine mesh of `MM' (solid line). 
$\varepsilon=10^{-3}$, $t=0.2$. }
\label{TestII-b}
\end{figure}


\section{A Conservative Scheme for the Vlasov-Amp\`{e}re-Boltzmann system}
\label{sec:7}

In order to further elaborate the issue of numerical conservative of moments
in kinetic solvers, in this section, we develop a conservative scheme for the Vlasov-Amp\`{e}re-Boltzmann system with or without the collisional term, 
which is not only of interest for the systems under study, but also gives a general guidance on how to obtain
numerically the exact conservation of moments for a general kinetic solver.

\subsection{The collisionless Vlasov-Poisson and Vlasov-Amp\`{e}re systems}
\label{Sec:VP}
First, we consider the Vlasov-Poisson (VP) system without collisions between particles, 
\begin{align}
\begin{cases}
&\displaystyle \partial_t f + v\cdot\nabla_x f - E\cdot\nabla_v f = 0\,, \\[4pt]
&\displaystyle\label{VP} \nabla_x\cdot E = c(x) - \int_{\mathbb R^d} f\, dv\,. \end{cases}
\end{align}
Here $E$ is the electric field, while $c(x)$ is the background density. The domain is given by $\Omega_{x,v} = \Omega\times\mathbb R^d$. 
This system arises in modeling collisionless plasmas \cite{Liboff}.
For simplicity, we will always assume periodic boundary condition in $x$ for $f$. 
Denote the moments as
$$ \rho = \int_{\mathbb R^d} f\, dv, \qquad 
\rho u =  \int_{\mathbb R^d} v f\, dv, \qquad E_{\text{Kin}} = \int_{\mathbb R^d} \frac{1}{2}|v|^2\, f\, dv\,. $$ 
Moment equations for (\ref{VP}) are given by 

\begin{align}
\begin{cases}
&\displaystyle \partial_t \rho + \nabla_x \cdot(\rho u)=0\,,  \\[10pt]
&\displaystyle \partial_t (\rho u) + \nabla_x  \cdot \int_{\mathbb R^d}\, v\otimes v f\, dv + E\, \rho = 0\,,    \\[10pt]
&\displaystyle \label{moment2}\partial_t \int_{\mathbb R^d}\, \frac{1}{2} |v|^2 f\, dv +\nabla_x \cdot \int_{\mathbb R^d}\,  \frac{|v|^2}{2}\, v f\, dv 
+ E \cdot (\rho u) =0\,. 
\end{cases}
\end{align}

It is easy to check that the system (\ref{VP}) conserves the total energy defined 
$$E_{\text{Total}} = \frac{1}{2}\int_{\Omega}\int_{\mathbb R^d}\, |v|^2 f\, dv dx + \frac{1}{2}\int_{\Omega}|E|^2\, dx. $$
While there have been previous works to develop schemes that conserve this
total energy, for example see \cite{YD-Cheng1, YD-Cheng}, our strategy
is different, and it serves the purpose for a generic strategy to
develop energy conserving schemes for collisional system, see the next section. 
We also refer to \cite{DG-BP} for Discontinuous Galerkin 
solvers for the Boltzmann-Poisson system. 

In order to construct a scheme that conserves $E_{\text{Total}}$, 
we solve the following Vlasov-Amp\`{e\`{e}}re (VA) system by adopting the Amp\`{e}re's law, instead of solving the Vlasov-Poisson system (\ref{VP}), 
\begin{align}
&\displaystyle \label{VP_f}\partial_t f + v\cdot\nabla_x f - E\cdot\nabla_v f = 0\,, \\[4pt]
&\displaystyle \label{Amp}\partial_t E = \rho u. 
\end{align}
Note that the VA and VP systems are equivalent when the charge solves the continuity equation 
$$ \partial_t \rho + \nabla_x\cdot \rho u =0. $$

{\bf Step 1. }\,  Update $f^{n+1}$ by solving (\ref{VP_f}) explicitly, that is, 
\begin{equation} f^{n+1} = f^{n} - \Delta t\, (v\cdot\nabla_x f^n + E^n\cdot\nabla_v f^n). \end{equation}
Here the transport term $v\cdot\nabla_x f$ is approximated by a 
non-oscillatory high resolution shock-capturing method, and a spectral 
discretization in the velocity space is used for the term $E\cdot\nabla_v f$. 
\\[2pt]

{\bf Step 2. }\, Update $E^{n+1}$ by using a forward Euler solver of (\ref{Amp}), 
\begin{equation}\label{E_Amp1} E^{n+1} = E^n + \Delta t\, (\rho u)^n. \end{equation}

{\bf Step 3. }\, Update the moments at $t^{n+1}$ by solving equations (\ref{moment2}) and using $f^n$. 
\begin{align}
\label{moment3}
\begin{cases}
&\displaystyle \frac{\rho^{n+1}-\rho^n}{\Delta t} + \nabla_x \cdot  \int_{\mathbb R^d} v f^n \, dv =0\,, \\[10pt]
&\displaystyle \frac{(\rho u)^{n+1}-(\rho u)^n}{\Delta t} + \nabla_x \cdot \int_{\mathbb R^d} v\otimes v f^n\, dv + E^n \cdot\rho^{n} = 0\,,  \\[10pt]
&\displaystyle \frac{E_{\text{Kin}}^{n+1}-E_{\text{Kin}}^n}{\Delta t} +\nabla_x \cdot \int_{\mathbb R^d} \frac{|v|^2}{2}\, v f^n\, dv 
+ \frac{E^n + E^{n+1}}{2}\cdot (\rho u)^{n} =0\,. 
\end{cases}
\end{align}

\begin{theorem}
  Let $(\rho, u, E_{kin}, E)_i$ be the numerical approximation of the
  corresponding quantities at grid point $x_i$. If one discretizes the
  divergence term in (\ref{moment3}) by a conservative spatial discretization,
  then one has the conservations of total mass and energy
  \begin{equation}\label{E_Kin_Dis}
     \sum_{i=0}^{N_x}\, \rho_i^{n+1}\,  = \\
 \sum_{i=0}^{N_x}\, \rho_i^{n}, \quad
 \sum_{i=0}^{N_x}\,  \left( (E_{\text{Kin}}^{n+1})_{i}+ \frac{1}{2}(E_i^{n+1})^2\right) 
=  \sum_{i=0}^{N_x}\,  \left( (E_{\text{Kin}}^{n})_{i}+ \frac{1}{2} (E_i^{n})^2\right). 
\end{equation}
\end{theorem}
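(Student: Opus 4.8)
The plan is to establish the two conservation identities in \eqref{E_Kin_Dis} separately, handling mass first and then total energy, the latter being where the specific choice of time discretization in \eqref{moment3} and \eqref{E_Amp1} is used in an essential way. Throughout I assume, as the statement does, that the three spatial divergence terms in \eqref{moment3} are discretized in flux-difference (conservative) form, so that summation over $i=0,\dots,N_x$ with the periodic boundary condition makes them telescope to zero.

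For the mass, I would simply sum the first equation of \eqref{moment3} over all $i$. The only spatial term, $\nabla_x\cdot\int_{\mathbb R^d} v f^n\,dv$, telescopes, leaving $\sum_i \rho_i^{n+1} = \sum_i \rho_i^n$. For the kinetic part of the energy, summing the third equation of \eqref{moment3} over $i$ and telescoping the flux of $\int_{\mathbb R^d}\frac{|v|^2}{2} v f^n\,dv$ yields
\[
\sum_{i=0}^{N_x}(E_{\text{Kin}}^{n+1})_i - \sum_{i=0}^{N_x}(E_{\text{Kin}}^n)_i
= -\,\Delta t \sum_{i=0}^{N_x} \frac{E_i^n + E_i^{n+1}}{2}\cdot (\rho u)_i^n .
\]
For the electric part, I would use the forward-Euler Ampère update \eqref{E_Amp1}, namely $E_i^{n+1} - E_i^n = \Delta t\,(\rho u)_i^n$, together with the elementary identity $\tfrac12\big((E_i^{n+1})^2 - (E_i^{n})^2\big) = \tfrac12 (E_i^{n+1}-E_i^n)\cdot(E_i^{n+1}+E_i^n)$, to obtain
\[
\sum_{i=0}^{N_x}\tfrac12 (E_i^{n+1})^2 - \sum_{i=0}^{N_x}\tfrac12 (E_i^{n})^2
= \frac{\Delta t}{2}\sum_{i=0}^{N_x} (\rho u)_i^n \cdot \big(E_i^{n+1}+E_i^n\big) .
\]
Adding the last two displayed identities, the right-hand sides are exact opposites, which gives the claimed conservation of $\sum_i\big((E_{\text{Kin}}^{n+1})_i + \tfrac12 (E_i^{n+1})^2\big)$.

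The point that does the work here — and the reason the scheme is written this way rather than a genuine obstacle — is the matching of time levels: the kinetic-energy equation in \eqref{moment3} couples to the field through the \emph{time-average} $\tfrac{E^n+E^{n+1}}{2}$ tested against $(\rho u)^n$, and $(\rho u)^n$ is precisely the quantity driving the Ampère step \eqref{E_Amp1}. This makes the discrete work done by the field on the particles cancel the discrete increment of field energy term-by-term in $i$. Had one instead used $E^n$ (or $E^{n+1}$) alone in the energy equation, a residual $\pm\tfrac{\Delta t^2}{2}\sum_i |(\rho u)_i^n|^2$ would survive and energy would be conserved only up to $O(\Delta t)$. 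The only care needed is that all three divergence operators in \eqref{moment3} use the same conservative (telescoping) spatial discretization, which is exactly the hypothesis of the theorem; the boundary terms then drop out for periodic or zero-flux data, just as in the proof of the earlier conservation theorem.
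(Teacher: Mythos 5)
Your proof is correct and follows essentially the same route as the paper's: sum the discrete moment equations so the conservative divergence terms telescope, then use the forward-Euler Amp\`ere update $E^{n+1}_i-E^n_i=\Delta t\,(\rho u)^n_i$ to show the discrete work term $\tfrac{E^n+E^{n+1}}{2}\cdot(\rho u)^n$ exactly cancels the increment of the discrete field energy $\tfrac12(E^2)$. You in fact spell out the cancellation (and the role of the time-averaged field in the kinetic-energy equation) more explicitly than the paper does, but there is no substantive difference in approach.
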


\begin{proof}
  Sum over all $i$ for the spatial discretized system of the first equation
  in (\ref{moment3}) gives
\begin{equation}\label{rho-cons}\Delta x \sum_{i=0}^{N_x}\, \rho_i^{n+1}\,  = \Delta x \sum_{i=0}^{N_x}\, \rho_i^{n}. 
\end{equation}
Also the third equation of (\ref{moment3}), after using (\ref{E_Amp1}), gives
\begin{equation}\label{E_Kin} \Delta x \sum_{i=0}^{N_x} \frac{(E_{\text{Kin}})_i^{n+1} - (E_{\text{Kin}})_i^n}{\Delta t} 
  + \frac{1}{2 \Delta t}\, \sum_{i=0}^{N_x} \sum_{i=0}^{N_x}\left(E_i^{n+1}
  + E_i^n \right)(\rho u)^n_i = 0. \end{equation}
Using (\ref{E_Amp1}), one obtains (\ref{E_Kin_Dis}).
\end{proof}
Since the goal of this section is to preserve the total energy in time,
we will only conduct numerical examples to check the conservation property, and not consider other discretization issues for the system. 
\\[2pt]

{\bf Test III} 

Let the initial data be $$ f(t=0, x, v) = (1+\cos(2x))\, \frac{e^{-|v|^2/2}}{\sqrt{2\pi}}. $$
Periodic boundary condition in space is assumed for $f$, $E$ and $\phi$. 
The initial condition of the electric field $E$ can be obtained from the Poisson equation
$$ -\Delta_x \phi = c(x) - \int_{\mathbb R^d} f\, dv, $$ 
by using a second-order finite-difference Poisson solver and central difference spacial discretization for 
$E = -\nabla_x \phi$. To make the solution unique, we also set the boundary data for $\phi$, 
$$ \phi(x_L) = \phi(x_R) = 0. $$ Set $c(x)=1$. 

Let $x\in[0, \pi]$, $v\in[-2\pi, 2\pi]$, $N_x=200$, $N_v=64$ and $\Delta t 
= \Delta x/20$ in the following test. 
In Figure \ref{VP_Fig}, the first figure shows the density $\rho(x)$ at time $t=0.5$, computed from either solving the moment equations (`ME')
or from the solution $f$ (`Mf'). In the second figure, the electric field $E(x)$ is compared between using the Poisson equation or the Amp\'ere's Law. 
In the third figure, we plot mass as a function of time and compare it between using `ME' and `Mf'. One can see that the two solutions match well in the first three figures. In the fourth figure, the total energy, which is obtained from solving the Vlasov-Poisson (`Mf-Poiss'), Vlasov-Amp\'ere system (`Mf-Amp'), or the 
moment equations and the Amp\'ere's Law (`ME-Amp') respectively.
This verifies the proof shown in (\ref{E_Kin_Dis}) that the numerical total energy is perfectly conserved for `ME-Amp'. 
The other two lines of `Mf-Poiss' and `Mf-Amp', though non-conserved, has a small numerical error (in the order of numerical consistency error). 
However, it is remarkable to observe that the numerical total energy has an $O(10^{-3})$ error for long time which is 
exactly the same order of magnitude of the numerical total error in the simulation of the Vlasov--Poisson--Landau system computed by means of 
operator splitting of a DG scheme for the collisionless Vlasov--Poisson advection coupled to the collisional conservative step in Figure 12 of \cite{zhang2017conservative}, under the same boundary conditions as assumed here. 
 
\begin{figure}[H]
\centering
\includegraphics[width=0.49\linewidth]{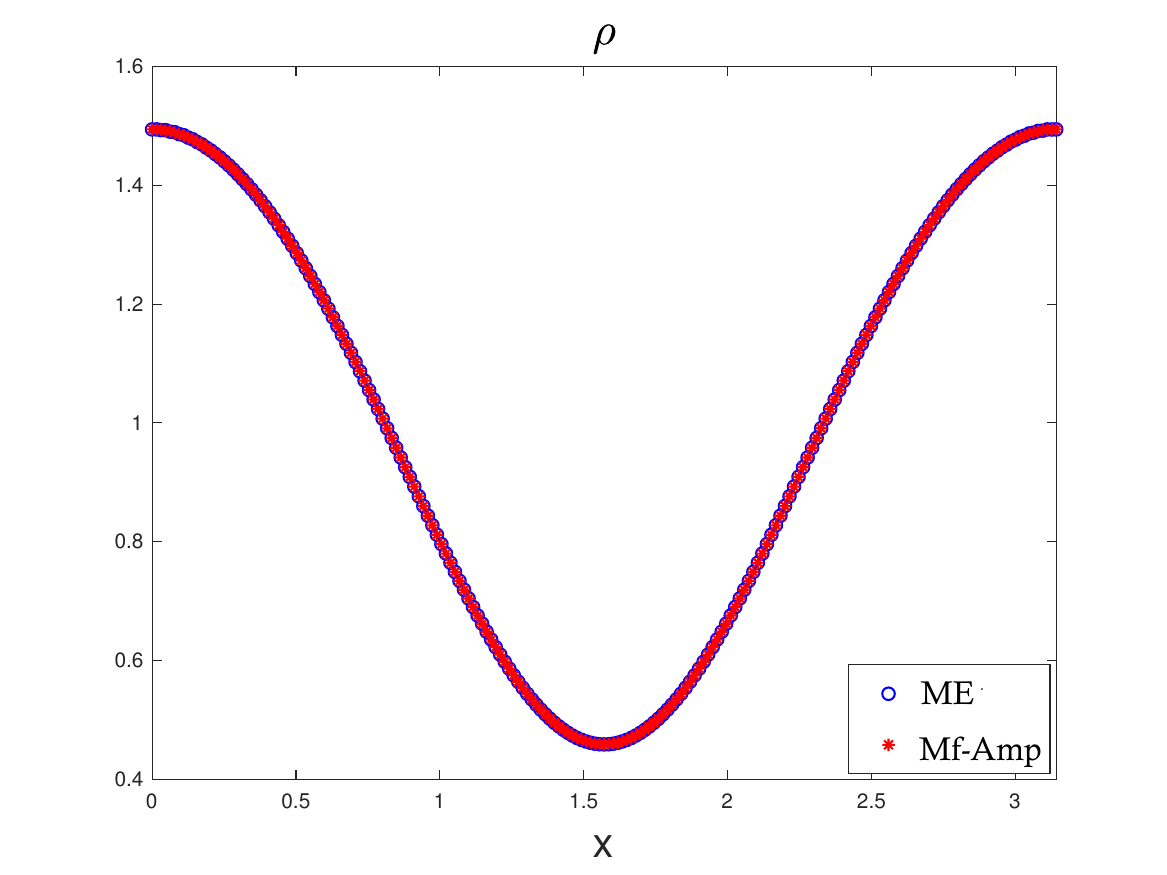}
\centering
\includegraphics[width=0.49\linewidth]{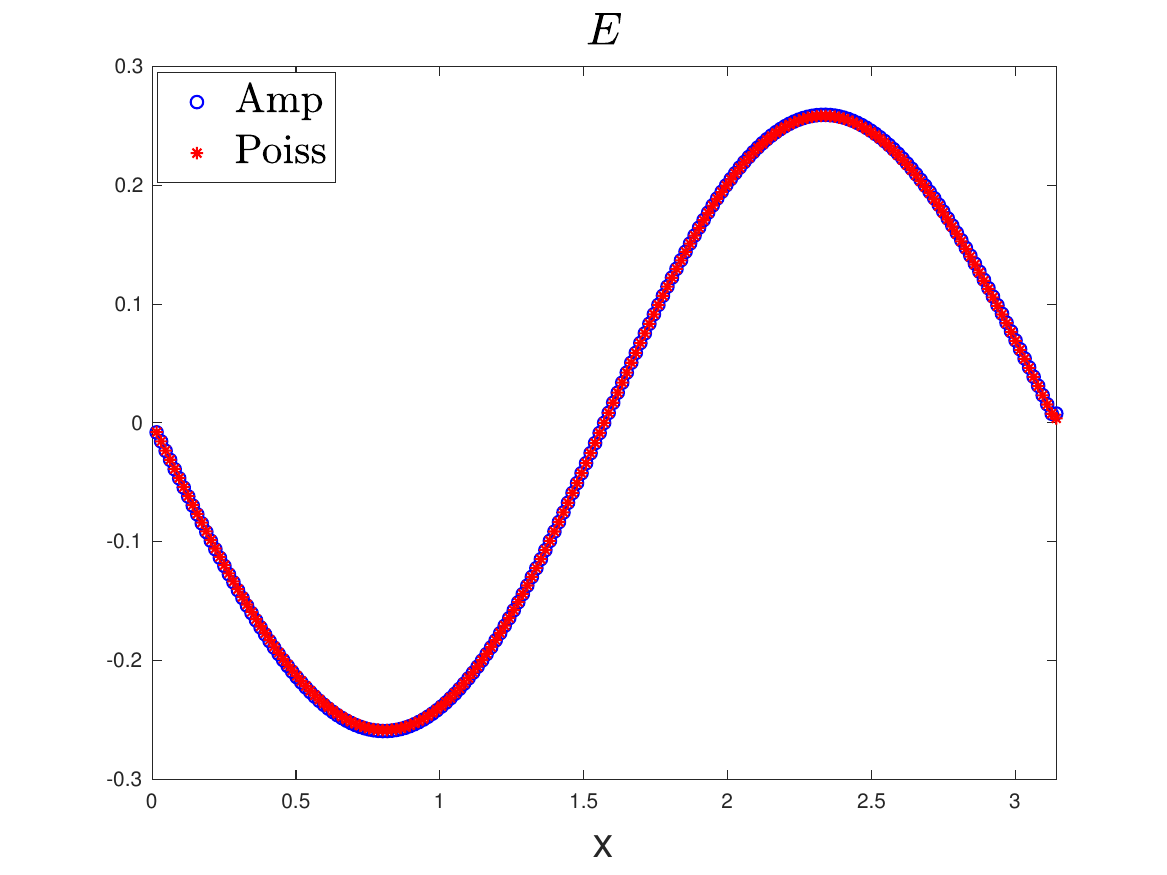}
\centering
\includegraphics[width=0.49\linewidth]{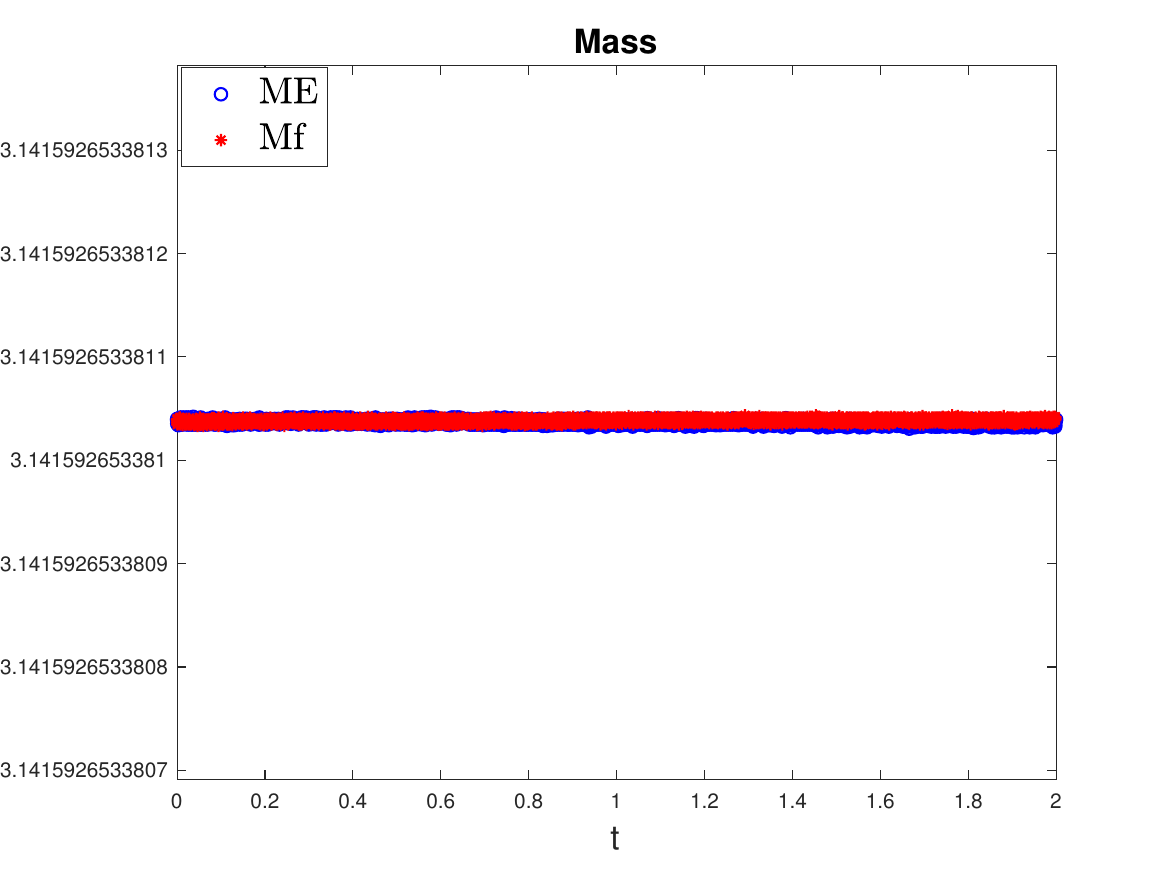}
\centering
\includegraphics[width=0.49\linewidth]{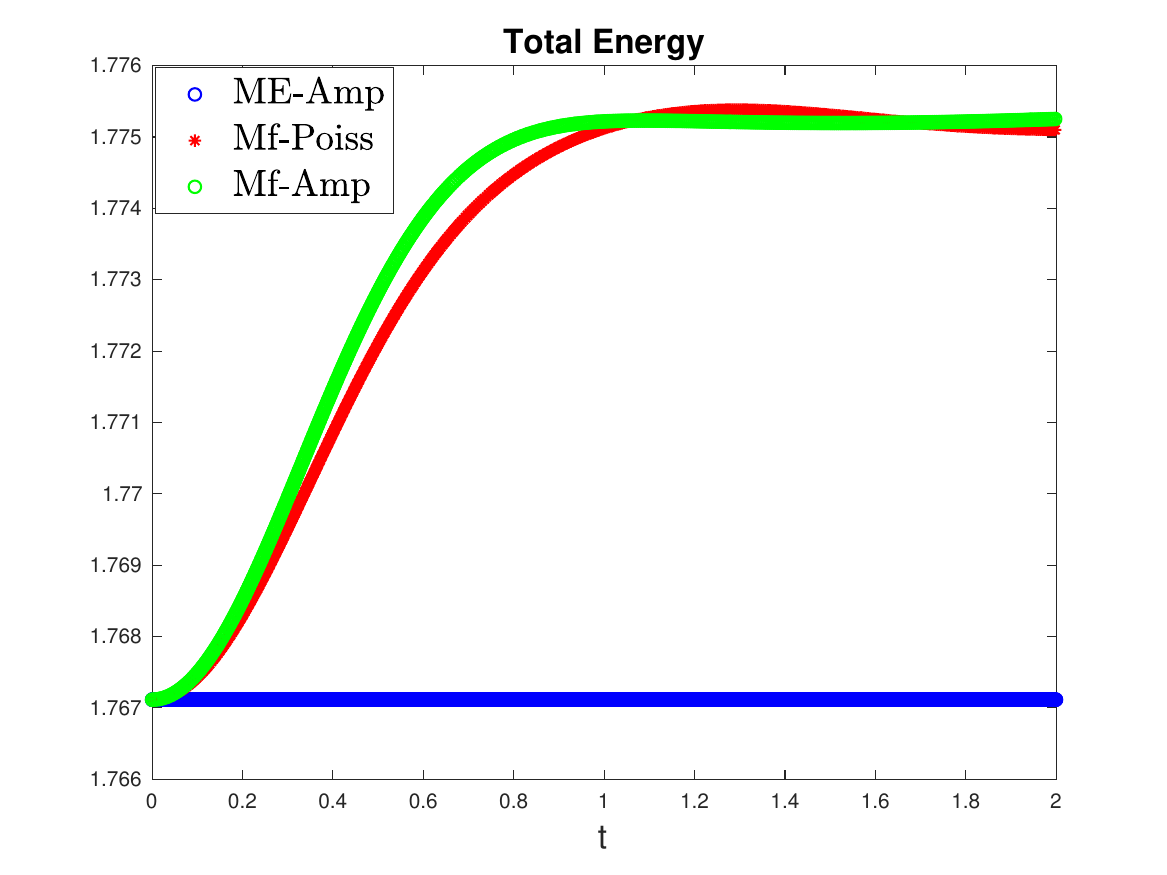}
\caption{Test III. $\rho, E$ at $t=0.5$ in the first row; mass and total energy with respect to time in the second row. }
\label{VP_Fig}
\end{figure}

\subsection{The Vlasov-Amp\'ere-Boltzmann system}

We can easily extend the scheme introduced in section \ref{Sec:VP} to the collisional problems, 
for example the Vlasov-Amp\'ere-Boltzmann system that will be studied in this section. This system models collisional plasma \cite{krall1973principles}.

Consider the Vlasov-Amp\'ere-Boltzmann system, 
\begin{align}
\begin{cases}
&\displaystyle \partial_t f + v\cdot\nabla_x f - E\cdot\nabla_v f = \frac{1}{\varepsilon}\mathcal Q_{\text{B}}(f,f)\,, \\[4pt]
&\displaystyle \label{VA}\partial_t E = \rho u\,. 
\end{cases}
\end{align}

The time-discretized scheme for the moments equations of (\ref{VA}) are the same as the Vlasov-Poisson and is given in (\ref{moment2}). 
With $\rho$, $\rho u$ and $E_{\text{Kin}}$, one can get the temperature $T$ using the relation 
$ E_{\text{Kin}}=\frac{1}{2}\rho\, u^2 + \frac{N_d}{2}\, \rho T$ and thus compute the local equilibrium
\begin{equation}
\label{moment_M} M_{\text{eq}}(x,v) = \frac{\rho(x)}{(2 \pi T(x))^{N_{d}/2}}\, \exp\left( -\frac{(v-u(x))^2}{2 T(x)} \right). 
\end{equation}

To overcome the stiffness of the collision operator in the fluid regime, we simply use the Filbet-Jin
penalty AP schemes here. 

Step 2 and Step 3 given by (\ref{E_Amp1}) and (\ref{moment3}) to update $E$ and the moments quantities are exactly the same as the scheme 
given in section \ref{Sec:VP}. With the collision term in (\ref{VA}), step 1 correspondingly becomes 
$$ \frac{f^{n+1}-f^n}{\Delta t} + v\cdot\nabla_x f^n - E^n \cdot\nabla_v f^n = \frac{\mathcal Q(f^n) - P(f^n)}{\varepsilon}
+ \frac{P(f^{n+1})}{\varepsilon}, $$ 
which gives
$$ f^{n+1} = \frac{\varepsilon}{\varepsilon + \beta \Delta t}\left(f^n - \Delta t\, v\cdot\nabla_x f^n + \Delta t\, E^n \cdot\nabla_v f^n \right) 
+ \Delta t\, \frac{\mathcal Q(f^n) - P(f^n)}{\varepsilon + \beta\Delta t} + \frac{\beta\Delta t}{\varepsilon + \beta\Delta t}\, \mathcal M^{n+1}, $$
with $\mathcal M^{n+1}$ defined through the moments quantities solved from 
(\ref{moment3}). 

In the following numerical experiments we use $\Delta x=\pi/200$, $\Delta t=\Delta x/20$. 

In Figure \ref{TestIII-a}, we show a similar set of figures as Figure \ref{VP_Fig} above. 
The first row shows the numerical solution at output time $t=0.5$, with $\varepsilon=1$. 
The numerical solutions such as $\rho$, $E$ match well no matter whether the Amp\'ere's Law or the Poisson equation is used. 
In this test, moments (mass and total energy) are perfectly conserved if obtained from 
'ME' or 'ME-Amp', as shown in the second row of Figure \ref{TestIII-b}. 
The red line in the third figure indicates that the mass obtained from $f$ is not perfectly conserved but has a spectrally small error. 
The green (`Mf-Amp') and red (`Mf-Poiss') lines in the fourth figure show that the energy, 
if obtained from $f$ coupled with the Amp\'ere's Law or the Poisson equation for $E$, 
is not perfectly conserved but still have a small error. 

For the last test, we will only use the exactly conservative scheme and 
check the penalty method for the Vlasov-Amp\'ere-Boltzmann equation, for the case of small $\varepsilon$. 
Figure \ref{TestIII-b} shows in the first row the numerical solution $\rho$, $E$ at output time $t=0.1$, with $\varepsilon=0.05$. 
In the second row, we show that mass and total energy are perfectly conserved if using the moments equations given by (\ref{moment3}). 

\begin{figure}[H]
\centering
\includegraphics[width=0.49\linewidth]{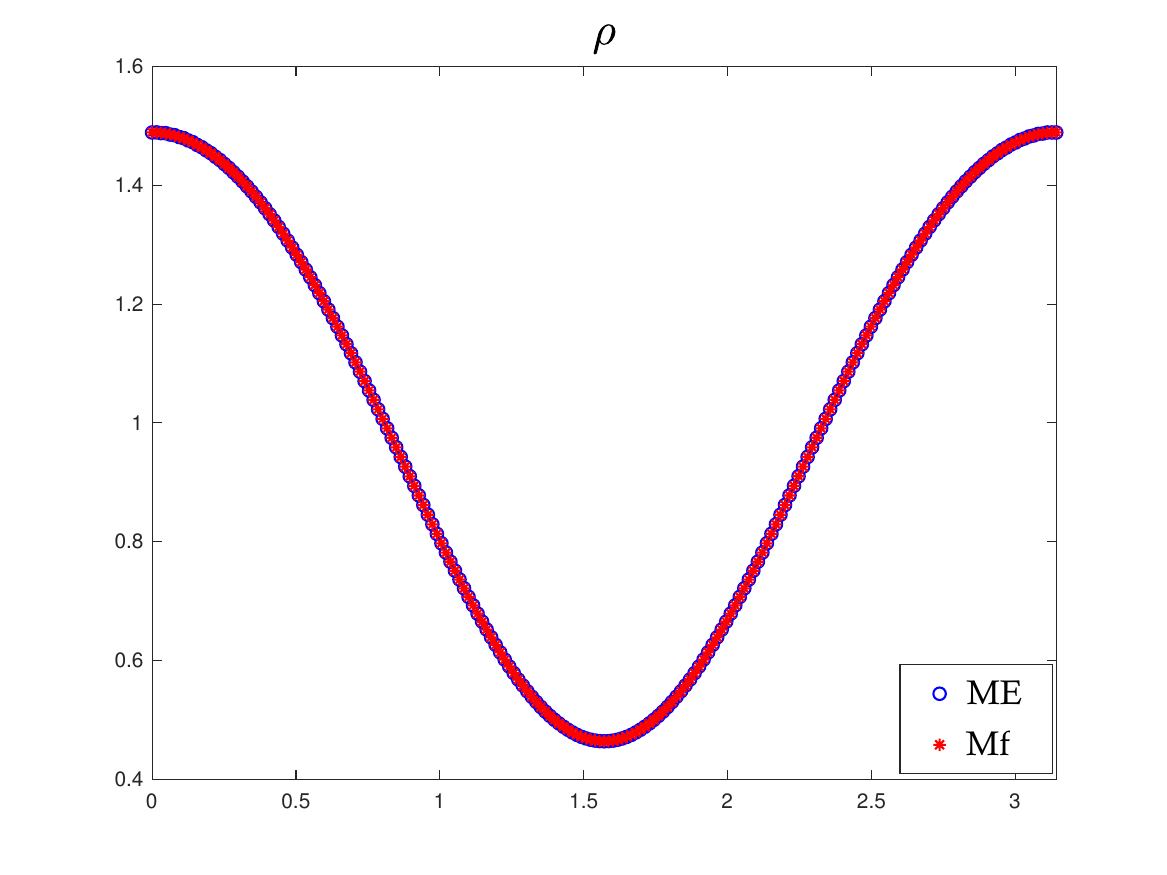}
\centering
\includegraphics[width=0.49\linewidth]{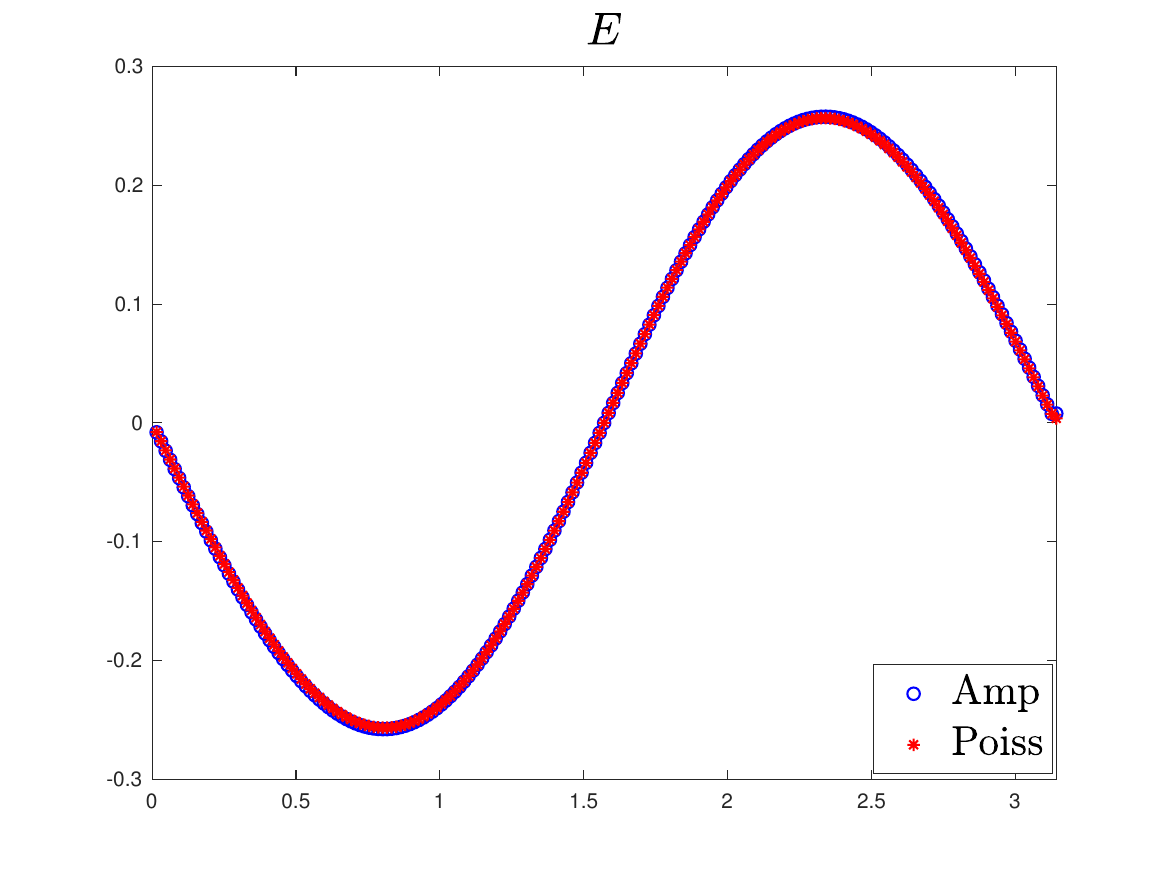}
\centering
\includegraphics[width=0.49\linewidth]{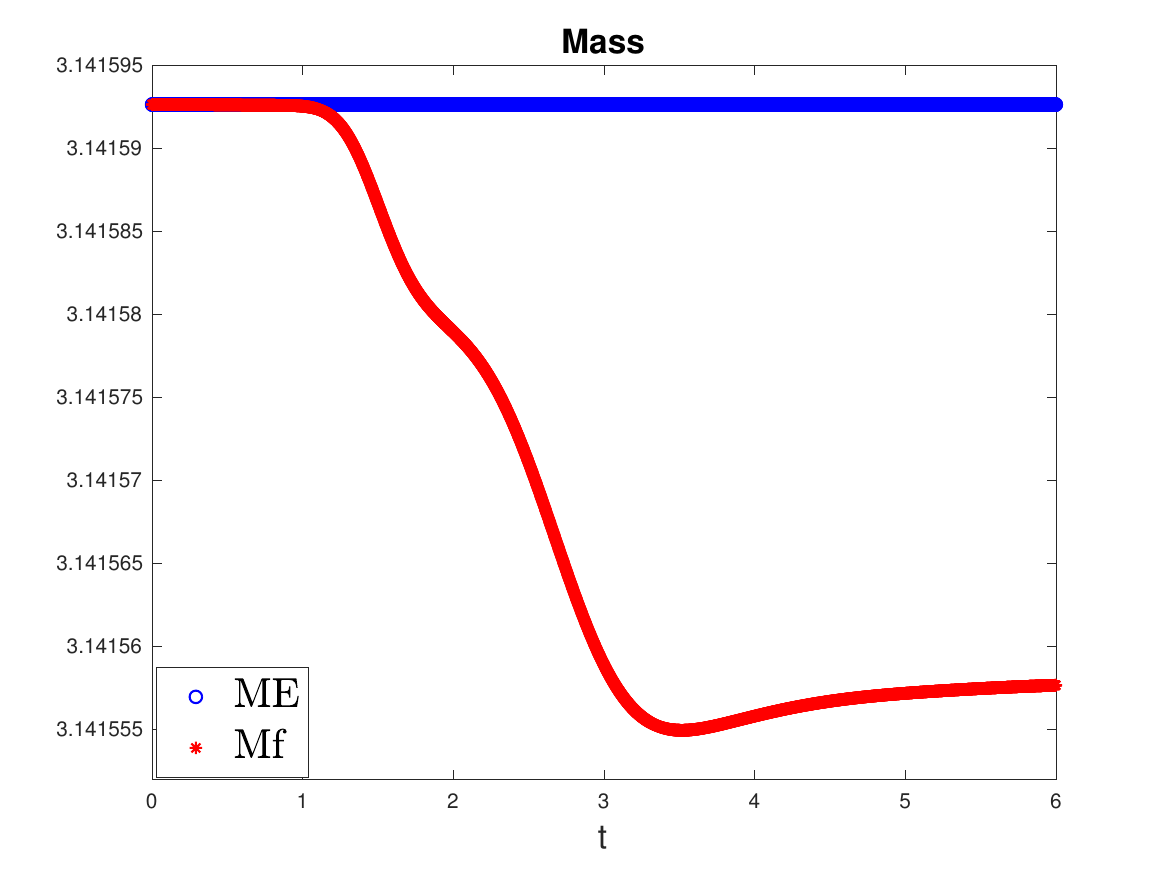}
\centering
\includegraphics[width=0.49\linewidth]{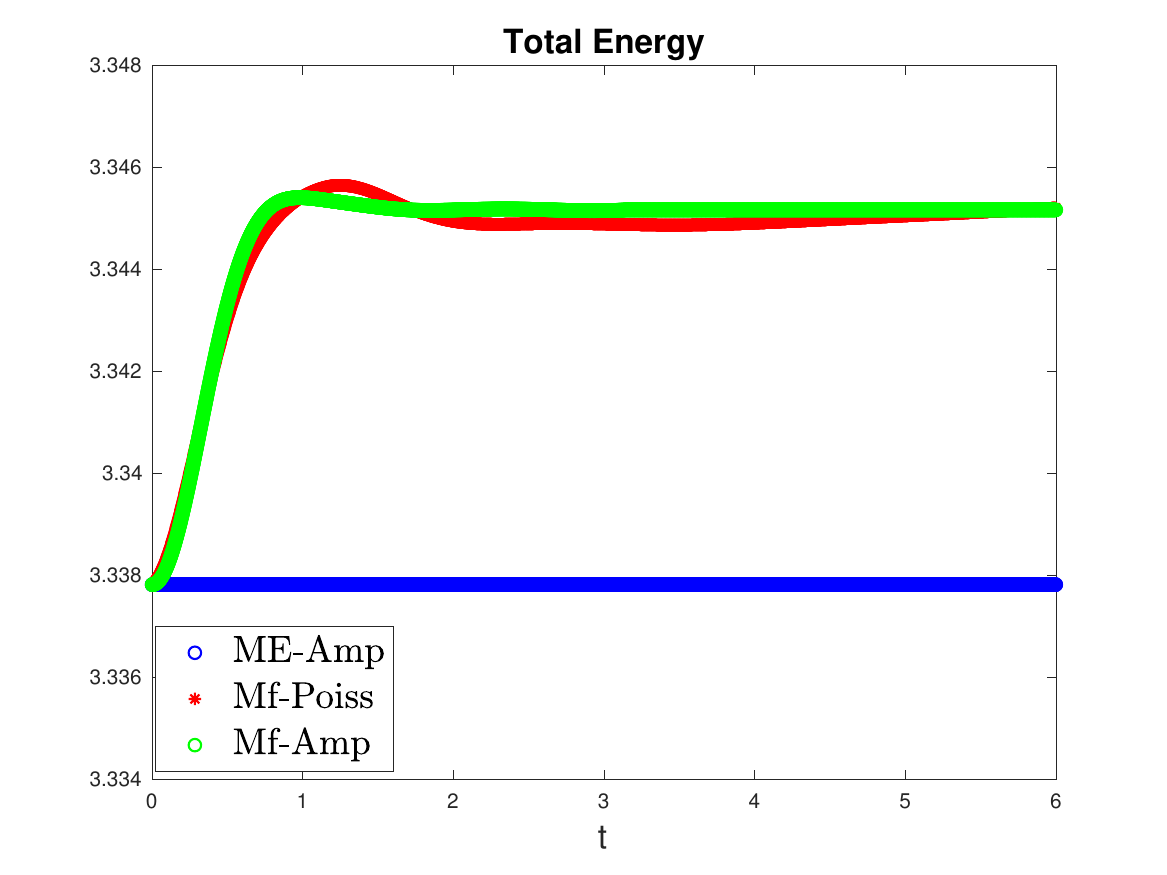}
\caption{$\varepsilon=1$.  Numerical solutions $\rho, E$ at $t=0.5$ in the first row; 
mass and total energy with respect to time in the second row. }
\label{TestIII-a}
\end{figure}

\begin{figure}[H]
\centering
\includegraphics[width=0.49\linewidth]{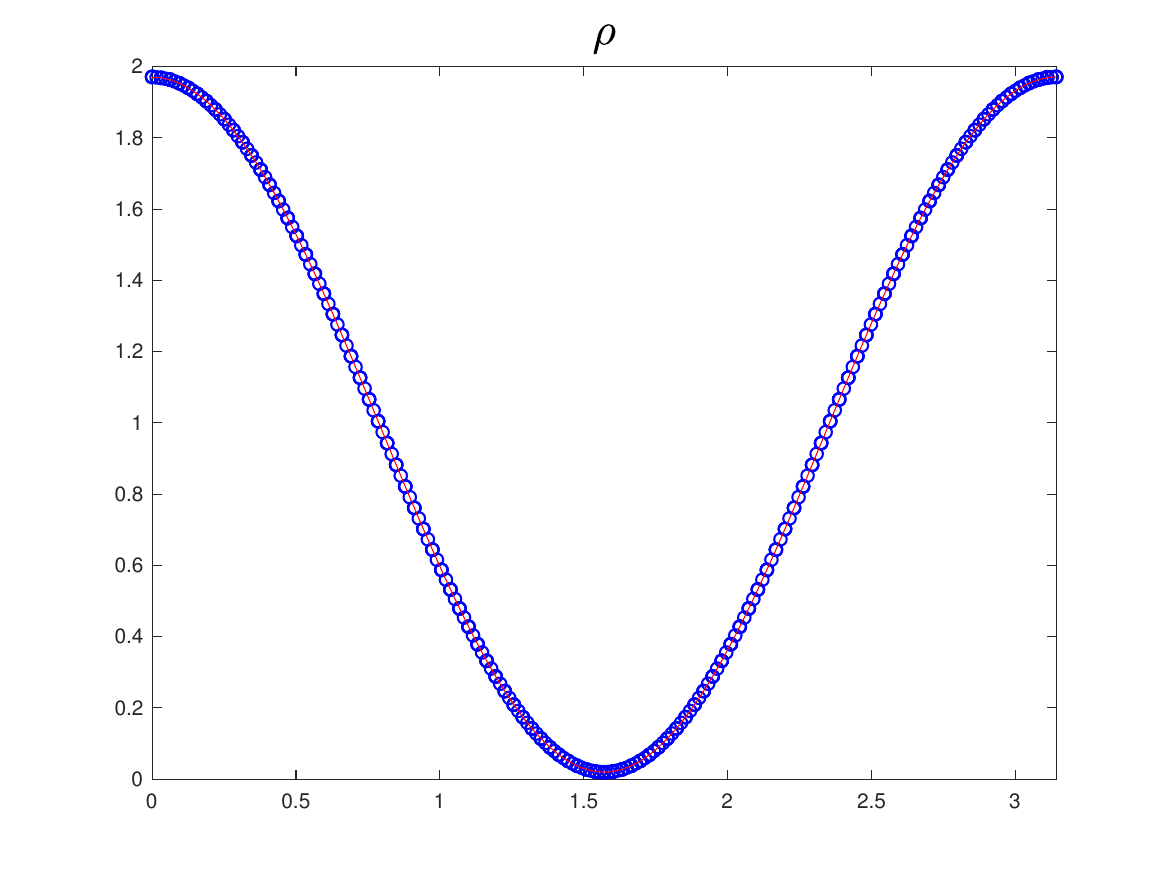}
\centering
\includegraphics[width=0.49\linewidth]{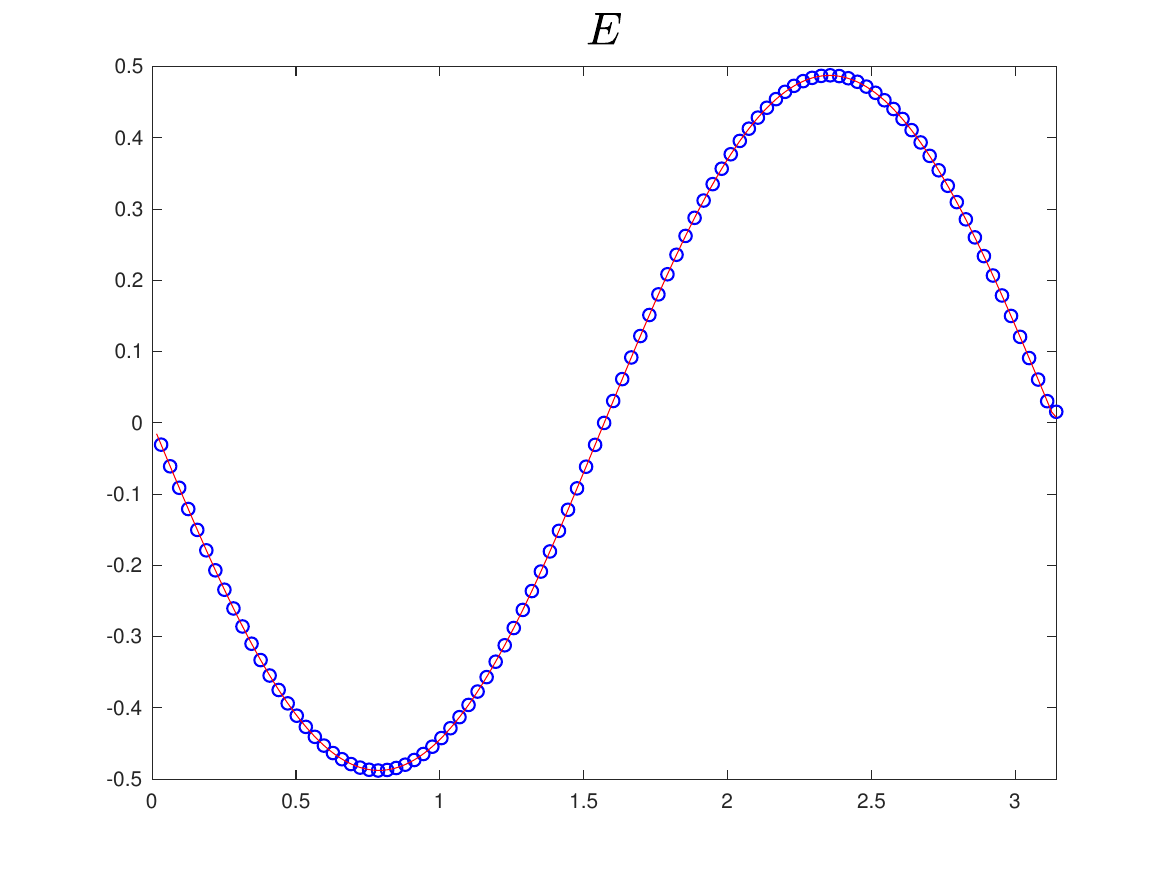}
\centering
\includegraphics[width=0.49\linewidth]{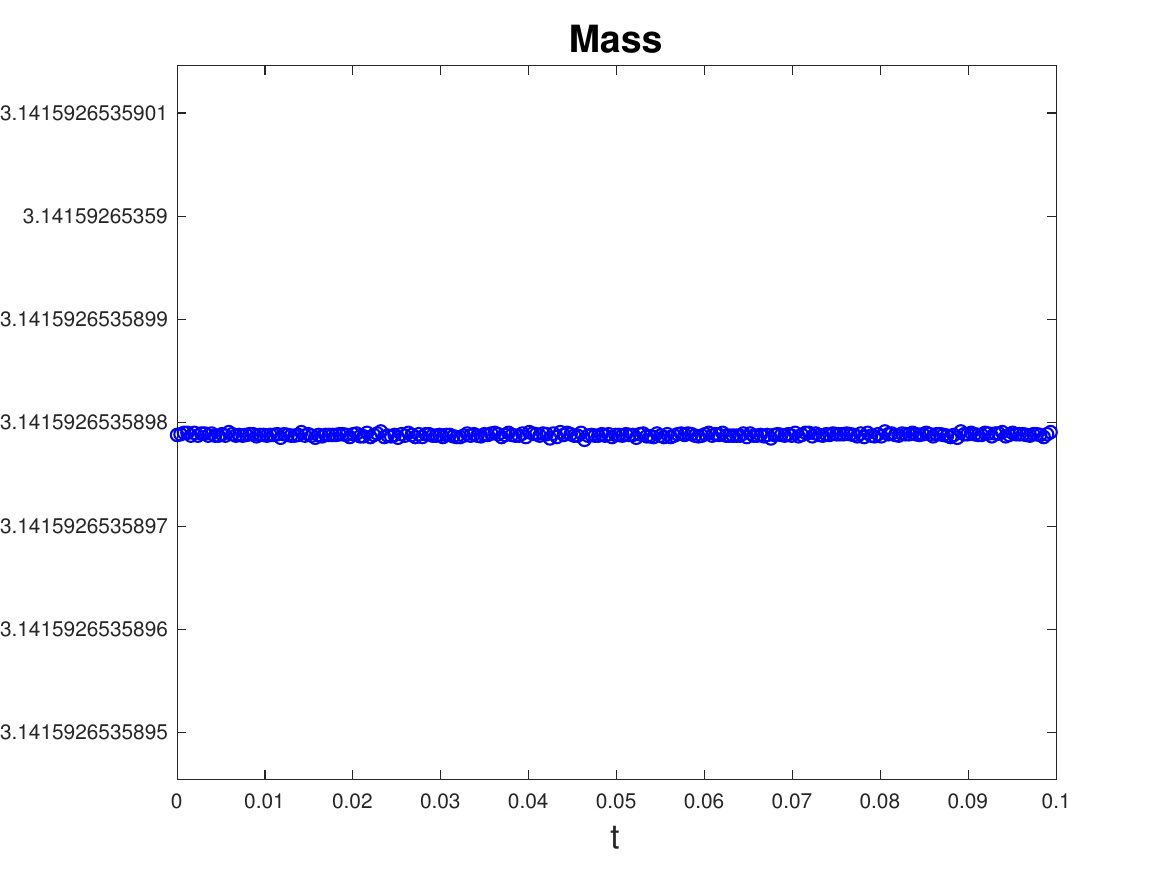}
\centering
\includegraphics[width=0.49\linewidth]{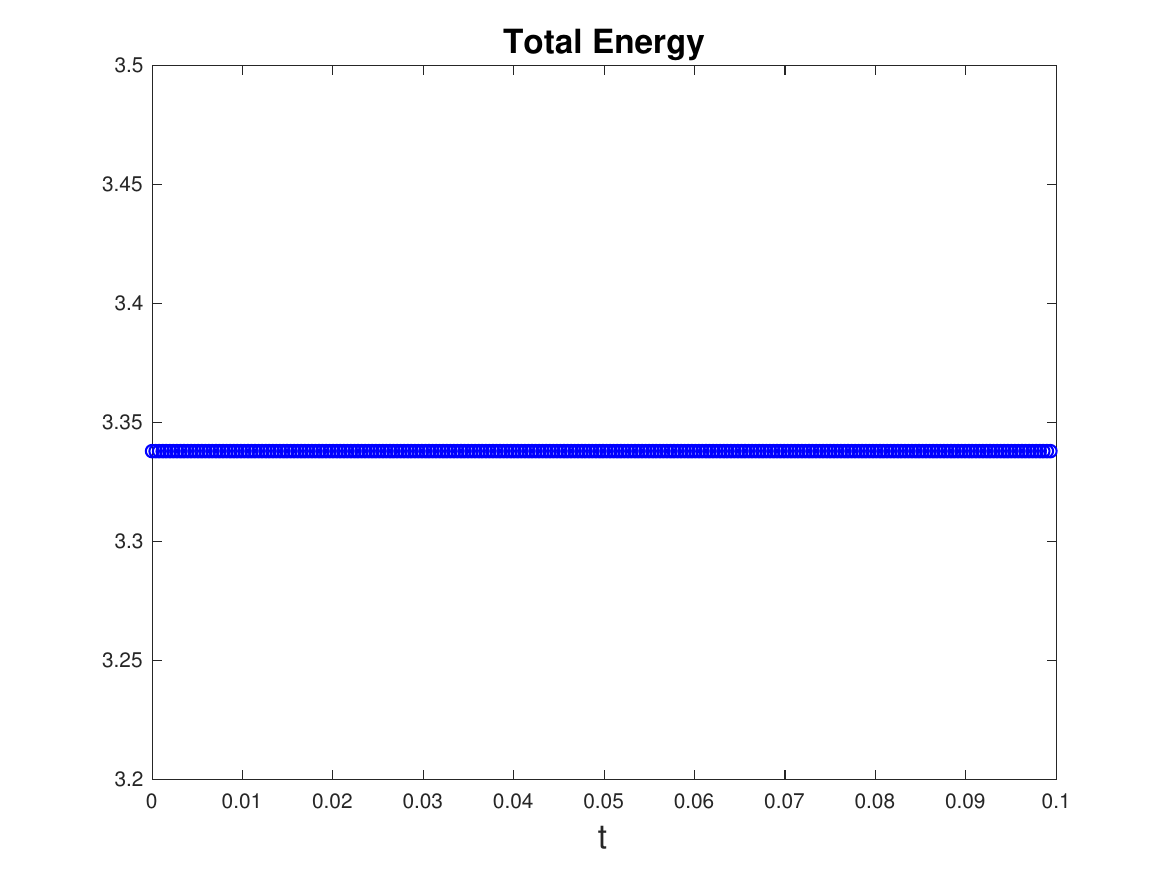}
\caption{$\varepsilon=0.05$. Numerical solutions $\rho, E$ of the Vlasov-Amp\'ere-Boltzmann equation at $t=0.1$ in the first row (blue circle uses $N_x=100$, 
$\Delta t=\Delta x/20$, red line is as reference solution using fine mesh $N_x=200$, $\Delta t= 10^{-4}$); 
mass and total energy with respect to time in the second row. }
\label{TestIII-b}
\end{figure}

\begin{remark} 
The schemes proposed in this section give the desired conservation 
property thanks to the use of moment equations. Here we obtain the moment
system first (so the right hand side vanishes) and then discretize it. 
If one obtains the moments from the discretized $f$ equation, due to the 
non-conservation of the approximate collision operator, the discrete 
moments are not necessarily conserved. This has already been addressed in \cite{JinYan} for a different purpose,
but here it serves the purpose as a generic strategy to devise conservative
schemes for general collision kinetic system. The only price paid is the
extra effort to solve the moment system. 
\end{remark}

\section{Conclusions and future work}
\label{sec:FW}

The micro-macro decomposition based method for multiscale kinetic equations has found many applications 
as an effective method to derive Asymptotic-Preserving schemes that work efficiently in all regimes, including both the kinetic and fluid regimes.  
However, so far it has been developed only for the BGK model. In this paper we extend it to general collisional kinetic equations, including the Boltzmann and the Fokker-Planck Landau equations. 
One of the difficulty in this formulation is the numerical stiff linearized collision operator, which needs to be treated implicitly thus becomes numerically difficult. Our  main idea is to use a relation between the (numerically stiff) linearized collision operator with the nonlinear quadratic ones, the latter's stiffness can be overcome using the BGK penalization method of Filbet and Jin for the Boltzmann, or the linear Fokker-Planck penalization method of Jin and Yan for the Fokker-Planck Landau equations. Such a scheme allows the computation of multiscale collisional kinetic equations efficiently in all regimes, including the fluid regime in which the fluid dynamic behavior can be correctly computed even without numerically resolving the small Knudsen number. It is implicit but can be implemented {\it explicitly}.  
 
This scheme preserves the moments (mass, momentum and energy) {\it exactly} due to the use of the macroscopic system which is naturally in a conservative form. We then utilize this conservation property for more general kinetic equations, using the Vlasov-Amp\`{e}re and Vlasov-Amp\`{e}re-Boltzmann systems as examples. The main idea is to evolve both the kinetic equation for the probability density distribution and the moment system, the later naturally induces a scheme that conserves exactly the moments numerically if they are physically conserved. This recipe is generic and applies to all kinetic equations.
 
Numerical examples demonstrate the conservation properties of our schemes, as well as it robustness in the fluid dynamic and mixed regimes. Notice that the numerical total energy exhibited an $O(10^{-3})$ error persistent for long time that coincides
with the order of magnitude of the numerical total energy error in the implementation of the Vlasov--Poisson--Landau system by 
operator splitting of a DG scheme for the collisionless Vlasov--Poisson advection coupled to the collisional conservative step in \cite{zhang2017conservative}. This observation opens an interesting problem of understanding how to diminish this computational error on obtaining the total energy evolution from the kinetic pdf that solves the Vlasov--Poisson with either Boltzmann or Landau collisional forms, 
by perhaps either imposing a conservation constraint in the kinetic simulation of our proposed scheme or to address operator splitting improvements in the approach used in \cite{zhang2017conservative}. 
 
In the numerical simulation, we use a second order space discretization and a first order IMEX temporal discretization. 
It would be nice to improve the first order time approximation and develop a fully second order scheme, for example, by adopting the method introduced in 
\cite{IMEX-HighOrder}. This will be done in a future work. 
To extend the micro-macro method for multi-dimensional problems also remain to be pursued. 
Here one needs to extend the staggered grid to higher dimension, a task that was investigated for hyperbolic systems of conservative laws \cite{JT} but yet to be studied for kinetic equations. 

\section*{Acknowledgement}
The authors would like to thank both referees for their helpful comments to improve this paper. 

\section*{Appendix: Details of Numerical Implementation}

Details of solving (\ref{U_discrete}) are shown below. In the case of $x\in\mathbb R$, $v\in\mathbb R^2$ ($d=2$), 
$$u_1 = \frac{1}{\rho}\int_{\mathbb R^d} f v_1\, dv, \qquad 
u_2 = \frac{1}{\rho}\int_{\mathbb R^d}  f v_2\, dv. $$
We then have
\begin{align*}
 &\displaystyle\frac{\partial\rho}{\partial t} + \partial_x F_1
 = - \varepsilon\, \partial_x \langle g \rangle, \\[4pt]
&\displaystyle \frac{\partial}{\partial t} (\rho u_1) + \partial_x F_2
 = - \varepsilon\, \partial_x \langle v_1^2\, g \rangle,  \\[4pt]
&\displaystyle \frac{\partial}{\partial t} (\rho u_2) + \partial_x  F_3
= - \varepsilon\, \partial_x \langle v_1 v_2\, g \rangle, \\[4pt]
&\displaystyle \frac{\partial E}{\partial t} + \partial_x F_4 = - \varepsilon\, \partial_x \langle v_1\, \frac{|v|^2}{2} g \rangle,
\end{align*}
where \begin{equation*}\label{F12} F_1 = \langle v_1 M \rangle, \qquad F_2 = \langle v_1^2\, M \rangle, \qquad F_3 = \langle v_1 v_2\, M \rangle, \qquad 
F_4 = \langle v_1\, \frac{|v|^2}{2}M \rangle. 
\end{equation*}
with $M$ associated with $\rho$, $u_1$, $u_2$, $T$ as defined in (\ref{Max}).

The kinetic formulation of the flux splitting (\ref{flux_split}) is given by
\begin{equation*}\label{FS} F_1^{\pm} = \langle v_1^{\pm} M \rangle, \qquad F_2 ^{\pm}= \langle v_1^{\pm}\, v_1 M \rangle, \qquad F_3^{\pm} = 
\langle v_1^{\pm}\, v_2 M \rangle, \qquad  F_4^{\pm} = \langle v_1^{\pm}\, \frac{|v|^2}{2} M \rangle, 
\end{equation*}
with $v_1^{\pm} = (v_1 \pm |v_1|)/2$, $v_2^{\pm} = (v_2 \pm |v_2|)/2$. 

\bibliographystyle{siam}
\bibliography{MM_Boltzmann.bib}

\end{document}